\def\blue{\color{blue}}
\def\red{\color{red}}
\definecolor{mygreen}{rgb}{0.05, 0.576, 0.03}           
\definecolor{mygray}{gray}{0.9}                                 
\definecolor{myred}{rgb}{0.768, 0.09, 0.09}
\theoremstyle{theorem}
\newtheorem{theorem}{Theorem}[section]
\newtheorem*{theorem*}{Theorem} 
\newtheorem{lemma}[theorem]{Lemma}
\newtheorem{corollary}[theorem]{Corollary}
\newtheorem{proposition}[theorem]{Proposition}
\newtheorem{ling}[theorem]{Linguistic convention}
\theoremstyle{definition}
\newtheorem{definition}[theorem]{Definition}
\theoremstyle{remark}
\newtheorem{remark}[theorem]{\bf\em Remark}
\def\N{\mathbb{N}}
\def\R{\mathbb{R}}
\def\P{\mathbb{P}}
\def\E{\mathbb{E}}
\def\PP{\mathscr{P}}
\def\LL{\mathscr{L}}
\def\RR{\mathscr{R}}
\def\TT{\mathscr{T}}
\def\CC{\mathscr{C}}
\def\EE{\mathscr{E}}
\def\VV{\mathscr{V}}
\def\HH{\mathscr{H}}
\renewcommand{\phi}{\varphi}
\renewcommand{\epsilon}{\varepsilon}
\newcommand{\1}{{\text{\Large $\mathfrak 1$}}}	
\newcommand{\var}{\operatorname{var}}
\renewcommand{\limsup}{\varlimsup}
\numberwithin{equation}{section}
\def\bomega{{\boldsymbol\omega}}
\def\II{\mathscr I}
\def\JJ{\mathscr J}
\newcommand{\I}{{\text{\Large $\mathfrak 1$}}}	
\def\ee{\text{\footnotesize $\mathcal E$}}                                        
\def\eee{\text{\tiny $\mathcal E$}}
\def\ran{{\sf R}}
\def\dom{{\sf D}}
\newcommand{\jso}[1]{\stackrel{#1}{\cong}}
\def\Y{\mathbb{Y}}
\definecolor{tempcol}{rgb}{0.10, 0.58, 0.99}
\def\Deg{\operatorname{Deg}}
\def\xor{\text{\footnotesize$\triangle$}}
\def\Rado{\text{\textsf{\textbf{R}}}}
\newcommand{\tkmid}[2]{\begin{center}\begin{minipage}{#1} \color{red} \fbox{\small \tt #2} \end{minipage}\end{center}}
\def\pmax{{\widehat p}}
\def\SSS{\mathsf S}
\def\SSSone{\mathsf S_{\text{\rm I}}}
\def\SSStwo{\mathsf S_{\text{\rm II}}}
\def\TTT{\mathsf T}
\def\ZZ{\mathscr Z}
\begin{document}


\title[Random graph isomorphisms]{A combinatorial approach to
phase transitions in random graph isomorphism problems}
\author{Dimitris Diamantidis} 
\author{Takis Konstantopoulos} 
\author{Linglong Yuan} 
\address{Math.\ Dept., Univ.\ of Liverpool, UK}
\email{Dimitris.Diamantidis@liverpool.ac.uk}
\email{takiskonst@gmail.com}
\email{yuanll@liverpool.ac.uk}

\subjclass[2020]{60C05, 05C60, 60F99, 05A19}
\keywords{Random graph, edge graph, subgraph isomorphism, phase transition} 


\begin{abstract}
We consider two independent Erd\H{o}s-R\'enyi random graphs,
with possibly different parameters, and study
two isomorphism problems, a graph embedding problem and a common subgraph problem. 
Under certain conditions on the graph parameters we show a sharp
asymptotic phase transition as the graph sizes tend to infinity.
This extends known results for the case of uniform 
Erd\H{o}s-R\'enyi random graphs. 
Our approach is primarily combinatorial, naturally leading to several 
related problems for further exploration.
\end{abstract}

\maketitle 

\hypersetup{linkcolor=Brown}
\tableofcontents

\section{\bfseries\scshape  Introduction}
Let $G(V,p)$ denote (the law of) a random undirected graph on the vertex set
$V$ in which an edge is present with probability $p$, 
independently from edge to edge.
When $V$ has finite size $n$ the random graph
is the well-known Erd\H{o}s-R\'enyi $G(n,p)$ graph,
but the term makes sense even when $V$ is infinite.
In particular, we let $G(\infty, p)$ be the law of this graph on a countably
infinite set of vertices. 

In a seminal paper, Erd\H{o}s and R\'enyi \cite{ER63} showed that
$G(\infty,1/2)$ is unique up to automorphisms.
More specifically, there is a (deterministic) graph $\Rado$,
unique up to isomorphisms, on a countable set of vertices and a
random bijection from the set of vertices of $G(\infty,1/2)$ 
onto the set of vertices of $\Rado$ that preserves edges. 

The graph $\Rado$ has a number of remarkable properties, as first shown by Rado
\cite{R64}. We refer to the papers of Cameron \cite{CAM97,CAM01}
for a survey of properties of $\Rado$ which
is now known by either of the following names:
{\em Rado Graph}, 
{\em Universal Graph} or [{\em sic}] {\em The Random Graph}.

$\Rado$ can be constructed in many ways. Here is one. 
Consider first von Neumann's representation of natural numbers
as finite ordinals. To the dismay of a probabilist, we use the
letter $\bomega$ for the set of natural numbers $0,1,2,\ldots$
with their standard order type.
This means that $0:=\varnothing$ and, recursively, $n := \{0,1,\ldots, n-1\}$; 
e.g., $1=\{\varnothing\}$, $2=\{\varnothing, \{\varnothing\}\}$.
Then each natural number is both an element and a (finite) 
subset of natural numbers.
Next, let $V_\bomega=\{a,b,\ldots\}$ be the collection of 
all finite subsets of the natural numbers, a countable
set that strictly includes $\bomega$.
This $V_\bomega=\{a,b,\ldots\}$ is a model of the von Neumann universe,
the class of hereditarily finite sets \cite{ENDERTON}.
For $a, b \in V_\bomega$, write $a \sim b$ if 
$a \in b$ or $b \in a$.
By the foundation (or regularity) axiom of ZFC \cite{ENDERTON} 
only one of these memberships can be true.
Considering $\sim$ as a set of edges, the graph $(V_\bomega, \sim)$ is
isomorphic to $\Rado$.

A second representation of $\Rado$ is obtained by taking $\bomega$ to be a set
of vertices instead of $V_\bomega$.
To define edges, we first represent each natural number
$n$ in binary 
by writing $n=\sum_{m \in B(n)} 2^m$.
Write $m \sim' n$ 
if $m \in B(n)$ or $n \in B(m)$.
Considering $\sim'$ as a set of edges, we have that $(\bomega, \sim')$ is
isomorphic to $(V_\bomega, \sim)$ and hence to $\Rado$.

The isomorphism between the two graphs, 
$(V_\bomega, \sim)$ and $(\bomega, \sim')$,
is via a function 
$A : V_\bomega \to \bomega$, devised by Ackermann \cite{ACK37}
via the recursive formula
$A(a) = \sum_{b \in a} 2^{A(b)}$, for $a \in V_\bomega$.
It is easy to see that (i) $A$ is a bijection and (ii) $A$ preserves edges.
If we view $B$ as a function $\bomega \to V_\bomega$ that assigns the
set $B(n)$ to the natural number $n$ then we can see that $B$ is the inverse
of $A$.

Some properties of $\Rado$ are as follows.

First, if we partition the set $V(\Rado)$ of vertices of $\Rado$ 
into finitely many sets then 
$\Rado$ is isomorphic to the induced subgraph on one of these parts.
For example, considering $\bomega$ as a subset of $V_\bomega$,
we have that the $V_\bomega$ graph is isomorphic to the induced subgraph
on $V_\bomega \setminus \bomega$ (but not to the induced subgraph
on $\bomega$ because this subgraph is complete: for every two distinct natural
numbers $m, n$ either $m \in n$ or $n \in m$).

Second, every finite or countably infinite graph can be embedded
as an induced subgraph of $\Rado$.
Thus, $\Rado$ contains every possible graph!

The key property of $\Rado=(V(\Rado), E(\Rado))$ is the following. Given finitely
many distinct vertices $u_1, \ldots, u_m, v_1,\ldots, v_n \in V(\Rado)$,
there is $z \in E(\Rado)$ such that $z$ is adjacent to $u_1, \ldots, u_m$
and nonadjacent to $v_1, \ldots, v_n$.
In fact, any graph satisfying this key property is isomorphic to $\Rado$;
see \cite{CAM97}.
It is easy to see that for any countably infinite sets $U_1, U_2$
and any $0<p_1, p_2<1$, the graphs $G(U_i, p_i)$ satisfy this key property and  
so, if $X_1, X_2$ are two random graphs, independent or not,
with laws $G(U_1, p_1)$, $G(U_2, p_2)$, respectively,
then $P(X_1 \cong X_2 \cong \Rado)=1$. The symbol $\cong$ stands for
``being isomorphic to''; see Def.\ \ref{isodef}.

As noted by Chatterjee and Diaconis \cite{ChD23}, this poses a conundrum.
On one hand, if $X, Y$ are independent $G(\infty,1/2)$ random graphs
then $X \cong Y$ a.s.
On the other hand, if  $X, Y$ are independent $G(n,1/2)$ random graphs
then the probability that they 
are isomorphic is astronomically small as $n \to \infty$, 
namely, at most $n!/2^{\binom{n}{2}} \le e^{-c n^2}$, for some 
positive constant $c$.
To shed light to this,
they asked the question whether a $G(m,1/2)$
graph can be embedded into an independent $G(n,1/2)$ graph when $m$ is smaller
than $n$ and found that there is a critical value of $m$,
roughly of order $2\log_2 n$ (but the exact value of it
is very important), such that the probability that the
small graph can be embedded into the larger one tends to $1$ or $0$
depending on whether $m$ is  below or above the threshold.
For a precise statement of this result see \cite[Thm.\ 1.2]{ChD23}
or the more general Theorem \ref{thm1} below.

In this paper, we consider two problems that we call
{``graph embedding'' and ``common subgraph'' problems.}
The first one refers to the question whether a $G(m,p)$ graph
can appear as an {\em induced} subgraph of an independent $G(n,q)$ graph,
when $m \le n$.
The second one refers to the question whether two independent 
$G(n,p)$, $G(n,q)$ graphs contain a common {\em induced} subgraph
of size $m$. 
The terminology is not standard. For example, the ``graph embedding'' 
problem is often referred to as ``subgraph isomorphism'' and is a classical
problem in computer science; see, e.g., Ullmann \cite{ULL76}.
We note that both problems have been treated by Chatterjee and Diaconis
\cite{ChD23} in the uniform distribution case (random graph of size $n$
refers to a uniform probability measure on the set of all graphs on
$n$ vertices). Deviation from the non-uniform case is a harder problem and
it is what we are interested in in this paper.
The main theorems are  Theorem \ref{thm1} (graph embedding)
and Theorem \ref{thm2} (common subgraph).
In both cases we show the existence of a phase transition
occurring at two integers that differ by at most 2.
This implies that we have what is sometimes known as a
{\em two-point concentration} phenomenon. 
This means that phase transition occurs at a ``boundary''
defined by two integer sequences, $m_-(n)$, $m_+(n)$, such
that, eventually, $m_+(n) - m_-(n)$ is either $1$ or $2$.
We show that there is flexibility and that the set of integers $n$
such that $m_+(n) - m_-(n) > 1$ can be made as small as possible
by choosing a certain sequence that tends to infinity as slowly as possible
(this is the sequence $C_n$ appearing in our two main theorems).
However, the set of $n$ such that $m_+(n) - m_-(n) > 1$ can 
never become empty.


%

Phase transition phenomena in computational problems are closely related to
the complexity of solving these problems and provide significant insights into
the difficulty of certain instances compared to others. In the context of
computational complexity, a phase transition refers to the abrupt change in the
solvability or structure of a problem as some parameter is varied. This concept
is often studied in NP-complete problems like k-SAT \cite{COP2016} or
graph coloring \cite{AN2005}, where, as parameters                                
(e.g., the ratio of constraints to variables) cross a critical
threshold, the likelihood of finding a solution shifts dramatically—from being
almost always solvable to being almost always unsolvable.

We first heard about these problems during a talk given by
Persi Diaconis \cite{OWPS}. Subsequently, Chatterjee and Diaconis,
provided a phase transition for the case of uniform graphs in \cite{ChD23},
a paper that provided further motivation for us.
While pursuing open problems stated in \cite{ChD23},
two papers appeared on the ArXiV:
Lenoir \cite{L2024}, who studied phase transitions for
uniform hypergraphs, and 
Surya, Warnke, and Zhu \cite{SWZ2023}, who studied
the phase transition phenomena of interest to us. 
The second paper deals with precisely the same problem as ours in a more general
context (that is, without restrictions on the parameters $p$ and $q$ of the
random graphs.)
In view of this, our paper can only be seen as dealing with the problem under
restrictions on the parameters (see Theorems \ref{thm1} and \ref{thm2} below).
As such, the only new thing in our paper is that it gives a different, essentially
combinatorial proof, for a special case,
exhausting the limits of the second moment method.
On the other hand, Surya, Warnke, and Zhu use a clever probabilistic idea
in order to deal with the general case.

Both problems are also of interest in several applications as well.
In artificial intelligence research,
people are interested in discovering whether a pattern occurs
inside a large target graph. McCreesh {\em et al.} \cite{AIpaper18}
have studied this numerically and predicted the 
existence of a phase transition. 
In bioinformatics research, graphs represent biological networks at
the molecular or higher (protein or even species) level.
Again, an important question is that of locating a specific
pattern in a network. An algorithm for the problem, is proposed by
Bonnici {\em et al.} \cite{BIOpaper13}.
The problems are also of interest in theoretical computer science
since subraph isomorphism problems are related to constraint
satisfaction problems.

\section{\bfseries\scshape  Main results}
Before stating the results, we introduce some notation.
Let $V$ be a set and denote by $\PP(V)$ the collection of all its subsets,
and by $\PP_2(V)$ the collection of subsets of $V$ of size $2$.
Any $E \subset \PP_2(V)$ defines a graph $\Gamma=(V, E)$.
Equivalently, we can think of 
$\Gamma$ as being a collection of $\{0,1\}$-valued numbers
\[
\text{$X(e)$, where $e$ ranges over $\PP_2(V)$,}
\]
because $E$ can be identified with the set 
\[
\{e\in \PP_2(V):\, X(e)=1\}.
\]
An element $e$ of $\PP_2(V)$ may be called an edge of $V$
(having in mind the complete graph).
The two elements of $e$ are called {\em endpoints} of $e$.
We can also think of $E$ as a binary relation on $V$ that a priori possesses
no properties other than symmetry.

If $A \subset V$ then $\Gamma^A$ will denote the 
the {\em induced subgraph} (or, simply, {\em restriction})  of $\Gamma$ on $A$
whose edges are all edges of $\Gamma$ with endpoints in $A$.

Let $V$, $V'$ be two sets. 
Any function 
\[
f: V \to V'
\]
defines a function 
\[
\widetilde f: \PP(V) \to \PP(V')
\]
by mapping any $e \in \PP(V)$
to the set $\widetilde f(e) = \{f(x): x \in e\}$.
We will omit the tilde over $f$ when no ambiguity arises.
If $\Gamma=(V, E)$ is a graph and $f: V \to V'$ a function
then, letting $f(E) := \{f(e): e \in E\}$, the
object $f(\Gamma)= (f(V), f(E))$ is a graph provided that $f(e)$ has
cardinality 2 for all $e \in E$.

Since the notion of isomorphism is central to this paper,
we recall its standard definition below.
\begin{definition}[isomorphism]
\label{isodef}
Let $\Gamma = (V, E)$, $\Gamma' = (V',E')$ be two graphs.
We say that $f: V \to V'$ is an {\em isomorphism} if $f$ is a bijection
and if $e \in E \iff f(e) \in E'$.
In this case, $V'=f(V)$, $E'=f(E)$, and $\Gamma'=f(\Gamma)$.
The statement that there exists an isomorphism $f$ between $\Gamma$ and
$\Gamma'$ is abbreviated as $\Gamma \cong \Gamma'$.
\end{definition}

An isomorphism between $\Gamma$ and itself is called automorphism.
The set of automorphisms forms a group and the larger its size the more
symmetric $\Gamma$ is. For example the empty graph (no edges)
and the full graph are fully symmetric. The set of automorphisms of the
$\Rado$ is quite remarkable and has been studied by Truss \cite{TR85}.


In the first part of this paper we deal with an embeddability problem.
We give the definition of the term below.
\begin{definition}[embeddability]
We say that $\Gamma$ is {\em embeddable} in $\Gamma'$ if $\Gamma \cong 
\Gamma''$ for some induced subgraph $\Gamma''$ of $\Gamma'$.
\end{definition}

Our first concern is whether an Erd\H{o}s-R\'enyi random graph
can be embedded in a bigger independent random graph.
To this end, we have the following theorem.

\begin{theorem}[phase transition for the graph embedding problem]
\label{thm1}
Let $X_m, Y_n$ be two independent random graphs with laws 
$G(m, p), G(n, \frac{1}{2})$, respectively, where $0<p<1$.
Let $m(n)$ be a sequence of positive integers such that $m(n) \to \infty$.
Then
\begin{numcases}
{\lim_{n \to \infty} \P(X_{m(n)} \text{ is embeddable in } Y_n)=}
1, 
& $m(n) = \left\lfloor 2 \log_2 n + 1 - \frac{C_n}{\log n} \right\rfloor=: m_-(n)$
\tag{I}
\\
0, 
& $m(n) = \left\lceil 2 \log_2 n + 1 + \frac{C_n}{\log n} \right\rceil=: m_+(n)$ 
\tag{II}
\end{numcases}
where $C_n \to \infty$ and $C_n/\log n \to 0$.
\end{theorem}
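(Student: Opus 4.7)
I would proceed by first- and second-moment estimates on the count $N$ of injective maps $f:[m]\to[n]$ that satisfy $X_m(\{i,j\})=Y_n(\{f(i),f(j)\})$ for every $\{i,j\}\in\binom{[m]}{2}$. Because each $Y_n(e)\sim\mathrm{Bern}(\tfrac12)$ is independent of $X_m$, each edge equality holds with probability $\tfrac12$ irrespective of $p$, and the $\binom{m}{2}$ equalities are mutually independent; hence $\E[N]=(n)_m\,2^{-\binom{m}{2}}$. Writing $m=2\log_2 n+1+\delta$, a direct expansion gives $\log_2\E[N]=-\delta\log_2 n+O(1)$. In Case~II, $\delta\ge C_n/\log n$ forces $\E[N]\to 0$, and Markov's inequality yields $\P(N\ge 1)\le\E[N]\to 0$; in Case~I, $\delta\le -C_n/\log n$ gives $\E[N]\to\infty$, the necessary preliminary for the second-moment method.

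For Case~I, the Paley--Zygmund inequality $\P(N\ge 1)\ge\E[N]^2/\E[N^2]$ reduces the proof to $\E[N^2]=(1+o(1))\E[N]^2$. I would decompose the double sum $\E[N^2]=\sum_{f,g}\P(f,g\ \text{both embed})$ by the overlap $k:=|f([m])\cap g([m])|$ and the induced partial bijection $\sigma:=g^{-1}\circ f|_U$, where $U:=f^{-1}(f([m])\cap g([m]))$ and $W:=g^{-1}(f([m])\cap g([m]))$. Separating the $Y_n$-edges constrained only by $f$, only by $g$, or by both, and using independence of $Y_n$ from $X_m$, one obtains
\[
\P(f,g\ \text{both embed}) = 2^{-(2\binom{m}{2}-\binom{k}{2})}\cdot\P\bigl(X_m(e)=X_m(\sigma(e))\ \text{for all}\ e\in\tbinom{U}{2}\bigr).
\]
When $U\cap W=\varnothing$ the consistency probability factors edgewise to $(p^2+(1-p)^2)^{\binom{k}{2}}$; in general it equals $\prod_C(p^{|C|}+(1-p)^{|C|})$ over the connected components $C$ of the edge-identification graph on $\binom{U}{2}\cup\binom{W}{2}$ generated by $e\mapsto\sigma(e)$.

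Setting $\alpha:=2(p^2+(1-p)^2)\in[1,2)$, I would split $\E[N^2]/\E[N]^2=\sum_k T_k$ into the disjoint contribution (possible only for $k\le m/2$) and a non-disjoint remainder. The disjoint part satisfies
\[
T_k^{\mathrm{disj}}\;\asymp\;\frac{(m^2/n)^k}{k!}\,\alpha^{\binom{k}{2}};
\]
in particular $T_0=1+o(1)$, and for $1\le k\le m/2$ the exponent $-k\log n+\binom{k}{2}\log\alpha$ is convex in $k$ (as $\log\alpha\ge 0$), so its maximum on $\{1,\ldots,\lfloor m/2\rfloor\}$ is attained at an endpoint: $T_1^{\mathrm{disj}}\asymp m^2/n\to 0$, while $\log T_{\lfloor m/2\rfloor}^{\mathrm{disj}}\approx -(\log n)^2/\log 2\cdot(1-\tfrac12\log_2\alpha)\to-\infty$ since $\log_2\alpha<1$ for $p\in(0,1)$. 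The non-disjoint part is organized by the cycle structure of $\sigma$ viewed as a partial permutation on $[m]$; for each cycle type, the increase in consistency probability is offset by a reduction in the number of admissible $(f,g)$ pairs, and the aggregate is dominated by the diagonal contribution ($k=m$, $f=g$), which is exactly $1/\E[N]\to 0$. Summing over $k$ and cycle types then yields $\E[N^2]/\E[N]^2\to 1$.

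The main obstacle, absent from the uniform case treated in~\cite{ChD23} where $\alpha=1$, is the amplification factor $\alpha^{\binom{k}{2}}$ arising for $p\ne\tfrac12$: it threatens to dominate $\E[N^2]$ via high-overlap pairs, and the entire argument rests on the sharp inequality $\log_2\alpha<1$ for $p\in(0,1)$, which is exactly what permits the critical $m$ to remain $2\log_2 n+1$ and underlies the authors' remark that the proof ``exhausts the limits of the second-moment method.'' A secondary but delicate step is the cycle-by-cycle control of non-disjoint $\sigma$-configurations: for each admissible cycle type one must verify that the product of consistency probability and configuration count stays within the envelope described above.
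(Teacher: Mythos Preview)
Your overall architecture---first moment for Phase~II, Paley--Zygmund plus an overlap decomposition for Phase~I---is exactly the paper's, and your correlation formula
\[
\P(f,g\text{ both embed}) = 2^{-(2\binom{m}{2}-\binom{k}{2})}\cdot\prod_C\bigl(p^{|C|}+(1-p)^{|C|}\bigr)
\]
is correct and equivalent to the paper's component product $\prod_{j,k}\tau_{j,k}^{|\CC_{j,k}|}$ once one factors out the $Y$-contribution. Your identification of $\alpha=2(p^2+(1-p)^2)<2$ as the key obstruction is also right.

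The genuine gap is in the ``non-disjoint part''. You write that ``for each cycle type, the increase in consistency probability is offset by a reduction in the number of admissible $(f,g)$ pairs, and the aggregate is dominated by the diagonal contribution''---but this is precisely the statement that needs proof, and it is where essentially all the work lies. Your disjoint/non-disjoint split (effectively at $k=m/2$) is not where the paper cuts, and your convexity argument only handles the very special configurations with $U\cap W=\varnothing$; for every $k$ there are many more non-disjoint $(f,g)$ pairs that you have not bounded.

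The paper's decomposition is different and is what makes the argument go through. It partitions by $r=|f(U)\cap g(U)|$ and then, for large $r$, further by $\ell=|\{u:f(u)=g(u)\}|$. For $r\le cm$ (any fixed $c\in(1/2,1)$) the $p$-dependent factor $\prod_C(p^{|C|}+(1-p)^{|C|})$ is simply bounded above by $1$, reducing to the $p=1/2$ computation; convexity in $r$ then gives $\sum_{r\le cm}\to 1$. For $r>cm$ the paper proves the combinatorial inequality
\[
|\CC(f,g)|\le \binom{m}{2}-\tfrac14(r-\ell)(r-3)
\]
via explicit component-type counting (their Lemma on sizes of $\CC_{1,1}$, $\bigcup_{j\ge2}\CC_{j,j}$, and $\bigcup_j\CC_{j,j+1}$), which yields a factor $\widehat p^{\,(r-\ell)(r-3)/4}$. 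Summing over $\ell$ gives $(1+m\widehat p^{\,(cm-3)/4})^m\to 1$, and only then is the residual sum $\sum_{r>cm}\binom{m}{r}2^{\binom{r}{2}}/n^r$ controlled by a second convexity argument that genuinely requires $c>1/2$. The cycle-by-cycle trade-off you allude to is exactly this $\ell$-analysis, and it does not follow from the disjoint computation; you would need to supply it.
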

\begin{remark}
\label{Wrem}
\begin{enumerate}[(i)]
\item
We will use the term ``{\em phase I}'' for the case when 
$X_{m(n)}$ is embeddable in $Y_n$ with probability tending to $1$.
Similarly, we will call ``{\em phase II}'' the case when the same
event has probability tending to $0$.
\item We should actually read the last sentence as: $C_n \to \infty$ arbitrarily
slowly. 
\item
The difference $m_+(n)-m_-(n)$ between the two integers 
at the right-hand sides of (I) and (II)
is either $1$ or $2$ for $n$ large enough.
The set $\Delta_* := \{n \in \N:\, m_+(n) - m_-(n) > 1\}$ 
contains all powers of $2$.
\item
In some sense, the gap 
is as small as possible.
If we consider the number $N$ of embeddings of $G(m(n),p)$ into
$G(n,1/2)$ and ask for which $m=m(n)$ we have 
$\lim_{n \to\infty}\E N =\infty$, or $\lim_{n\to \infty}\E N = 0$, respectively,
then we can see that 
$m(n) \le m_-(n)$,
or
$m(n) \ge m_+(n)$,
respectively.
Moreover, the slower $C_n$ converges to $\infty$,
the smaller the set $\Delta_*$ is.
We have a freedom to choose $C_n$ and this freedom is part of the theorem.
\item
Clearly, the conditions for phase I and II are almost,
but not exactly, complementary. In fact, when 
both conditions are violated, 
several things can happen, depending on the
precise choice of the integer sequence $m(n)$.
For example, if $n_k = 2^k$ and $m_k=2k+1$ then
the behavior of graph embedding is a very delicate question.
\item 
It is easy to see that 
\[
\P(X_m \text{ is embeddable in } Y_n)
\ge \P(X_{m+1} \text{ is embeddable in } Y_n).
\]
Indeed, if we think of $X_{m+1}$ as a graph on the set of vertices
$\{1,\ldots,m+1\}$ and if $X_{m+1}$ is embeddable in $Y_n$ then
the restriction of $X_{m+1}$ on $\{1,\ldots,m\}$ is
also embeddable in $Y_n$. But this restriction has the same
law as $X_m$.
\item
We see that the effect of $p$ vanishes from
the conditions, and the statement remains identical
to the one corresponding to the case $p=1/2$, this case being the one 
treated in \cite{ChD23}. 
This fact is easy to see insofar as phase II is concerned 
(see Section \ref{PH2emb} below) but it is far less
trivial for the other phase (treated in Section \ref{PH1emb}.)
\end{enumerate}
\end{remark}

The above result as well as the one below address some of the
open problems stated in \cite{ChD23}. In particular, \cite{ChD23} states
that, since all $G(\infty, p)$ graphs are isomorphic to $\Rado$, for
any $0<p<1$, understanding the largest isomorphic induced subgraph
of two independent $G(n,p)$ graphs is an interesting problem.
We shall consider two independent such graphs, $G(n,p)$, $G(n,q)$,
with $p$ not necessarily equal to $q$ and look for conditions
that establish the existence of a common subgraph of largest possible size.
To be precise, we give the following definition.

\begin{definition}[$m$-isomorphism]
\label{misodef}
Graphs $\Gamma, \Gamma'$ (of possibly different sizes)  
are {\em $m$-isomorphic}, and denote this by
\[
\Gamma \jso{m} \Gamma',
\]
if there are 
induced subgraphs $\Delta, \Delta'$ of $\Gamma, \Gamma'$, respectively,
both of sizes $m$, such that $\Delta \cong \Delta'$.
A bijection from $\Delta$ to $\Delta'$ will be called an 
{\em $m$-isomorphism}.
\end{definition}

The word ``$m$-isomorphic'' is ours and perhaps, linguistically,
not the best choice. The idea is that two $m$-isomorphic graphs
$\Gamma$ and $\Gamma'$ have a size-$m$ part that looks the same.
The larger the $m$ the more similar the graphs are. We are interested
in the largest such $m$.
This is what we refer to as ``{\em common subgraph problem}''.
In particular, if $\Gamma,\Gamma'$ both have sizes $n$, then they
are $n$-isomorphic iff they are exactly isomorphic.

Our next goal is to consider the common subgraph
problem between $G(n,p)$ and $G(n,q)$ when
$p,q$ are strictly between $0$ and $1$. Theorem \ref{thm2} is stated in terms of
the quantities
\begin{equation}
\label{taujk}
\begin{split}
\tau_{j,k} &\equiv \tau_{j,k}(p,q) := p^jq^k + (1-p)^j (1-q)^k,
\\
\tau &:= \tau_{1,1},
\\
\lambda &:= 1/\log(1/\tau),
\end{split}
\end{equation}
and the functions below. First let
\begin{equation}
\label{Wdef}
W(x) := x+ 2\lambda \log x + \frac{\lambda}{x} \log(2\pi x), \quad x \ge 1.
\end{equation}
It is easily seen that $W$ is strictly increasing 
and strictly concave
with $\lim_{x \to \infty} W(x) = \infty$
and $\lim_{x \to \infty} W'(x)=1$. 
See Section \ref{suppinfo}.
We then define $m_*(n)$ via
\begin{equation}
\label{mstar}
W(m_*(n)) = 4 \lambda \log n + 2 \lambda+1.
\end{equation}

\begin{figure}
\begin{center}
\includegraphics[width=0.35\textwidth]{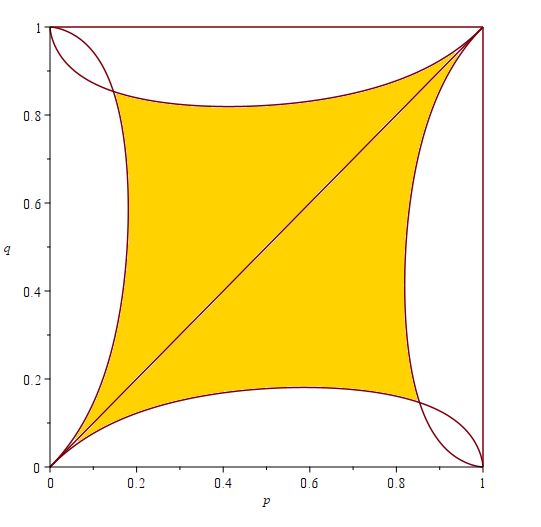}
\captionof{figure}{The set $\Y$ of $(p,q)$ for which 
we have a phase transition in the common subgraph problem. We refer to
$\Y$ as the admissible region.}
\label{figpq}
\end{center}
\end{figure}

\begin{theorem}[phase transition for the common subgraph problem]
\label{thm2}
Define
\begin{equation}
\label{thespis}
\Y:=\left\{(p,q) \in (0,1)\times (0,1):\, \max\{\tau_{1,2}(p,q), \tau_{2,1}(p,q)\right\} 
< \tau(p,q)^{3/2}\},
\end{equation}
a region depicted in Figure \ref{figpq}.
Let $X_n, Y_n$ be two independent random graphs with laws $G(n, p), G(n,q)$,
respectively, and with $(p,q) \in \Y$.
Then
\begin{numcases}                                                                  
{\lim_{n \to \infty} \P\left(X_n \stackrel{m(n)}{\cong} Y_n\right) = }
1, & $m(n) = \lfloor m_*(n)-(C_n/\log n)\rfloor$ \tag{I}
\\
0, & $m(n)= \lceil m_*(n)+(C_n/\log n)\rceil$ \tag{II}
\end{numcases}
where $C_n \to \infty$ such that $C_n/\log n \to 0$.
\end{theorem}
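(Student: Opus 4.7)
\emph{Plan.} I would apply the second moment method to
\[
N_m := \#\bigl\{(A,B,\pi):\, A\subset V(X_n),\, B\subset V(Y_n),\, |A|=|B|=m,\, \pi:A\to B \text{ an isomorphism } X_n^A\to Y_n^B\bigr\},
\]
so that $\{X_n \stackrel{m}{\cong} Y_n\}=\{N_m>0\}$. By independence of $X_n$ and $Y_n$ and of edges within each, a fixed bijection between two fixed $m$-sets is an isomorphism with probability $\tau^{\binom{m}{2}}$, giving $\E[N_m]=\binom{n}{m}^2 m!\,\tau^{\binom{m}{2}}$. Applying Stirling to $\log\E[N_m]$ and using $\lambda=1/\log(1/\tau)$, one finds
\[
\tfrac{2\lambda}{m}\log\E[N_m] \;=\; 4\lambda\log n + 2\lambda+1 - W(m) + o(1) \;=\; W(m_*(n))-W(m) + o(1).
\]
Since $W$ is concave and increasing with $W'(x)\to 1$, choosing $m=m_+(n)\ge m_*(n)+C_n/\log n$ makes the right-hand side at most $-(1-o(1))C_n/\log n$, and because $m_+(n)\sim 4\lambda\log n$ we obtain $\log\E[N_m]\le -2C_n(1-o(1))\to -\infty$. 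Markov's inequality closes phase II.

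\emph{Phase I.} By the Paley--Zygmund inequality, phase I reduces to $\E[N_m^2]\le(1+o(1))(\E[N_m])^2$. Expanding,
\[
\E[N_m^2] = \sum_{(A_i,B_i,\pi_i)_{i=1,2}} \P(\pi_1 \text{ and } \pi_2 \text{ are both isomorphisms}),
\]
the joint probability is analysed via the bipartite ``constraint (hyper-)graph'' whose left nodes are the distinct $X$-variables $X_n[\{u,v\}]$ (over $\{u,v\}\subset A_1\cup A_2$ that appear in some constraint), whose right nodes are the distinct $Y$-variables $Y_n[\{x,y\}]$, and whose edges are the $2\binom{m}{2}$ equalities $X_n[\{u,v\}]=Y_n[\pi_i(\{u,v\})]$. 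Each connected component forces all its incident Bernoulli variables to agree, contributing a factor $\tau_{a,b}=p^aq^b+(1-p)^a(1-q)^b$ where $(a,b)$ counts its left/right nodes. The term with $A_1\cap A_2=B_1\cap B_2=\emptyset$ contributes $\bigl(\binom{n-m}{m}/\binom{n}{m}\bigr)^2(\E[N_m])^2 = (1+o(1))(\E[N_m])^2$; the rest must be shown negligible.

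\emph{Main obstacle.} The technical heart is controlling the non-disjoint overlap contributions. Stratify the sum by $(k,\ell,j)$, where $k=|A_1\cap A_2|$, $\ell=|B_1\cap B_2|$, and $j$ counts vertices $u\in A_1\cap A_2$ on which $\pi_1(u)=\pi_2(u)\in B_1\cap B_2$. Each deviation from $(0,0,0)$ creates some components of type $(1,2)$ or $(2,1)$, replacing factors $\tau^2$ by $\tau_{1,2}$ or $\tau_{2,1}$, while a combinatorial Stirling weight counts the ways to realise the overlap. The admissibility hypothesis $\max(\tau_{1,2},\tau_{2,1})<\tau^{3/2}$ is exactly what is needed for the probability loss per swap to beat the combinatorial gain; a careful geometric-series estimate over the number of swaps, summed against $(k,\ell,j)$, shows the total non-diagonal contribution is $o((\E[N_m])^2)$. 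This is precisely where the second-moment method saturates: outside $\Y$ the variance is dominated by rare large-overlap configurations, necessitating the more delicate probabilistic technique of \cite{SWZ2023}.
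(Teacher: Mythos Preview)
Your proposal is correct and follows essentially the same approach as the paper: phase~II via the first moment and the function $W$, and phase~I via the second moment method with the same bipartite constraint (``auxiliary edge'') graph, component-wise factorization into $\tau_{j,k}$'s, and stratification by the overlap triple $(d,r,\ell)$ (your $(k,\ell,j)$). The paper's execution is more elaborate than your sketch suggests---components can have type $(j,j\pm1)$ or $(j,j)$ for arbitrary $j$ (paths or cycles, not just $(1,2)$ and $(2,1)$), the correlation bound is reduced to a form governed by a parameter $\gamma=\lambda\log(\tau/\tau_{1,2})\in(1/2,1]$, and the overlap sum is controlled by a small-$r$/large-$r$ regime split with convexity arguments rather than a single geometric series---but the strategy is the same.
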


\begin{remark}
\begin{enumerate}[(i)]
\item
The points made in Remark \ref{Wrem}(i)--(v), with obvious
modifications, remain valid for theorem \ref{thm2} also.
\item
The sequence $m_*(n)$ satisfies
\[
m_*(n) = 4\lambda \log n+2\lambda+1
-2\lambda \log(4\lambda \log n+2\lambda+1) 
+ O\left(\frac{\log\log n}{\log n}\right),
\]
as $n \to \infty$.
See Lemma \ref{mstarapprox} in Section \ref{suppinfo}.
We can verify, numerically, that the approximation is extremely sharp
when $(p,q)$ is away from $(1,1)$ or $(0,0)$.
\end{enumerate}
\end{remark}

\section{\bfseries\scshape The graph embedding problem}
Fix two finite sets $U, V$ with cardinalities $m, n$ respectively, where $m \le n$.
Elements of $\PP_2(U)$ are called edges of $U$. Similarly, for $\PP_2(V)$.
Consider two independent random graphs $X\equiv X_m, Y\equiv Y_n$ such that
$X_m$ has law $G(U,p)$ and $Y_n$ has law $G(V,1/2)$.
Identifying $X$ with a collection of i.i.d.\ Bernoulli$(p)$ random variables
$\{X(e), e \in \PP_2(U)\}$, we refer to those
$e \in \PP_2(U)$ such that $X(e)=1$ as edges of $X$.
Similarly for $Y$.
Let
\[
\II\equiv \II_{U,V} :=\text{ the collection of injective functions
from $U$ to $V$,}
\]
a set of size $(n)_m = n(n-1)\cdots(n-m+1)$.
If $f \in \II_{U,V}$ we let $\ran f \subset V$ be 
its range. Then
\begin{equation} \label{N1}
N = \sum_{f \in \II_{U,V}} \I_{f(X) = Y^{\ran f}}
\end{equation}
is the number of isomorphisms between the two random graphs, since, by 
definition, $Y^{\ran f}$ is the restriction of $Y$ onto $\ran f$.
The event of interest is
\[
\{X \text{ is embeddable in } Y\} = \{N > 0\}.
\]
\subsection{\bfseries Phase II of the graph embedding problem}
\label{PH2emb}
Since $m$ will be taken to be much smaller than $n$, it is not unreasonable
to postulate, in view of 
\[
\P(N > 0) \leq \E N,
\]
that the the threshold for $m$ will be the ``least'' function of $n$ such that
$\E N \to 0$.
\begin{align}
\E N &= \sum_{f \in \II_{U,V}} \P(\forall e \in \PP_2(U)~ X(e)=Y(f(e)))
\nonumber
\\
&= \sum_{f \in \II_{U,V}} \prod_{e \in \PP_2(U)} \P(X(e)=Y(f(e)))
\nonumber
\\
&= (n)_m \left(\tfrac12 p + \tfrac12 (1-p) \right)^{\binom{m}{2}}
= (n)_m 2^{-\binom{m}{2}}.
\label{ENone}
\end{align}
We can easily see that
\begin{equation}
\label{stitay}
\text{$(n)_m/n^m \to 1$ as $n \to \infty$ when $m=m(n)=O(\log n)$}.
\end{equation}
The question then becomes that of finding the least $m=m(n)$ such
that 
\[
n^m 2^{-\binom{m}{2}} \to 0.
\]
\begin{lemma}
If $m=m(n) = \left\lceil 2 \log_2 n + 1 + \frac{C_n}{\log n}\right\rceil$
and $C_n \to \infty$, then $\E N \to 0$.
\end{lemma}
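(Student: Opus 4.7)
The plan is to apply Markov's inequality together with the explicit formula \eqref{ENone} already derived, namely $\E N = (n)_m\, 2^{-\binom{m}{2}}$, and then reduce everything to showing that the quantity
\[
\phi_n(m) \;:=\; m\log_2 n - \binom{m}{2}
\]
tends to $-\infty$ when $m$ is set equal to $m_+(n)$. Using the trivial bound $(n)_m\le n^m$ one has $\log_2 \E N \le \phi_n(m)$, so this will suffice. No fine asymptotics for $(n)_m/n^m$ are needed here (although they are available from \eqref{stitay}), which keeps the argument crisp.

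The key observation is that $\phi_n$ is a downward-opening quadratic in $m$ whose maximum is attained at $m=\log_2 n +\tfrac12$; since our $m$ is of order $2\log_2 n$, $\phi_n$ is strictly decreasing in $m$ throughout the range of interest. Hence it is enough to evaluate $\phi_n$ at the \emph{lower} value $\alpha_n:=2\log_2 n+1+C_n/\log n$, which is $\le m_+(n)$ by definition of the ceiling. Writing $\alpha_n - 1 = 2\log_2 n + C_n/\log n$ gives
\[
\phi_n(\alpha_n) \;=\; \alpha_n\left(\log_2 n-\tfrac{\alpha_n-1}{2}\right)
\;=\; -\,\frac{\alpha_n\, C_n}{2\log n}.
\]
Since $\alpha_n \sim 2\log_2 n = (2/\log 2)\log n$, this equals $-C_n/\log 2 + o(C_n)$, which tends to $-\infty$ precisely because $C_n\to\infty$. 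Therefore
\[
\E N \;\le\; 2^{\phi_n(m_+(n))} \;\le\; 2^{\phi_n(\alpha_n)} \;\longrightarrow\; 0,
\]
and the lemma follows.

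There is essentially no obstacle beyond careful bookkeeping: one must only (a) verify the monotonicity of $\phi_n$ to justify replacing the ceiling $m_+(n)$ by the slightly smaller $\alpha_n$ (which can only \emph{decrease} the bound), and (b) be careful with the base-$2$ versus natural logarithm conversion in the final estimate, where the factor $1/\log 2$ from $\log_2 n/\log n$ is what makes $C_n\to\infty$ translate into $\phi_n\to -\infty$. This explains cleanly why the correction term in the definition of $m_+(n)$ is $C_n/\log n$ rather than, say, $C_n$ itself: the latter would be wastefully large, while any term of smaller order than $1/\log n$ would fail to drive $\phi_n(\alpha_n)$ to $-\infty$.
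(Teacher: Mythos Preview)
Your proof is correct and follows essentially the same route as the paper's. Both arguments bound $(n)_m$ by $n^m$ (the paper via the asymptotic \eqref{stitay}, you via the cleaner inequality $(n)_m\le n^m$), rewrite the bound as $2^{-\frac12 m(m-2\log_2 n-1)}$ (your $-\phi_n(m)$ is exactly $\tfrac12 m(m-2\log_2 n-1)$), and then use $m\ge \alpha_n$ to extract the factor $C_n/\log n$; the only cosmetic difference is that you invoke monotonicity of $\phi_n$ to pass from $m_+(n)$ to $\alpha_n$, whereas the paper substitutes $m-2\log_2 n-1\ge C_n/\log n$ directly into the factored expression. One small remark: your line ``$\alpha_n\sim 2\log_2 n$'' tacitly uses $C_n/\log n\to 0$, which the lemma as stated does not assume; the inequality $\alpha_n\ge 2\log_2 n$ suffices and yields $\phi_n(\alpha_n)\le -C_n/\log 2\to -\infty$ without that extra hypothesis.
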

\begin{proof}
By \eqref{stitay}, we simply need to show that $n^m 2^{-\binom{m}{2}}\to 0$.
We have 
\[
n^m 2^{-\binom{m}{2}} 
= 2^{- \frac12 m(m-2 \log_2 n -1)}.
\]
But
\[
m(m- 2 \log_2 n -1) \ge m \frac{C_n}{\log n} \to \infty,
\]
because $m/\log_2 n \to 2$ and $C_n \to \infty$.
\end{proof}

This proves the second part (phase II) of Theorem \ref{thm1} but also gives us
reasons to suspect that the ``cut-off function'' 
$2\log_2 n+1$ will work for the phase I too.
We remark that, even though this function does not depend on $p$ (and this
is because $G(n,1/2)$ remains the same in law if we swap edges and non-edges),
this does not make the other part (phase I) of the theorem trivial 
when $p\neq 1/2$. We need to work harder to show that the effect of $p$ vanishes
as will be seen below.

\subsection{\bfseries Phase I of the graph embedding problem}
The rest of the proof proceeds on the basis of the inequality
\begin{equation}
\label{CS}
\P(N>0) \ge \frac{(\E N)^2}{\E N^2}.
\end{equation}
The goal is to show that $\P(N>0) \to 1$ under the conditions for phase I
of Theorem \ref{thm1}. 
Throughout the rest of this section, we let
\[
m \equiv m(n) = \left\lfloor 2 \log_2 n+1 - {C_n}/{\log n} \right\rfloor,
\]
for some sequence $C_n \to \infty$ with $C_n/\log n \to 0$.
Our plan consists in finding a suitable upper bound for the
reciprocal of the right-hand side of \eqref{CS}, say,
\[
\E N^2/(\E N)^2 \le \SSS;
\]
see Proposition \ref{esoteric} below;
and then showing that
\[
\limsup_{n \to \infty} \SSS \le 1
\]
see Proposition \ref{S1part}. This will 
conclude the proof of phase II of \eqref{thm1}.

To achieve this, we need to use an auxiliary edge graph, 
i.e., a graph whose Vertices are elements of $\PP_2(U ) \cup \PP_2(V)$.
This is motivated by the trivial relation \eqref{quintix} in Section \ref{PH1emb}.
Such a device will facilitate the computation of correlations.
Subsequently, we make some combinatorial estimates of various sets of
pairs of injections from $U$ to $V$ (Section \ref{CoCo}),
and then estimate the sizes of various classes of components of
the auxiliary edge graph (Section \ref{SiSi}).
We finally put things together to obtain a good tight bound $\SSS$
in Section \ref{SSSsubsec}.


\subsubsection{\bfseries The auxiliary edge graph}
\label{PH1emb}
First, some notational convention.
A priori, $f \in \II_{U,V}$ acts on points $u \in U$.
At a higher level, it also acts naturally on subsets of $U$, and, in particular,
on edges $e \in \PP_2(U)$. We shall use the notation 
\[
\PP_2(U) \ni e \mapsto f(e) \in \PP_2(V)
\]
for this action. At an even higher level, it acts on collections of subsets of $U$,
so $f(\PP_2(U))$ denotes the set $\{f(e): e \in \PP_2(U)\}$.

As before, let ($X(e)$, $e \in \PP_2(U)$) be i.i.d.\ Bernoulli$(p)$ random
variables and let ($Y(e)$, $e \in \PP_2(V)$) be i.i.d.\ Bernoulli$(1/2)$ random
variables, the two collections being independent.
From \eqref{N1} we get
\begin{equation}
\label{quintix}
N^2 =  \sum_{f, g \in \II_{U,V}} \prod_{e \in \PP_2(U)} \1_{X(e)=Y(f(e))=Y(g(e))}.
\end{equation}
We use the following device to rewrite the condition in the last indicator 
function.
\begin{definition}
\label{croesus}
Let $\EE(f,g)$ be the collection of pairs
$\ee = (e, e')$ where $e \in \PP_2(U)$ and $e'=f(e)$ or
$e'=g(e)$, that is,
\[
\EE(f,g):= \big\{ (e, f(e)):\, e \in \PP_2(U)\big\}
\cup \big\{ (e, g(e)):\, e \in \PP_2(U)\big\}.
\]
If $\ee = (e, e') \in \EE(f,g)$, 
we write
\[
X=Y \text{ on } \ee \overset{\text{def}}{\iff} X(e)=Y(e').
\]
\end{definition}
We then have
\[
\forall e \in \PP_2(U)~~ X(e) = Y(f(e))=Y(g(e)) 
\iff 
\forall \ee \in \EE(f,g)~~  X=Y \text{ on } \ee,
\]
and so \eqref{quintix} reads
\begin{equation}
\label{sextix}
N^2 = \sum_{f,g \in \II_{U,V}} 
\prod_{\eee \in \EE(f,g)} \1_{X=Y \text{ on } \eee}.
\end{equation}

It is useful to rearrange the product in the last display into
a product of independent random variables. To do this,
we think of every $\ee \in \EE(f,g)$ as an Edge
in a graph whose set of Vertices is the set 
$\PP_2(U) \cup f(\PP_2(U)) \cup g(\PP_2(U))$.
Denote this graph by $\TT(f,g)$ and note that it is bipartite
since every $\ee \in \EE(f,g)$ has one endpoint in 
\begin{equation}
\label{LeftV}
\LL:=\PP_2(U) \quad \text{ (the set of left Vertices)}
\end{equation}
and another endpoint in 
\begin{equation}
\label{RightV}
\RR(f,g):=f(\PP_2(U)) \cup g(\PP_2(U)) \quad \text{ (the set of right Vertices)}.
\end{equation}
That is,
\begin{equation}
\label{Tgraph}
\TT(f,g) := \big( \LL \cup \RR(f,g), \, \EE(f,g)\big).
\end{equation}
\begin{ling}
We capitalize the first letter in the word Vertices to emphasize the fact
that a Vertex of $\TT(f,g)$ is a actually an edge of $U$ or $V$. 
Similarly, we write Edge to emphasize the fact that it is a pair
of edges.
\end{ling}
Note that although the Edges $\ee$, that is the elements of $\EE(f,g)$
as in Def.\ \ref{croesus}, are directed, we assume that they lose their
direction when we define $\TT(f,g)$.
See Figure \ref{figtfg} for an example of $\TT(f,g)$ when                         
$m=4$, $n=6$ and two specific injections $f, g: [m]\to [n]$.

\begin{figure}
\begin{center}
\includegraphics[width=0.60\textwidth]{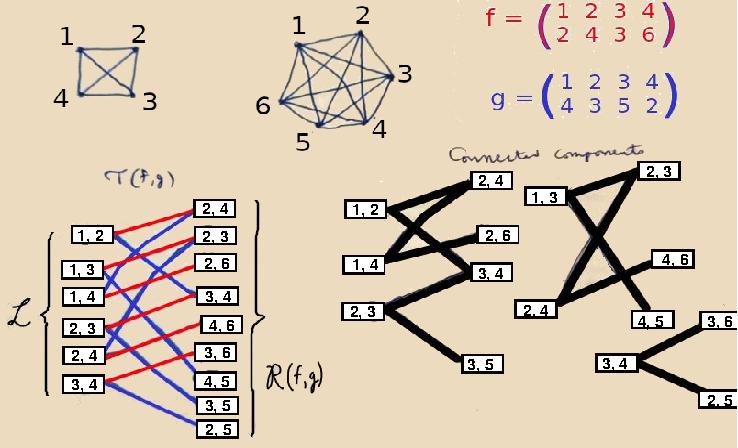}
\captionof{figure}{Assume that $m=4$, $n=6$,
and $f, g$ the injections shown in the figure.
Then $\TT(f,g)$ is shown on the bottom left and its
connected components on the bottom right.} 
\label{figtfg}
\end{center}
\end{figure}

A {\em connected component} $C$ of $\TT(f,g)$ is a connected subgraph.
Write
\[
\CC(f,g) := \text{ set of connected components of $\TT(f,g)$}.
\]
\begin{definition}
If $C \in \CC(f,g)$ we write
\[
X=Y \text{ on } C \iff X=Y \text{ on } \ee \text{ for all Edges } \ee \text{ of } C.
\]
\end{definition}
Since, by definition, two connected components share no common edges,
\eqref{sextix} is written as
\[
N^2 = \sum_{f,g \in \II_{U,V}} \prod_{C \in \CC(f,g)} \1_{X=Y \text{ on } C}.
\]
But now note that the last product is a product of independent random variables
so that
\begin{equation}
\label{septix}
\E N^2 = \sum_{f,g \in \II_{U,V}} \prod_{C \in \CC(f,g)} \P(X=Y \text{ on } C).
\end{equation}

\begin{lemma}
Given $f, g \in \II_{U,V}$,
if $C$ is a connected component of $\TT(f,g)$
has $j$ Vertices in $\LL$ and $k$ in $\RR(f,g)$ then
\begin{equation}
\label{taujkuno}
\P(X=Y \text{ on } C) = (p^j+(1-p)^j) (1/2)^k =: \tau_{j,k}.
\end{equation}
\end{lemma}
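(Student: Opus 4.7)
The plan is to exploit the bipartite structure of $\TT(f,g)$ together with the connectedness of $C$ to show that the event $\{X=Y \text{ on } C\}$ forces all the random variables indexed by Vertices of $C$ to coincide in value; after that, the probability computation reduces to conditioning on the common value.

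More precisely, let $\LL_C \subset \LL$ be the set of left Vertices of $C$, so $|\LL_C|=j$, and let $\RR_C \subset \RR(f,g)$ be the set of right Vertices of $C$, so $|\RR_C|=k$. The random variables $\{X(e):\ e \in \LL_C\}$ are i.i.d.\ Bernoulli$(p)$, the random variables $\{Y(e'):\ e' \in \RR_C\}$ are i.i.d.\ Bernoulli$(1/2)$, and the two families are independent. I would first observe that, by definition,
\[
X=Y \text{ on } C \iff X(e)=Y(e') \text{ for every Edge } \ee = (e,e') \text{ of } C.
\]
Since $C$ is connected, I can pick any Vertex $v_0$ of $C$ and, for any other Vertex $v$ of $C$, a path from $v_0$ to $v$ in $C$; walking along this path and applying the equalities $X(e)=Y(e')$ at each Edge shows that the value at $v$ must equal the value at $v_0$. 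Thus the event $\{X=Y \text{ on } C\}$ is exactly the event that there is a common value $\xi \in \{0,1\}$ such that $X(e)=\xi$ for every $e \in \LL_C$ and $Y(e')=\xi$ for every $e' \in \RR_C$.

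The probability is then immediate by partitioning on $\xi$:
\begin{align*}
\P(X=Y \text{ on } C)
&= \P\bigl(\forall e \in \LL_C~X(e)=0,~\forall e' \in \RR_C~Y(e')=0\bigr) \\
&\quad{}+\P\bigl(\forall e \in \LL_C~X(e)=1,~\forall e' \in \RR_C~Y(e')=1\bigr) \\
&= (1-p)^j (1/2)^k + p^j (1/2)^k \\
&= (p^j+(1-p)^j)(1/2)^k = \tau_{j,k},
\end{align*}
where in the second equality I used the full independence of the $X$'s and $Y$'s.

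There is no real obstacle here; the only point deserving any care is the reduction from ``compatibility along every Edge of $C$'' to ``all Vertex-values coincide'', which is exactly where connectedness of $C$ is used. Bipartiteness of $\TT(f,g)$ is not needed for this step, but it is consistent with the interpretation: the common value $\xi$ is simultaneously a realisation of a Bernoulli$(p)$ sample (via any left Vertex) and of a Bernoulli$(1/2)$ sample (via any right Vertex), which is why the two factors $p^j+(1-p)^j$ and $(1/2)^k$ appear separately.
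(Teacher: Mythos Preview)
Your proof is correct and follows essentially the same approach as the paper: both use connectedness of $C$ to reduce the event $\{X=Y\text{ on }C\}$ to ``all Vertex-values coincide'', then split on the common value $\xi\in\{0,1\}$ and use independence to obtain $p^j(1/2)^k+(1-p)^j(1/2)^k$. Your explicit path-walking argument spells out the connectedness step in slightly more detail than the paper does, but the substance is identical.
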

\begin{proof}
Let $C$ be a connected component of $\TT(f,g)$ and let $\text{Vert}(C)$ be
the set of its Vertices.
By definition, $X=Y$ on $C$ is equivalent to $X(e)=Y(e')$ for
all Edges $(e, e')$ of $C$, where $e \in \LL$, $e'\in \RR(f,g)$
(since $C$ is bipartite). 
Since $C$ is connected, $X=Y$ on $C$ then means that the random
variables $(X(e), e \in \LL\cap \text{Vert}(C)) 
\cup (Y(e'), e' \in \RR(f,g)\cap\text{Vert}(C))$
are all equal.
Since $\LL\cap \text{Vert}(C)$ has size $j$ and $\RR(f,g)\cap\text{Vert}(C)$
has size $k$, we have $j+k$ independent Bernoulli random variables,
with $j$ of them having parameter $p$ and the rest $1/2$.
The probability that they are all equal to $1$ is $p^j (1/2)^k$
and the probability that they are all equal to $0$ is
$(1-p)^j (1/2)^k$. So \eqref{taujkuno} is proved.
\end{proof}

So if we let
\begin{equation}
\label{compcl}
\CC_{j,k}(f,g) := \{C \in \CC(f,g):\, C \text{ has $j$ Vertices in $\LL$ and $k$ in $\RR(f,g)$}\}
\end{equation}
\eqref{septix} becomes
\begin{equation}
\label{N2bis}
\E N^2 
= \sum_{f,g \in \II_{U,V}} \prod_{j, k \ge 1} \tau_{j,k}^{|\CC_{j,k}(f,g)|},
\end{equation}
from which it is evident that we need to obtain information about
$\CC_{j,k}(f,g)$ and their sizes.
We first observe that any the number of right Vertices of a connected component
minus the number of left Vertices is either $0$ or $1$:
\begin{lemma}
\label{classlem}
$\CC_{j,k}(f,g) \neq\varnothing$ iff $k=j$ or $k=j+1$.
\end{lemma}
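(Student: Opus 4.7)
The plan is to bound $j$ and $k$ by combining a degree count in $\TT(f,g)$ with the connectedness of $C$. Two preliminary observations will drive the argument: each left Vertex $e \in \LL$ has degree $|\{f(e), g(e)\}| \in \{1,2\}$ in $\TT(f,g)$, because its only Edges go to $f(e)$ and $g(e)$; and both $f$ and $g$, viewed as maps from $\LL$ to $\RR(f,g)$, are injective, since the natural action of an injection $U \to V$ on two-element subsets is itself injective.

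For the lower bound $k \geq j$, I would use connectedness of $C$ to observe that every right Vertex of $C$ must be adjacent to some left Vertex of $C$, and since the only possible left-neighbors of $e' \in \RR(f,g)$ are $f^{-1}(e')$ and $g^{-1}(e')$ (when defined), this yields the identity
\[
C \cap \RR \;=\; f(C \cap \LL) \;\cup\; g(C \cap \LL).
\]
By the injectivity above, $|f(C \cap \LL)| = |g(C \cap \LL)| = j$, so inclusion--exclusion gives $k = 2j - |f(C \cap \LL) \cap g(C \cap \LL)| \geq j$.

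For the upper bound $k \leq j+1$, let $d$ denote the number of $e \in C \cap \LL$ with $f(e) = g(e)$. Summing left-Vertex degrees counts every Edge of $C$ exactly once, giving $|E(C)| = 2j - d$; connectedness forces $|E(C)| \geq |V(C)| - 1 = j + k - 1$, and combining yields $k \leq j + 1 - d \leq j + 1$. There is no real obstacle here; the only minor subtlety is recognizing that $f$ and $g$ remain injective at the level of pairs, which is immediate. For the converse direction, both target values are easily realized: a single left Vertex $e$ with $f(e) = g(e)$ whose image is otherwise isolated gives $(j,k) = (1,1)$, while $f(e) \neq g(e)$ with $f(e), g(e)$ otherwise isolated gives $(j,k) = (1,2)$; larger components arise from analogous local constructions.
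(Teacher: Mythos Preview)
Your argument is correct. Both bounds are valid: the identity $C\cap\RR=f(C\cap\LL)\cup g(C\cap\LL)$ holds exactly as you say (each right Vertex has degree at least one, and components are closed under adjacency), so inclusion--exclusion with $|f(C\cap\LL)|=|g(C\cap\LL)|=j$ gives $k\ge j$; and the edge count $|E(C)|=2j-d$ combined with $|E(C)|\ge |V(C)|-1$ gives $k\le j+1-d\le j+1$.

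Your route differs from the paper's in two respects. First, the paper splits into cases: it observes that if any left Vertex $e$ satisfies $f(e)=g(e)$ then the component containing $e$ is necessarily the two-Vertex component $\{e,f(e)\}\in\CC_{1,1}$, and otherwise every left Vertex has degree exactly~$2$. You avoid this split by carrying the parameter $d$ throughout, which is cleaner. Second, for the upper bound the paper argues structurally: once every left Vertex has degree $2$ and every right Vertex has degree $1$ or $2$, the component is a path or a cycle, so the number $k_1$ of degree-$1$ right Vertices is $0$ or $2$; then $k_1+2k_2=2j$ forces $k=k_1+k_2\in\{j,j+1\}$. Your spanning-tree inequality $|E(C)|\ge |V(C)|-1$ is more economical for the bare statement, but the paper's version yields the path/cycle classification of components (Figure~\ref{fig2}) that is used downstream, e.g.\ in Lemma~\ref{escc}. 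As for the converse, the paper, like you, treats it only by exhibiting the $(1,1)$ case; the ``iff'' in the statement is really shorthand for the constraint $k\in\{j,j+1\}$, which is all that is needed later.
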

\begin{proof}
Let $e \in \LL$. If $f(e)=g(e)=e'$ then $\{e,e'\} \in \EE(f,g)$.
Since $f, g$ are injections the only preimage of $e'$ under $f$ and under $g$ is
$e$. Hence $\{e,e'\}$ is a connected component, an element of $\CC_{1,1}(f,g)$.
Let $C$ be a connected component not of this type. Assume $C \in \CC_{j,k}(f,g)$.
Since $f, g$ are injections,
for each Vertex $e \in \LL$ belonging to $C$ we must have two Edges
adjacent to $e$, one being $\{e,f(e)\}$ and the other $\{e,g(e)\}$. 
Hence the degree of every $e \in \LL$ is $2$.
So $C$ has exactly $2j$ Edges.
On the other hand, each $e' \in \RR(f,g)$ has degree $1$ or $2$.
If there are $k_i$ Vertices in $\RR(f,g)$, $i=1,2$, then,
counting Edges again, $k_1+2k_2=2j$. 
Since $C$ is connected, it is easy to see that $k_1= 0$ or $2$.
Hence $k_2=j$ or $k_2 = j-1$.
Therefore, $k=k_1+k_2=1+j$ or $j$.
\end{proof}

Lemma \ref{classlem} says that there are three kinds of connected components
of the auxiliary edge graph: the elements of $\CC_{1,1}$,
the elements of $\CC_{j,j+1}$, $j\ge 1$,
and the elements of $\CC_{j,j}$, $j \ge 2$. They look as
in Figure \ref{fig2}.

\begin{figure}
\begin{center}
\includegraphics[width=0.5\textwidth]{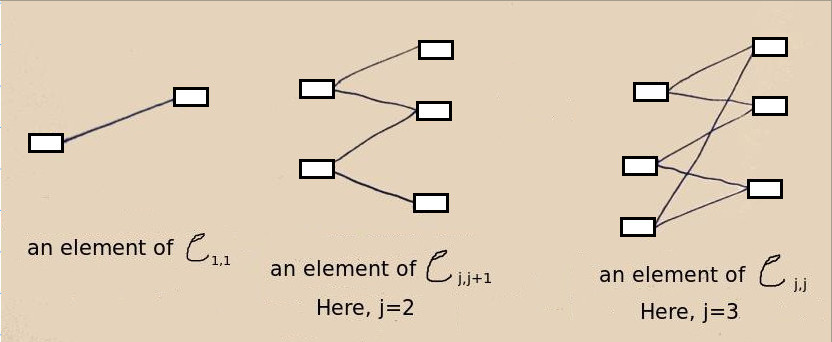}
\captionof{figure}{The only possible types of connected components of $\TT(f,g)$}
\label{fig2}
\end{center}
\end{figure}

\subsubsection{\bfseries Combinatorial estimates}
\label{CoCo}
The expression for $\E N^2$ involves a sum over pairs $(f,g)$ of
injections from $U$ to $V$. We will need to partition this set
as
\[
\II_{U,V} \times \II_{U,V} = \bigcup_{r=0}^m \HH_r,
\]
where 
\begin{equation}
\label{Hrdef}
\HH_r=\left\{(f,g) \in \II_{U,V} \times \II_{U,V}:\, |f(U)\cap g(U)|=r\right\},
\quad 0\le r \le m,
\end{equation}
and then further partition $\HH_r$ as
\[
\HH_r = \bigcup_{\ell=0}^r \HH_{r,\ell},
\]
where
\begin{equation}
\label{Hrldef}
\HH_{r, \ell} :=\big\{ (f, g) \in \HH_r : \#\{u\in U: f(u) = g(u)\} = \ell\big\},
\quad 0 \le \ell \le r.
\end{equation}
The reason is that we will later need to break the sum in \eqref{N2bis}
according to the first partition and then according to the second.

\begin{lemma}[cardinality of $\HH_r$]                                             
\label{cardHr}
\begin{equation}
\label{meno0}
|\HH_r| = (n)_m \binom{m}{r} (m)_r (n-m)_{m-r}.
\end{equation}
\end{lemma}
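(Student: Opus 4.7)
The plan is to count pairs $(f,g) \in \HH_r$ by fixing $f$ first and then enumerating admissible $g$'s. Since $f$ is an arbitrary injection $U \to V$, there are $(n)_m$ choices, and by symmetry the count of valid $g$ depends only on the image $f(U)$, not on $f$ itself. So it suffices to show that, given $f$, the number of injections $g:U \to V$ with $|f(U) \cap g(U)| = r$ equals $\binom{m}{r}(m)_r (n-m)_{m-r}$.

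To describe such a $g$, I will use the key observation that, since $g$ is injective, $|f(U) \cap g(U)| = r$ is equivalent to $|g^{-1}(f(U))| = r$. So I can specify $g$ by the following three independent choices: first, pick the subset $A := g^{-1}(f(U)) \subset U$ of size $r$, contributing a factor $\binom{m}{r}$; second, pick the injection $g|_A : A \to f(U)$ from an $r$-set into an $m$-set, contributing $(m)_r$; third, pick the injection $g|_{U \setminus A} : U \setminus A \to V \setminus f(U)$ from an $(m-r)$-set into an $(n-m)$-set, contributing $(n-m)_{m-r}$.

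These three choices are visibly independent and together determine $g$ uniquely as an injection with the required intersection condition. Multiplying by $(n)_m$ for the initial choice of $f$ yields the claimed formula. There is no real obstacle here, beyond making sure the three ingredients of $g$ are indeed free and uniquely reconstruct $g$; the identity $\binom{m}{r}\binom{n-m}{m-r}\, m! = \binom{m}{r}(m)_r(n-m)_{m-r}$ provides an alternative sanity check by counting instead via the pair (set $g(U)$, bijection $U \to g(U)$).
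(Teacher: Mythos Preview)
Your proof is correct and follows essentially the same approach as the paper: fix $f$ in $(n)_m$ ways, then decompose $g$ according to the partition $U = g^{-1}(f(U)) \cup g^{-1}(V\setminus f(U))$, counting the two restrictions separately. The paper's $U_1, U_2$ are exactly your $A, U\setminus A$.
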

\begin{proof}
We can pick $f$ in $(n)_m$ ways. We notice that any $g$ such
that $(f, g) \in \HH_r$, for the particular $f$ we selected,
can be written uniquely as $g = g_1 \cup g_2$ where
$g_1 : U_1 \to  f(U)$ and $g_2: U_2 \to V \backslash f(U)$ and $U_1$, $U_2$
are a partition of $U$ into $r$, $m-r$ elements respectively.
We can pick $U_1$, $U_2$ in $\binom{m}{r}$ ways. Then we can
select $g_1$ in $\II_{U_1, f(U)}$ in $(m)_r$ ways and $g_2$ in
$\II_{U_2, V \backslash f(U)}$ in $(n-m)_{m-r}$ ways.
\end{proof}

\begin{lemma}[upper bound for the cardinality of $\bigcup_{k=\ell}^r \HH_{r, k}$]
\label{cardHrl}                                                                   
\begin{equation}
\label{meno}
\left|\bigcup_{k=\ell}^r \HH_{r, k}\right|
\le (n)_m \binom{m}{\ell,r-\ell,m-r} (m-\ell)_{r-\ell} (n - m)_{m - r}.
\end{equation}
\end{lemma}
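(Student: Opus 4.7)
The strategy is a counting-with-overcounting argument: I will describe a procedure that generates every pair in $\bigcup_{k=\ell}^r \HH_{r,k}$ at least once, count the number of tuples produced by this procedure, and use that count as an upper bound. The key observation is that if $(f,g) \in \HH_{r,k}$ with $k \ge \ell$, we have a lot of freedom in ``designating'' which of the $k$ coincidences $u \in U$ with $f(u)=g(u)$ are to be flagged.

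Concretely, I propose to count tuples $(f, (U_0, U_1, U_2), g_0, g_1, g_2)$ where:
(1) $f \in \II_{U,V}$, contributing $(n)_m$;
(2) $(U_0, U_1, U_2)$ is an ordered partition of $U$ with $|U_0|=\ell$, $|U_1|=r-\ell$, $|U_2|=m-r$, contributing $\binom{m}{\ell, r-\ell, m-r}$;
(3) $g_0 = f|_{U_0}$ (forced: $1$ way);
(4) $g_1 \in \II_{U_1,\, f(U)\setminus f(U_0)}$, contributing $(m-\ell)_{r-\ell}$;
(5) $g_2 \in \II_{U_2,\, V\setminus f(U)}$, contributing $(n-m)_{m-r}$.
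The assembled map $g := g_0 \cup g_1 \cup g_2$ is an injection because $g_0(U_0)\cup g_1(U_1) \subset f(U)$ are disjoint (since $g_1$ avoids $f(U_0)$) while $g_2(U_2)$ avoids $f(U)$ entirely. Moreover $|f(U)\cap g(U)| = \ell + (r-\ell) = r$ by construction, so $(f,g) \in \HH_r$, and $\#\{u: f(u)=g(u)\} \ge \ell$, so $(f,g) \in \bigcup_{k\ge \ell} \HH_{r,k}$. The total number of such tuples equals the right-hand side of \eqref{meno}.

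Next I would check that the assignment $(f,(U_0,U_1,U_2),g_0,g_1,g_2) \mapsto (f,g)$ is surjective onto $\bigcup_{k=\ell}^r \HH_{r,k}$. Given $(f,g) \in \HH_{r,k}$ with $k \ge \ell$, pick any $U_0$ of size $\ell$ inside the set $\{u : f(u)=g(u)\}$; define $U_1 := \{u \in U\setminus U_0 : g(u) \in f(U)\}$, which has size $r - \ell$ because $|f(U)\cap g(U)| = r$ and $g$ already accounts for $\ell$ elements of this intersection via $U_0$; set $U_2 := U \setminus (U_0 \cup U_1)$, of size $m-r$. Then $g|_{U_1}$ lands in $f(U)\setminus f(U_0)$ (injectivity of $g$ forces disjointness from $g(U_0)=f(U_0)$), and $g|_{U_2}$ lands in $V\setminus f(U)$. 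So the pair $(f,g)$ appears in the image, which proves $|\bigcup_{k=\ell}^r \HH_{r,k}| \le (n)_m \binom{m}{\ell,r-\ell,m-r} (m-\ell)_{r-\ell}(n-m)_{m-r}$, as required.

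There is no real obstacle here; the only delicate point is making sure the bookkeeping of sizes is consistent, in particular that $|U_1|=r-\ell$ (so that the count $(m-\ell)_{r-\ell}$ is the right one) and that $g_1$ takes values in $f(U)\setminus f(U_0)$ rather than in all of $f(U)$, which is what keeps the bound from being loose by an extra factor. In fact the argument makes it clear that a pair $(f,g) \in \HH_{r,k}$ is counted exactly $\binom{k}{\ell}$ times, so one actually obtains the sharper identity $\sum_{k=\ell}^r \binom{k}{\ell}|\HH_{r,k}| = (n)_m \binom{m}{\ell,r-\ell,m-r}(m-\ell)_{r-\ell}(n-m)_{m-r}$; the stated inequality follows immediately since $\binom{k}{\ell}\ge 1$ for $k\ge\ell$.
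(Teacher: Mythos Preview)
Your proof is correct and follows essentially the same overcounting argument as the paper: choose $f$, partition $U$ into blocks of sizes $\ell$, $r-\ell$, $m-r$, force $g$ to agree with $f$ on the first block, inject the second block into $f(U)$ minus the first image, and inject the third block outside $f(U)$. Your additional remark that each $(f,g)\in\HH_{r,k}$ is counted exactly $\binom{k}{\ell}$ times, yielding the identity $\sum_{k=\ell}^r \binom{k}{\ell}|\HH_{r,k}|$ equal to the right-hand side, is a nice sharpening not made explicit in the paper.
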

\begin{proof}
We estimate the size of $\bigcup_{k=\ell}^r \HH_{r,k}$ as follows.
We first select $f$ in $\II_{U,V}$ in $(n)_m$ ways.
Then we select a partition $U$ into 3 sets
$U_1$, $U_2$, $U_3$ with $\ell$, $r-\ell$ and $m-r$ elements respectively.
This can be done in $\binom{m}{\ell, r - \ell, m - r}$ ways.
We then pick
$g_1 : U_1 \to f U$ such that $g(u) := f(u)$ for all $u$ in $U_1$.
This choice is unique.
We then pick an injection $g_2: U_2 \to f(U) \backslash f(U_1)$.
This can be done in $(m-\ell)_{r-\ell}$ ways. 
Finally, we pick an injection $g_3 : U_3 \to V \backslash f(U)$.
This can be done in $(n-m)_{m-r}$ ways.
We notice that $g = \bigcup_{j=1}^3 g_j$ is a function such that
$(f, g) \in \bigcup_{k = \ell}^r \HH_{r, k}$.
Actually any such pair $(f, g)$ is picked {\em at least} once using the above procedure. 
(Indeed, for any $(f,g)$ in $\bigcup_{k = \ell}^r \HH_{r, k}$ we can
find $U_1, U_2, U_3$ as above.)
Hence the size of $\bigcup_{k = \ell}^r \HH_{r, k}$ is at least the
size of the possible selections via the procedure above.
\end{proof}

\begin{remark}
\eqref{meno} holds with equality when $\ell=0$.
Indeed, the right-hand side of \eqref{meno} 
reduces to the right-hand side of \eqref{meno0}  when $\ell=0$.
\end{remark}

\subsubsection{\bfseries Estimates of sizes of connected component classes}
\label{SiSi}
In this section we compute or estimate several quantities related to
the sizes of the component classes $\CC_{j,k}(f,g)$ defined in \eqref{compcl}.
By Lemma \ref{classlem} we must have $k=j$ or $k=j+1$.
Recall the graph $\TT(f,g)$ from \eqref{Tgraph} with left Vertices $\LL$
and right vertices $\RR(f,g)$ as in \eqref{LeftV} and  \eqref{RightV}.
\begin{lemma}
\label{escc}
Let $f, g \in  \II_{U,V}$. Set $\CC_{j,k}\equiv \CC_{j,k}(f,g)$, 
$\RR \equiv \RR(f,g)$. Then
\\
(i)
\begin{align}
|\CC| &= \sum_{j,k} |\CC_{j,k}| =  \sum_j (|\CC_{j,j}| + |\CC_{j,j+1}|),
\label{X1}
\\
|\LL| &= \sum_{j,k} j |\CC_{j,k}|
= \sum_j  (j |\CC_{j,j}| + j |\CC_{j,j+1}|) = \binom{m}{2}.
\label{X2}
\end{align}
(ii) If $|f(U)\cap g(U)|=r$ then
\begin{align}
 |\RR| = \sum_{j,k} k |\CC_{j,k}|
&= \sum_j  (j |\CC_{j,j}| + (j+1) |\CC_{j,j+1}|) 
= 2 \binom{m}{2} - \binom{r}{2},
\label{X3}
\\
\bigg| \bigcup_{j \ge 1} \CC_{j,j+1}\bigg| &= \binom{m}{2}-\binom{r}{2}.
\label{X4}
\end{align}
(iii) If, in addition to the assumption of (ii), 
the set $\{u \in U:\, f(u)=g(u)\}$ has size $\ell$, then
\begin{align}
\binom{\ell}{2} \le \left| \CC_{1,1} \right| 
&\le \binom{\ell}{2}+ \frac{1}{2}(r-\ell),
\label{X5}
\\
\bigg| \bigcup_{j \ge 2} \CC_{j,j}\bigg|  
& \le \frac12\left\{ \binom{r}{2} - \binom{\ell}{2} \right\}.
\label{X6}
\end{align}
\end{lemma}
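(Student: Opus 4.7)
Throughout, recall that in $\TT(f,g)$ the degree of every left Vertex $e \in \LL$ is exactly $2$ (its two neighbors are $f(e)$ and $g(e)$) unless $f(e)=g(e)$, in which case the two Edges coincide and $e$ has degree $1$; and each right Vertex $e' \in \RR(f,g)$ has degree $1$ or $2$, being $2$ precisely when $e' \in f(\PP_2(U)) \cap g(\PP_2(U))$ with $f^{-1}(e')\neq g^{-1}(e')$.

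Part (i) is immediate: the first sum in \eqref{X1} and \eqref{X2} simply partitions the Vertex set $\LL \cup \RR(f,g)$ according to connected components, and the restriction $k \in \{j,j+1\}$ is Lemma \ref{classlem}. Since $|\LL|=|\PP_2(U)|=\binom{m}{2}$, we get \eqref{X2}.

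For part (ii), I would compute $|\RR(f,g)|$ by inclusion-exclusion. Since $f,g$ are injections, $|f(\PP_2(U))|=|g(\PP_2(U))|=\binom{m}{2}$, so it remains to show $|f(\PP_2(U)) \cap g(\PP_2(U))|=\binom{r}{2}$. This is the key identity of the part: an edge $e'=\{v_1,v_2\} \in \PP_2(V)$ belongs to both $f(\PP_2(U))$ and $g(\PP_2(U))$ if and only if $v_1,v_2 \in f(U)\cap g(U)$, and the latter set has size $r$. Then \eqref{X3} follows by counting $k|\CC_{j,k}|$, and \eqref{X4} follows by subtracting \eqref{X2} from \eqref{X3} (since $k-j \in \{0,1\}$, that difference telescopes to $\sum_j |\CC_{j,j+1}|$).

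Part (iii) is the one requiring real work. For \eqref{X5}, I would first characterize $\CC_{1,1}$: a component $\{e,e'\}\in \CC_{1,1}$ corresponds bijectively to an edge $e \in \PP_2(U)$ with $f(e)=g(e)$. Writing $e=\{u_1,u_2\}$, the condition $\{f(u_1),f(u_2)\}=\{g(u_1),g(u_2)\}$ splits into (a) the ``diagonal'' case $f(u_i)=g(u_i)$, $i=1,2$, giving exactly $\binom{\ell}{2}$ such edges via pairs inside the fixed set $F:=\{u:f(u)=g(u)\}$; and (b) the ``swap'' case $f(u_1)=g(u_2)$, $f(u_2)=g(u_1)$ with $u_1,u_2 \notin F$ (else injectivity of $g$ forces $u_1=u_2$). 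The swap case pairs up elements of $f(U)\cap g(U)$ outside the $\ell$ diagonal values, of which there are $r-\ell$; hence it contributes at most $\lfloor(r-\ell)/2\rfloor$ edges, giving \eqref{X5}.

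For \eqref{X6}, I would count the intersection right Vertices $f(\PP_2(U))\cap g(\PP_2(U))$ (size $\binom{r}{2}$ by part (ii)) in two ways, using the degree analysis above. Each $\CC_{1,1}$-component contributes exactly $1$ such Vertex; each $\CC_{j,j+1}$-component contributes $j-1 \ge 0$ such Vertices (its two degree-$1$ right Vertices lie outside the intersection); and each $\CC_{j,j}$-component with $j\ge 2$ contributes $j \ge 2$ intersection right Vertices. Dropping the nonnegative $\CC_{j,j+1}$-term, one obtains
\[
2\,\bigg| \bigcup_{j\ge 2} \CC_{j,j}\bigg| \;\le\; \sum_{j\ge 2} j\,|\CC_{j,j}| \;\le\; \binom{r}{2} - |\CC_{1,1}| \;\le\; \binom{r}{2} - \binom{\ell}{2},
\]
using the lower bound of \eqref{X5} in the last step. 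This yields \eqref{X6}.

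The main obstacle I anticipate is the careful bookkeeping in part (iii): one must resist double-counting in the ``swap'' enumeration for \eqref{X5} and correctly identify which right Vertices lie in the intersection of images. All the other claims reduce to elementary inclusion-exclusion and the degree/component analysis inherited from Lemma \ref{classlem}.
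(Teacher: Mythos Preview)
Your proposal is correct and follows essentially the same approach as the paper: parts (i)--(ii) via inclusion--exclusion and Lemma~\ref{classlem}, the diagonal/swap dichotomy for \eqref{X5}, and for \eqref{X6} the same counting of degree-$2$ right Vertices (what the paper calls $\RR_2(f,g)$) combined with the lower bound of \eqref{X5}. Your phrasing of \eqref{X6} via the exact decomposition $\binom{r}{2}=|\CC_{1,1}|+\sum_{j\ge 1}(j-1)|\CC_{j,j+1}|+\sum_{j\ge 2}j|\CC_{j,j}|$ is slightly more explicit than the paper's equivalence-class language, but the argument is the same.
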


\begin{proof}
Since the $\CC_{j,k}$ are pairwise disjoint with union $\CC$, and since,
by Lemma \ref{classlem}, $k$ can only be $j$ or $j+1$,
\eqref{X1} follows.
Since different components do not share any vertices, we have that
that $\sum_{j,k} j |\CC_{j,k}|$  is the total number $|\LL|$ of left Vertices
which, by the definition of $\LL$ in \eqref{LeftV},
is $\binom{m}{2}$, showing \eqref{X2}.
Similarly, $\sum_{j,k} k |\CC_{j,k}|$ is the total number $|\RR|$ 
of right Vertices. But $\RR$ is the union of $f(\LL)$ with $g(\LL)$,
hence \eqref{X3} follows by inclusion-exclusion, along with the 
fact that 
\[
f(\LL) \cap g(\LL) = \PP_2(f(U)\cap g(U)).
\]
\eqref{X4} is obtained by subtracting \eqref{X2} from \eqref{X3}.

To prove \eqref{X5} we argue as follows.
$\CC_{1, 1}$ contains all sets $\{e, e'\}$ such that
$e' = f(e) = g(e)$. Let $L = \{u \in U : f(u) = g(u)\}$ and
$e = \{x, y\}$ where $x, y \in U$.
Then we have that $f(\{x, y\}) = g(\{x, y\})$ if and only if either
$f(x) = g(x)$ and $f(y) = g(y)$ or $f(x) =  g(y)$ and $f(y) = g(x)$.
Picking $e = \{x, y\} \in \PP_2(L)$ guarantees that $f(e) = g(e)$.
The condition $f(x) = g(y)$ and $f(y) = g(x)$ can be satisfied by
at most $\frac{1}{2} (r - \ell)$ ways. This is due to the fact that
both $x$, $y$ must not be in $L$ and $y = f^{-1}(g(x))$ and therefore
the number of equivalence classes in $f(U) \cap g(U) \setminus L$ with
$2$ elements is an upper bound for the ways we can pick $x, y$
satisfying $f(x) = g(y)$ and $f(y) = g(x)$.
\\
Finally, for \eqref{X6},
we argue as follows.
The set $\bigcup_{j \ge 2} \CC_{j,j}$ contains connected components
that are cycles in $\TT$.
We map this set injectively into a set of equivalence classes.
A right Vertex of any cycle is necessarily an element of the set
\[
\RR_2(f,g) 
= \{e \in f(\LL) \cap g(\LL): f^{-1}(e) \neq g^{-1}(e)\}.
\]
Call two elements of $\RR_2(f,g)$ equivalent if they are both
Vertices of the same connected component. Also, each cycle corresponds
to a component with at least two elements of $\RR_2(f,g)$. 
But there can be at most
$\frac12 |\RR_2(f, g)| = \frac12 \{\binom{r}{2} - \binom{\ell}{2}\}$ equivalence
classes with at least two elements in $\RR_2(f, g)$ when $(f, g)$ are in
$\HH_{r, \ell}$.
\end{proof}

\subsubsection{\bfseries Assembling the pieces}
\label{SSSsubsec}
We return to the ratio $\E N^2/(\E N)^2$ and establish a 
non-asymptotic upper bound.
\begin{proposition}
\label{esoteric}
Let $X, Y$ be independent $G(U,p)$, $G(V,1/2)$ random graphs
where $U, V$ are sets of sizes $m,n$, respectively.
Define
\begin{equation}
\label{SSS}
\SSS := \frac{1}{(n)_m^2} \sum_{r=0}^m 
2^{\binom{r}{2}} \sum_{(f,g) \in \HH_r} (\pmax)^{\binom{m}{2}-|\CC(f,g)|},
\end{equation}
where $\widehat p = \max(p,1-p)$, where $\HH_r$ is given by \eqref{Hrdef},
and where $|\CC(f,g)|$ is the number of connected components of
the auxiliary edge graph $\EE(f,g)$.
Then
\[
\frac{\E N^2}{(\E N)^2} \le \SSS.
\]
\end{proposition}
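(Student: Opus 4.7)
The plan is to start from the exact expression \eqref{N2bis} for $\E N^2$, combine it with $\E N = (n)_m 2^{-\binom{m}{2}}$ from \eqref{ENone}, and then squeeze out the extra factors of $2^{\binom{m}{2}}$ exactly, bounding only the $p$-dependent pieces. Concretely,
\[
\frac{\E N^2}{(\E N)^2}
= \frac{2^{2\binom{m}{2}}}{(n)_m^2}\sum_{f,g\in \II_{U,V}} \prod_{j,k}\tau_{j,k}^{|\CC_{j,k}(f,g)|}.
\]
By Lemma \ref{classlem}, the only nonzero contributions come from $k=j$ and $k=j+1$. Writing $\tau_{j,j}=(p^j+(1-p)^j)2^{-j}$, $\tau_{j,j+1}=(p^j+(1-p)^j)2^{-j-1}$ and extracting the powers of $2$ gives
\[
\prod_{j,k}\tau_{j,k}^{|\CC_{j,k}(f,g)|}
= \Bigl(\prod_{j\ge 1}(p^j+(1-p)^j)^{|\CC_{j,j}|+|\CC_{j,j+1}|}\Bigr)\cdot 2^{-\sum_j (j|\CC_{j,j}|+(j+1)|\CC_{j,j+1}|)}.
\]
The exponent of $2$ is exactly $|\RR(f,g)|=2\binom{m}{2}-\binom{r}{2}$ by \eqref{X3}, where $r=|f(U)\cap g(U)|$. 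Hence the $2^{2\binom{m}{2}}$ out front cancels perfectly and leaves a clean factor $2^{\binom{r}{2}}$, which is precisely the factor appearing in the definition \eqref{SSS} of $\SSS$.

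Next I would handle the $p$-dependent product via the elementary bound
\[
p^j+(1-p)^j \le \pmax^{\,j-1}, \qquad j\ge 1,
\]
valid because $p^j+(1-p)^j \le \pmax^{j-1}(p+(1-p))=\pmax^{j-1}$. Raising this to the power $|\CC_{j,j}|+|\CC_{j,j+1}|$ and multiplying over $j\ge 1$ produces
\[
\prod_{j\ge 1}(p^j+(1-p)^j)^{|\CC_{j,j}|+|\CC_{j,j+1}|}
\le \pmax^{\sum_j (j-1)(|\CC_{j,j}|+|\CC_{j,j+1}|)}.
\]
Using \eqref{X1} and \eqref{X2}, the exponent equals $\binom{m}{2}-|\CC(f,g)|$, which is exactly the exponent of $\pmax$ appearing in the definition of $\SSS$.

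Putting the two pieces together yields the pointwise inequality
\[
2^{2\binom{m}{2}}\prod_{j,k}\tau_{j,k}^{|\CC_{j,k}(f,g)|} \le 2^{\binom{r}{2}}\pmax^{\binom{m}{2}-|\CC(f,g)|}
\]
for every pair $(f,g)\in\HH_r$. Summing over $r$ and over $(f,g)\in\HH_r$, and dividing by $(n)_m^2$, gives precisely $\E N^2/(\E N)^2 \le \SSS$. No obstacle is really conceptual here; the only thing that requires care is the exponent bookkeeping, and in particular verifying that the $k=j+1$ components carry the extra power of $2$ responsible for producing $-\binom{r}{2}$ via \eqref{X3}. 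The substantive work — showing that $\SSS$ itself tends to $1$ — is deferred to Proposition \ref{S1part}; the content of the present proposition is simply to absorb the $p$-dependence into a single factor $\pmax^{\binom{m}{2}-|\CC(f,g)|}$ so that later estimates can be made independent of $p$.
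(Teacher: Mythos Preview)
Your proof is correct and essentially identical to the paper's. The only cosmetic difference is ordering: the paper bounds $\tau_{j,k}\le \pmax^{j-1}2^{-k}$ in one step and then separates the two exponents, whereas you first factor out the powers of $2$ exactly and then bound $p^j+(1-p)^j\le \pmax^{j-1}$; both routes use \eqref{X1}, \eqref{X2}, \eqref{X3} in the same way and arrive at the same inequality.
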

\begin{proof}
Since
\[
\tau_{j,k} = (p^j+(1-p)^j) 2^{-k} \le \pmax^{j-1} 2^{-k},
\]
expression \eqref{N2bis} for $\E N^2$ gives
\begin{align*}
\E N^2 &\le \sum_{f,g \in \II_{U,V}} \prod_{j,k}
\left(\pmax^{j-1} 2^{-k}\right)^{|\CC_{j,k}(f,g)|}
\\
&= \sum_{f,g \in \II_{U,V}} 
(\pmax)^{\sum_{j,k} (j-1) |\CC_{j,k}(f,g)|}
\times 2^{-\sum_{j,k} k |\CC_{j,k}(f,g)|}.
\end{align*}
The exponent of $\pmax$ equals
\[
\sum_{j,k} j |\CC_{j,k}(f,g)|  - \sum_{j,k} |\CC_{j,k}(f,g)| 
= \binom{m}{2} - |\CC(f,g)|,
\]
from \eqref{X2} and \eqref{X1}.
Identity \eqref{X3} tells us that the exponent of $2^{-1}$ equals
\[
\sum_{j,k} k |\CC_{j,k}(f,g)| = 2 \binom{m}{2} - \binom{r(f,g)}{2},
\]
where 
\[
r(f,g) = |f(U)\cap g(U)|.
\]
This leads to
\begin{equation*}
\E N^2  \le
2^{-2\binom{m}{2}}
\sum_{f,g \in \II_{U,V}}                                                          
(\pmax)^{\binom{m}{2}-|\CC(f,g)|} \times 2^{\binom{r(f,g)}{2}}.
\end{equation*}
Dividing this by the square of $\E N = (n)_m e^{-\binom{m}{2}}$ we have 
\[
\frac{\E N^2}{(\E N)^2} \le
\frac{1}{(n)_m^2} \sum_{f,g \in \II_{U,V}} 
(\pmax)^{\binom{m}{2}-|\CC(f,g)|} 2^{\binom{r(f,g)}{2}}.
\]
Since, by definition. $\HH_r$ is the set of pairs $(f,g)$
of injections from $U$ into $V$ such that $r(f,g)=r$,
we immediately have that the right-hand side of the last display is
$\SSS$.
\end{proof}

The proof of Theorem \ref{thm1}(I) will be complete
if we show that
\begin{equation}
\label{Sdone}
\limsup_{n \to \infty} \SSS \le 1.
\end{equation}
To achieve this, fix $0<c<1$ and write
\begin{equation}
\label{Ssplit}
\SSS = \SSSone + \SSStwo,
\qquad \SSSone =: \frac{1}{(n)_m^2} \sum_{0\le r \le cm}
2^{\binom{r}{2}} \sum_{(f,g) \in \HH_r} (\pmax)^{\binom{m}{2}-|\CC(f,g)|}.
\end{equation}

We now prove that, under the assumptions for phase I, 
the upper bound $\SSS$ is asymptotically below 1.

\begin{proposition}
\label{S1part}
Let $c$ be fixed, $1/2<c<1$.
Define
With $m=m(n) = \left\lfloor 2 \log_2 n+1 - \frac{C_n}{\log n} \right\rfloor$, 
where $C_n \to \infty$ such that $C_n=o(\log n)$, we have
\begin{enumerate}[\rm (i)]
\item
$\limsup_{n \to \infty} \SSSone = 1.$
\item
$\limsup_{n \to \infty} \SSStwo = 0.$
\end{enumerate}
\end{proposition}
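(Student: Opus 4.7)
The plan is to refine the partition $\II_{U,V}\times\II_{U,V}=\bigcup_r \HH_r$ into $\HH_r=\bigcup_{\ell=0}^r \HH_{r,\ell}$, bound $\binom{m}{2}-|\CC(f,g)|$ pointwise on each $\HH_{r,\ell}$ via Lemma \ref{escc}, and dominate $|\HH_{r,\ell}|$ via Lemma \ref{cardHrl}. Concretely, combining \eqref{X4}, \eqref{X5}, and \eqref{X6} yields, for $(f,g)\in\HH_{r,\ell}$, the pointwise lower bound
\[
\binom{m}{2}-|\CC(f,g)|\ \ge\ \frac{(r-\ell)(r+\ell-3)}{4},
\]
so $(\pmax)^{\binom{m}{2}-|\CC(f,g)|}\le \pmax^{(r-\ell)(r+\ell-3)/4}\le 1$.

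For part (i), the $r=0$ term will furnish the full limit $1$. Indeed, when $r=0$ the sets $f(U)$ and $g(U)$ are disjoint, so every right Vertex of $\TT(f,g)$ has degree $1$ and every left Vertex has degree $2$; hence each component is a two-Edge path carrying exactly one element of $\LL$, giving $|\CC(f,g)|=\binom{m}{2}$, $\pmax$-factor equal to $1$, and contribution $|\HH_0|/(n)_m^2=(n-m)_m/(n)_m\to 1$ because $m=O(\log n)$. For $1\le r\le cm$ with $c\in(1/2,1)$ fixed, I would use $(n-m)_{m-r}/(n)_m\lesssim n^{-r}$, the bound $\pmax\le 1$, and Lemma \ref{cardHr} to dominate the sum by a constant multiple of $\sum_{r\ge 1}\binom{m}{r}(m)_r 2^{\binom{r}{2}}n^{-r}$. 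A Stirling-style estimate bounds each summand by $(n^{c-1}(\log n)^2)^r/r!$, and the total is $\exp(n^{c-1}(\log n)^2)-1=o(1)$.

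For part (ii), the range $cm<r\le m$ is delicate: $2^{\binom{r}{2}}$ is no longer killed by $n^{-r}$, and the $\pmax$-attenuation becomes essential. Inserting Lemma \ref{cardHrl}, the task reduces to showing
\[
\sum_{r>cm}\sum_{\ell=0}^r \binom{m}{\ell,r-\ell,m-r}(m-\ell)_{r-\ell}\frac{(n-m)_{m-r}}{(n)_m}\,2^{\binom{r}{2}}\pmax^{(r-\ell)(r+\ell-3)/4}\longrightarrow 0.
\]
Writing each summand as $e^{-\Phi_n(r,\ell)}$ via Stirling, I would show that $\Phi_n$ is essentially minimized at the corners $(r,\ell)=(m,m)$ and $(m,0)$. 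At $(m,m)$ the $\pmax$-factor is $1$ and the term reduces to $\sim 2^{\binom{m}{2}}/n^m=2^{-(1+o(1))mC_n/(2\log n)}\to 0$, exactly because the threshold $m(n)=\lfloor 2\log_2 n+1-C_n/\log n\rfloor$ gives $\binom{m}{2}-m\log_2 n=-mC_n/(2\log n)+o(1)$. At $(m,0)$ the factor $\pmax^{m(m-3)/4}$ provides stronger decay. The remaining $O(m^2)$ terms add only polylogarithmic corrections, which are absorbed by the $C_n\to\infty$ slack.

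The main obstacle is the range of $r$ close to $m$ in part (ii): the multinomial $\binom{m}{\ell,r-\ell,m-r}$, the super-exponential $2^{\binom{r}{2}}$, and the $\pmax$-attenuation are in tight balance, and only the precise asymptotic form of $m(n)$ makes the argument work. The proof must simultaneously exploit $\pmax<1$ whenever $r>\ell$, to defeat ``off-diagonal'' pairs, and the specific $2^{-mC_n/(2\log n)}$ decay along the ``diagonal'' $r=\ell=m$; it is precisely at this balance point that the two-point concentration asserted in Theorem \ref{thm1} becomes visible, and it is precisely why $p$ disappears from the phase-I threshold despite entering every intermediate bound.
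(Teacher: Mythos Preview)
Your part (i) is correct and in fact slightly cleaner than the paper's argument. Your identification of the $r=0$ term as \emph{exactly} $(n-m)_m/(n)_m\to 1$ also supplies the matching lower bound, so you really get $\limsup\SSSone=1$ rather than just $\le 1$. For the remaining terms you use the crude bound $2^{\binom{r}{2}}\le (2^{cm/2})^r\lesssim n^{cr}$ and sum an exponential series; the paper instead bounds the sum by $m$ times the maximal summand via a log-convexity argument on $a_r=2^{\binom{r}{2}}m^{2r}/n^r$. Both work.

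Your part (ii) has a genuine gap at the final step. The pointwise bound $\binom{m}{2}-|\CC(f,g)|\ge \tfrac14(r-\ell)(r+\ell-3)$ is correct (the paper immediately weakens it to $\tfrac14(r-\ell)(r-3)$), and the reduction via Lemma \ref{cardHrl} matches the paper. The problem is your claim that ``the remaining $O(m^2)$ terms add only polylogarithmic corrections, which are absorbed by the $C_n\to\infty$ slack.'' The corner term at $(r,\ell)=(m,m)$ has size $b_m:=2^{\binom{m}{2}}/n^m=e^{-\Theta(C_n)}$, while $m^2=\Theta((\log n)^2)$; hence $m^2 b_m\to 0$ would require $C_n\gg\log\log n$, contradicting the hypothesis that $C_n\to\infty$ may be arbitrarily slow. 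A polylogarithmic prefactor is \emph{not} absorbed.

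The paper closes this gap in two moves. First, on the range $r>cm$ it replaces $r-3$ by $cm-3$ in the $\pmax$-exponent, making it linear in $r-\ell$; the $\ell$-sum then becomes a genuine binomial,
\[
\sum_{\ell=0}^r \binom{r}{\ell}(m-\ell)_{r-\ell}\,\pmax^{(r-\ell)(cm-3)/4}
\ \le\ \bigl(1+m\,\pmax^{(cm-3)/4}\bigr)^m\ =\ 1+o(1),
\]
so the $\ell$-sum contributes a factor $1+o(1)$, not $\operatorname{poly}(m)$. Second, the remaining single sum $\sum_{r>cm} b_r$ with $b_r=\binom{m}{r}2^{\binom{r}{2}}n^{-r}$ is controlled by a ratio argument: one checks $b_{r+1}/b_r$ is increasing and eventually exceeds some $a>1$ (this uses $c>\tfrac12$), so the sum is bounded by a \emph{constant} multiple of its last term $b_m$. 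Thus $\SSStwo\le (1+o(1))\cdot O(1)\cdot b_m\to 0$, with no polylog loss. Your corner heuristic is morally right, but you need this geometric-decay argument, not a crude $O(m^2)\times(\text{max term})$ bound, to make it rigorous under the stated hypothesis on $C_n$.
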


\begin{proof}[Proof of (i)]
The number $|\CC(f,g)|$ of connected components of $\TT(f,g)$
cannot exceed the cardinality $\binom{m}{2}$ of $\LL$. Therefore the exponent
of $\pmax$ in \eqref{Ssplit} is nonnegative. 
Since $\pmax < 1$, we immediately obtain
\[
\SSSone \le \sum_{r\le cm} \frac{2^{\binom{r}{2}}}{(n)_m^2} |\HH_r|.
\]
We computed $\HH_r$ in Lemma \ref{cardHr}, so
\[
\SSSone 
\le 1+ \sum_{1 \le r\le cm} 
2^{\binom{r}{2}}~ \frac{\binom{m}{r} (n-m)_{m-r} (m)_r}{(n)_m}
= 1+ \sum_{1 \le r\le cm} 
2^{\binom{r}{2}}~ \frac{(n-m)_{m-r} (m)_r^2}{(n)_m r!}.
\]
Using $(m)_r \le m^r$ and $r!\ge 1$ we get
\[
\SSSone 
\le 1 + \sum_{1\le r\le\lceil cm\rceil} 2^{\binom{r}{2}}
m^{2r} \frac{(n-m)_{m-r}}{(n)_m}.
\]
Now we calculate
\begin{align*}
\frac{(n-m)_{m-r}}{(n)_m}
&= \frac{(n-m) \cdots (n-2m+r+1)}{n(n-1) \cdots (n-m+1)} \\
&\le \frac{(n-m)^{m-r}}{(n-m)^m} = \left( \frac{1}{n-m} \right)^r \\
&= \left( \frac{1}{n} \right)^r \left( \frac{1}{1 - \frac{m}{n}} \right)^r \\
&\le \frac{1}{n^r} \left( \frac{1}{1 - \frac{m}{n}} \right)^m.
\end{align*}
But the term $\left( \frac{1}{1 - \frac{m}{n}} \right)^m$ is $1 + o(1)$ as
$n \rightarrow \infty$, because we have $m = O(\log n)$. Hence, we obtain
\[
\SSSone 
\le 1 + (1 + o(1)) \sum_{1\le r \le \lceil cm \rceil} 2^{\binom{r}{2}}
\frac{m^{2r}}{n^r}.
\]
We use convexity to show that the last sum tends to $0$. Letting
\[
a_r := 2^{\binom{r}{2}} \frac{m^{2r}}{n^r},
\]
we have that $a_{r+1}/a_r = (m^2/n) 2^r$ is increasing in $r$ and so
$a_r \le  a_1 \vee a_{\lceil cm \rceil}$ for all $1 \le r \le \lceil cm \rceil$,
which gives 
$\SSSone \le 1+e^{o(1)} \lceil cm \rceil (a_1\vee a_{\lceil cm \rceil})$.
It therefore suffices to show that both 
$m a_1 \to 0$ and $m a_{\lceil cm \rceil} \to 0$.
With our choice for $m=m(n)$, we immediately have
$m a_1 = {m^3}/{n} \to 0$.
We next have
\begin{align*} 
\log_2 (m a_{\lceil cm\rceil }) 
&= \frac12 \lceil cm\rceil \big(\lceil cm\rceil-1\big)
+\big(2\lceil cm\rceil+ 1\big) \log_2 m -\lceil cm\rceil\log_2 n
\\ 
&\le \frac12 \lceil cm\rceil \big(\lceil cm\rceil-m\big)
+ \big(2\lceil cm\rceil+ 1\big) \log_2 m,
\end{align*}
where we used that, for $m=m(n)$ as in the theorem statement, $m< 2 \log_2 n+1$.
Since $c<1$, we have $\lceil cm\rceil-m \to \infty$
and since the last term in the above display converges to $\infty$ much
faster than $m^2$ we immediately obtain $\log_2 (m a_{\lceil cm\rceil }) 
\to -\infty$, as needed.
\end{proof}

\begin{proof}[Proof of (ii)]
Recall the definition \eqref{Hrldef} of $\HH_{r,\ell}$
and write $\SSStwo$ as
\[
\SSStwo = \frac{1}{(n)_m^2}
\sum_{r > cm} 2^{\binom{r}{2}} 
\sum_{\ell=0}^r \sum_{(f,g) \in \HH_{r,\ell}} \pmax^{\binom{m}{2}-|\CC(f,g)|}.
\]
An upper bound for $|\CC(f,g)$, when $(f,g) \in \HH_{r,\ell}$, is
obtained by using
\eqref{X1}, \eqref{X5}, \eqref{X6} and \eqref{X4}
of Lemma \ref{escc}:
\begin{align*}
|\CC(f,g)| &= |\CC_{1,1}(f,g)| + \sum_{j \ge 2} |\CC_{j,j}(f,g)|
+ \sum_{j \ge 1} |\CC_{j,j+1}(f,g)|
\\
&\le \left[\binom{\ell}{2} + \frac12(r-\ell)\right]
+ \left[\frac12 \binom{r}{2} -\frac12\binom{\ell}{2}\right]
+ \left[\binom{m}{2} - \binom{4}{2}\right]
\\
&= \binom{m}{2} + \frac12\binom{\ell}{2} - \frac12 \binom{r}{2} + \frac12(r-\ell)
 \le \binom{m}{2} - \frac14 (r-\ell)(r-3).
\end{align*}
We thus have
\[ 
\SSStwo \le \frac{1}{(n)_m^2}
\sum_{r > cm} 2^{\binom{r}{2}} \sum_{\ell=0}^r 
(\pmax)^{\frac14 (r-\ell) (r-3)} |\HH_{r,\ell}|.
\]
Writing $|\HH_{r,\ell}| \le \left|\bigcup_{k=\ell}^r \HH_{r, k}\right|$ 
and upper-bounding this as in \eqref{meno}, we obtain
\[
\SSStwo \le 
\sum_{r > cm} 
\frac{(n - m)_{m - r}}{(n)_m}
2^{\binom{r}{2}} \binom{m}{r}
\sum_{\ell=0}^r \binom{r}{\ell} (m-\ell)_{r-\ell}
\, (\pmax)^{\frac14 (r-\ell) (r-3)} .
\]
Since $r>cm$, the last term is upper-bounded by
$(\pmax)^{\frac14 (r-\ell) (cm-3)}$; using also
$(m-\ell)_{r-\ell} \le m^{r-\ell}$, we upper-bound the last sum
by
\[
\sum_{\ell=0}^r \binom{r}{\ell} \left(m \pmax^{(cm-3)/4}\right)^{r-\ell}
= \left(1+m \pmax^{(cm-3)/4}\right)^m
\to 1, \quad \text{ as } n \to \infty.
\]
We thus obtain
\[
\SSStwo \le (1+o(1)) \sum_{r > cm}
\frac{(n - m)_{m - r}}{(n)_m} 
\binom{m}{r}
2^{\binom{r}{2}}
\le (1+o(1)) \sum_{r > cm} \frac{1}{n^r} \binom{m}{r}
2^{\binom{r}{2}},
\]
where we used the facts that $(n - m)_{m - r} \le n^{m-r}$
and that $\lim_{n \to \infty} (n)_m/n^m =1$.
We use a convexity argument again.
Consider
\[
b_r = b_r(n) = \frac{1}{n^r} \binom{m}{r} 2^{\binom{r}{2}},
\quad \lfloor cm \rfloor + 1 \le r \le m,
\]
as a function of $r$.
(Note the condition $r > cm \ge \lfloor cm \rfloor$ is equivalent to
$r \ge \lfloor cm \rfloor + 1$ because $r$ is integer.)
We have
\[
\frac{b_{r+1}}{b_r} = \frac1n\, \frac{m-r}{r+1}\, 2^r.
\]
Setting
$\psi (x) := \frac{x}{x+1}$,
an increasing function on $x > -1$,
we obtain 
\begin{align*}
\frac{b_{r+2}}{b_{r+1}} \bigg/ \frac{b_{r+1}}{b_r} 
= 2\, \frac{r+1}{r+2}\, \frac{m-r-1}{m-r} 
&= 2\, \psi(r+1)\, \psi(m - r - 1)
\\
& \ge 2 \psi(\lfloor cm \rfloor + 2) \psi(2) =
\frac43 \psi(\lfloor cm \rfloor + 2) > 1,
\end{align*}
for $\lfloor cm \rfloor + 1 \le r \le m - 3$ and $m$ large enough,
because $\psi(x) \to 1$ as $x \to \infty$. We deduce that
the sequence of ratios $\frac{b_{r+1}}{b_r}$ is increasing for
$\lfloor cm \rfloor + 1 \le r \le m - 1$. We notice that
\[
\frac{b_{\lfloor cm \rfloor + 2}}{b_{\lfloor cm \rfloor + 1}} =
\frac1n \frac{m - \lfloor cm \rfloor - 1}{\lfloor cm \rfloor + 1}
2^{\lfloor cm \rfloor + 1}.
\]
The term $\frac{m - \lfloor cm \rfloor - 1}{\lfloor cm \rfloor + 1}$
converges to $\frac{1 - c}{c}$. 
Moreover, $m = \lfloor 2 \log_2 n + 1
- \frac{C_n}{\log n} \rfloor \ge 2 \log_2 n$ for $n$ large enough, using the
fact that $\frac{C_n}{\log n } \to 0$ as $n \to \infty$. So, it follows that
\[
\frac{b_{\lfloor cm \rfloor + 2}}{b_{\lfloor cm \rfloor + 1}} \ge
\left(\frac{1-c}{c} + o(1)\right) \frac{1}{n} 2^{2c \log_2 n} =
\left(\frac{1-c}{c} + o(1)\right) n^{2c - 1}
\]
but the latter term converges to $\infty$ because 
$c > \frac12$. So we can pick
some $a > 1$ such that for $n$ large enough
\[
\frac{b_{\lfloor cm \rfloor + 2}}{b_{\lfloor cm \rfloor + 1}} \ge a > 1.
\]
Now using the fact that the sequence of ratios is increasing up to $m-2$
we get
\[
\frac{b_{r+1}}{b_r} \ge a,
\]
for all $\lfloor cm \rfloor + 1 \le r \le m-2$, which implies that
\[
b_r \le \left(\frac{1}{a}\right)^{m - 1 - r} b_{m - 1}.
\]
Since $m > 2 \log_2 n$, we have
\[
\frac{b_m}{b_{m-1}} = \frac{1}{n} \frac{1}{m} 2^m
> \frac{n}{m}.
\]
Therefore
\[
b_r \le \left(\frac{1}{a}\right)^{m - 1 - r} b_m, \quad
\lfloor cm \rfloor + 1 \le r \le m - 1.
\]
This implies that
\[
\sum_{r > cm} b_r = b_m + \sum_{r = \lfloor cm \rfloor + 1}^{m-1} b_r
\le b_m + b_m \sum_{r = \lfloor cm \rfloor + 1}^{m-1} \left(\frac{1}{a}\right)^{m - 1 - r}
\le b_m \left(1 + \frac{1}{1 - (1/a)}\right).
\]
So to show $\SSStwo \to 0$ it suffices to show that $b_m \to 0$. 
To this end we compute
\[
2 \log_2 b_m = m ( m - 1 ) - 2 m \log_2 n = m (m - 2 \log_2 n-1).
\]
However,
\[
m = \lfloor 2 \log_2 n+1 - \frac{C_n}{\log n} \rfloor \le
2 \log_2 n+1 - \frac{C_n}{\log n}
\]
and therefore
\[
2 \log_2 b_m \le - C_n \frac{m}{\log n}.
\]
Since $\frac{m}{\log_2 n} \to 2$ and $C_n \to \infty$, we conclude that
$2 \log_2 b_m \to - \infty$ and therefore $b_m \to 0$ which completes the proof.
\end{proof}

Proposition \ref{S1part} immediately implies that \eqref{Sdone} holds,
and so the proof of Theorem \ref{thm1} is complete.

\section{\bfseries\scshape The common subgraph problem}
Assuming now that the two vertex sets $U, V$, have the same size $n$, our goal
is to discover the size of the largest common subgraph as $n \to \infty$.
We shall again prove a phase transition phenomenon occurs, as stated in
Theorem \ref{thm2}.
We let $X=X_n$, $Y=Y_n$ be independent random graphs with 
laws $G(U,p)$, $G(V,q)$, respectively. 
We now allow $p$ and $q$ to be different probabilities, both strictly
between $0$ and $1$, but restricted in the region defined by \eqref{thespis} 
and depicted in Figure \ref{figpq}. The method is analogous to that
of the embedding problem, but the details of the analysis are more complicated.

We start with some terminology and notation.
We say that $f$ is {\em partial function} from $U$ to $V$
if there are sets $U' \subset U$ and $V' \subset V$ such that
$f$ is a function from $U'$ {\em onto} $V'$. We denote $U'$ by $\dom f$
and $V'$ by $\ran f$.
We are interested in partial functions that are also injections.
Recalling that $\II_{U',V}$ is the collection of all injections 
from $U'$ to $V$, the set
\[
\JJ_{U,V,m} := \bigcup_{U' \in \PP_m(U)} \II_{U',V}
\]
is the collection of all partial functions 
from $U$ to $V$ that are injections and have domain of size $m$.
Since there are $n \choose m$ ways to choose a subset of $U$ of size $m$
and then there are $(n)_m$ injections from the chosen set into $V$, it
follows that
\begin{equation}
\label{JJsize}
|\JJ_{U,V,m}| = \binom{n}{m} \, (n)_m = \frac{(n)_m^2}{m!}.
\end{equation}
Recall the definition of $m$-isomorphism; see Def.\ \ref{misodef}.
Defining
\begin{equation}
\label{Jf}
J_f := \I_{f (X_n^{\dom f}) = Y_n^{\ran f}}
\end{equation}
(recall that the induced subgraph on a set $A$ of a graph $\Gamma$ is denoted by $\Gamma^A$),
we see that
\[
X_n \jso{m} Y_n \iff \exists f \in \JJ_{U,V,m} ~~ J_f=1.
\]
Thus
\begin{equation} \label{NN1}
N = \sum_{f \in \JJ_{U,V,m}} 
J_f
\end{equation}
is the number of $m$-isomorphisms between the two random graphs,
and
\[
X_n \jso{m} Y_n \iff N>0.
\]
With $\tau=pq+(1-p)(1-q)$ we have, for any $f \in \JJ_{U,V,m}$,
\begin{align*}
\E J_f 
&= \P(f (X_n^{\dom f}) = Y_n^{\ran f}) 
\\
& = \P\big(X_n(e) = Y_n(f(e)) \text{ for all } e \in \PP_2(\dom f)\big)
= \tau^{\binom{m}{2}},
\end{align*}
so
\begin{equation}
\label{crito}
\E N = |\JJ_{U,V,m}|\, \E J_f
=  \binom{n}{m} \, (n)_m\, \tau^{\binom{m}{2}}.
\end{equation}
As usual, we shall sometimes be writing $X$ instead of $X_n$, etc.

\subsection{\bfseries Phase II of the common subgraph problem}
Phase II, by definition, refers to the asymptotic regime
where the probability of existence of an $m$-isomorphism
tends to $0$ as $n \to \infty$ for some sequence $m=m(n)$.
A sufficient condition is easily obtained below.

Recall, from \eqref{Wdef} and \eqref{mstar} that
$W(x)= x+ 2\lambda \log x + \frac{\lambda}{x} \log(2\pi x)$, with $\lambda
= 1/\log(1/\tau)$, and
$W(m_*(n)) = 4 \lambda \log n + 2 \lambda+1$.
Observe that $W$ is strictly increasing (and concave),
with $W(1) \ge 1$, $W'(x) \ge 1$ for all $x$ and $W(x) \to \infty$
as $x \to \infty$.

\begin{lemma}[condition for phase II of the graph isomorphism problem]
\label{ENasymptlem}
If $m(n)= \lceil m_*(n)+(C_n/\log n)\rceil$ where $C_n \to \infty$ 
and $C_n/\log n \to 0$,
then $\E N \to 0$.
\end{lemma}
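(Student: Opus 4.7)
The plan is a straightforward first-moment argument: since $N$ is a nonnegative integer-valued random variable, $\P(N>0)\le\E N$, so it suffices to show $\E N\to 0$. Starting from the exact expression \eqref{crito}, taking logarithms, and using $m=O(\log n)$ to replace $\log((n)_m)$ by $m\log n+o(1)$ and $\log m!$ by Stirling's formula $m\log m - m + \tfrac12\log(2\pi m) + O(1/m)$, together with the identity $\log\tau=-1/\lambda$ from \eqref{taujk}, yields
\[
\log\E N = 2m\log n - m\log m + m - \tfrac12\log(2\pi m) - \frac{m(m-1)}{2\lambda} + o(1).
\]

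The heart of the proof is the algebraic observation that the right-hand side coincides, up to $o(1)$, with $-\frac{m}{2\lambda}\bigl[W(m)-W(m_*(n))\bigr]$. Indeed, expanding the definition \eqref{Wdef} gives
\[
-\frac{m}{2\lambda}W(m) = -\frac{m^2}{2\lambda} - m\log m - \tfrac12\log(2\pi m),
\]
and the defining relation \eqref{mstar} gives
\[
\frac{m}{2\lambda}W(m_*(n)) = 2m\log n + m + \frac{m}{2\lambda}.
\]
Summing these two, the coefficients match exactly the previous display, so
\[
\log\E N = -\frac{m}{2\lambda}\bigl[W(m)-W(m_*(n))\bigr] + o(1).
\]

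Now invoke the hypothesis. By construction, $m \ge m_*(n) + C_n/\log n$, and since $W'(x)\ge 1$ for all admissible $x$, the mean value theorem yields $W(m) - W(m_*(n)) \ge C_n/\log n$. Coupled with the asymptotic $m \sim 4\lambda\log n$, which follows from $W(m_*(n)) = 4\lambda\log n + O(1)$ and $W(x)\sim x$ as $x\to\infty$, this gives
\[
\log\E N \le -\frac{m}{2\lambda}\cdot\frac{C_n}{\log n} + o(1) = -(2+o(1))\,C_n + o(1),
\]
which tends to $-\infty$ since $C_n\to\infty$.

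The only potentially delicate point is that the margin $C_n/\log n$ can be arbitrarily small, so one must verify that every approximation in the chain contributes a genuine $o(1)$ rather than an error growing with $n$. This is automatic, since both the Stirling remainder $O(1/m)$ and the correction $\log((n)_m) - m\log n = O((\log n)^2/n)$ are $o(1)$ uniformly in $C_n$, while the dominant term $-2C_n$ diverges on its own. Thus there is no real obstacle beyond the bookkeeping in the derivation of the key identity above.
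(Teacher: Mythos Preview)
Your proof is correct and takes essentially the same approach as the paper: both compute $\log\E N$ (the paper via an intermediate quantity $b(n)\sim\E N$), identify it as $-\frac{m}{2\lambda}\bigl[W(m)-W(m_*)\bigr]+o(1)$, and then combine the bound $W'\ge 1$ with $m/\log n\to 4\lambda$ to conclude. The only difference is cosmetic---the paper isolates $b(n)$ before taking logs, while you absorb the Stirling and $(n)_m\sim n^m$ approximations directly into an $o(1)$ term.
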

\begin{proof}
With $m=m(n)$ as in the statement, we have
\begin{equation}
\label{ENasympt}
\E N  = \frac{(n)_m \cdot (n)_m}{m!} \tau^{\frac12 m(m-1)}
\sim \frac{n^{2m}}{(m/e)^m \sqrt{2\pi m}} \tau^{\frac12 m(m-1)} =:b(n),
\end{equation}
where we took into account that $m(n) = \lceil m_*(n)+(C_n/\log n)\rceil
= O(\log n)$ (because $C_n/\log n \to 0$ and because of Remark \ref{Wrem}) and
used Stirling's approximation and that $(n)_m \sim n^m$, as $n \to \infty$.
Recalling the definition of $W$ and $m_*$ from \eqref{Wdef} and \eqref{mstar}
and doing a little algebra we obtain
\begin{equation}
\label{weealg}
- \log b(n) = \frac{1}{2\lambda} \, m\, ( W(m) - W(m_*) ),
\end{equation}
a positive quantity because $\lambda=1/\log(1/\tau)>0$, $m> m_*$,
$W(m) > W(m_*)$.
Since, for $x>0$, 
\begin{equation}
\label{Wprime}
W'(x) = 1+\frac{\lambda}{x}\left(2+\frac1x - \frac{\log(2\pi x)}{x}\right)
\ge 1+ \frac{\lambda}{x} (2-2\pi e^{-2}) \ge 1
\end{equation}
(indeed, the bracketed expression in the second term 
achieves minimum at the point $x=e^2/2\pi$ and
equals $2-2\pi{e^{-2}} > 0$ at this point),
we have 
\[
W(m)-W(m_*) \ge m-m_*
\]
and so
\[
-\log b(n) \ge  \frac{1}{2\lambda} \, m_*\, (m -m_*)
\ge \frac{1}{2\lambda} \, m_*\, \frac{C_n}{\log n}.
\]
Since $\lim_{x \to \infty} W(x)/x=1$, we have 
\begin{equation}
\label{4lambda}
\frac{m_*}{\log n} 
= \frac{W(m_*)}{\log n} \left(\frac{W(m_*)}{m_*}\right)^{-1}
= \frac{4\lambda \log n+2\lambda+1}{\log n} \left(\frac{W(m_*)}{m_*}\right)^{-1}
\to 4\lambda,
\end{equation}
and so $-\log b(n) \to \infty$ which implies that $\E N \to 0$.
\end{proof}

\subsection{\bfseries Phase I of the common subgraph problem}
We shall use the auxiliary edge graph device
in order to estimate $\E N^2$.
Recall that an edge graph is a graph whose Vertices are 
edges of $U$ or $V$, i.e.\ elements of $\PP_2(U) \cup \PP_2(V)$.

\subsubsection{\bfseries The auxiliary edge graph}
Fix $f,g \in \II_{U,V,m}$ and define the auxiliary edge graph
$\TT(f,g)$ as follows.
\begin{align*}
\LL(f,g) &= \PP_2(\dom f) \cup \PP_2(\dom g) \text{ (left Vertices)},
\\
\RR(f,g) &= \PP_2(\ran f) \cup \PP_2(\ran g) \text{ (right Vertices)}.
\end{align*}
The Vertex set of $\TT(f,g)$ is $\LL(f,g) \cup \RR(f,g)$ 
while its Edge set is
\[
\EE(f,g) = \big\{ \{e, f(e)\}:\, e \in \PP_2(\dom f) \big\}
\cup \big\{ \{e, g(e)\}:\, e \in \PP_2(\dom g) \big\}.
\]
Clearly, $\TT(f,g)$ is bipartite. 
The graph $\TT(f,g)$ is more involved than the one introduced in 
Section \ref{PH1emb} because $f, g$ are partial functions
and thus may not be defined on every element of $U$.
Note that we write $f(u)$ for the action of $f$ on a $u \in U$,
but we also write $f(e)$ for the action of $f$ on an edge $e$ of $U$.
Since $f : \dom f \to \ran f$ is a bijection, the map
$f: \PP_2(\dom f) \to \PP_2(\ran f)$ is also a bijection,
and hence $f^{-1}(e)$ is well-defined when $e \in \ran f$.
So if $\{e,e'\}$ is an Edge of $\TT(f,g)$, 
then $e'=f(e)$ (equivalently, $e=f^{-1}(e')$) 
or $e'=g(e)$  (equivalently, $e=g^{-1}(e')$).
We let
\[
\Deg_{f,g}(e) := \text{ Degree of Vertex $e$ in $\TT(f,g)$},
\]
that is, the number of Vertices $e'$ such that $\{e,e'\}$ is an Edge.
Clearly,
\[
\Deg_{f,g}(e) = 1 \text{ or } 2, 
\]
according as $f(e)=g(e)$ or $f(e) \neq g(e)$, respectively.

Let $\CC(f,g)$ be the collection of connected components of $\TT(f,g)$
and further let $\CC_{j,k}(f,g)$ be the collection of connected components
with $j$ left and $k$ right Vertices.

\begin{lemma}
\label{ccut}
(i) The set $\CC_{j,k}(f,g)$ is empty unless $j=k-1$ or $j=k$ or $j=k+1$.
\\
(ii) If $C \in \CC_{j,k}(f,g)$ with $|k-j|=1$ then $C$ is a path.
\\
(iii) If $C \in \CC_{j,j}(f,g)$ then $C$ is a path or a cycle.
\end{lemma}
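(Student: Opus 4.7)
The plan is to derive all three parts simultaneously from a single observation: every Vertex of $\TT(f,g)$ has Degree at most $2$. Once that is established, each connected component is a connected graph of maximum Degree $2$, hence a path or a cycle, and the bipartite structure forces the count of left versus right Vertices.

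First I would verify the Degree bound. For a left Vertex $e$, the Edges incident to $e$ are at most the two Edges $\{e, f(e)\}$ (if $e \in \PP_2(\dom f)$) and $\{e, g(e)\}$ (if $e \in \PP_2(\dom g)$). So $\Deg_{f,g}(e) \in \{1,2\}$, with equality to $1$ when these coincide or only one applies. For a right Vertex $e'$, since $f$ is an injection on $\dom f$, the induced map $\PP_2(\dom f) \to \PP_2(\ran f)$ is a bijection, so there is at most one $e \in \PP_2(\dom f)$ with $f(e) = e'$; likewise for $g$. Hence $\Deg_{f,g}(e') \le 2$ as well.

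Next, I would invoke the elementary fact that a connected graph in which every vertex has Degree $1$ or $2$ is either a path (exactly two Vertices of Degree $1$) or a cycle (every Vertex of Degree $2$). This immediately gives (ii) and (iii), up to the left/right counting. For (i) and the counting, I would use bipartiteness: any walk in $\TT(f,g)$ alternates between $\LL(f,g)$ and $\RR(f,g)$. A cycle therefore has equal numbers of left and right Vertices, yielding $j = k$; a path of length $j+k-1$ alternating between sides has endpoints either both on the left (giving $j = k+1$), both on the right ($k = j+1$), or one on each side ($j = k$). In every scenario $|j-k| \le 1$.

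There is essentially no hard step here; the only point that requires care is confirming the Degree bound for right Vertices, which relies on $f$ and $g$ being injections (so that preimages of an edge under the induced action on $\PP_2$ are unique). Everything else is immediate from ``connected plus max Degree $2$'' together with bipartiteness.
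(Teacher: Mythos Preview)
Your proposal is correct and follows essentially the same approach as the paper: both arguments rest on the observation that every Vertex of $\TT(f,g)$ has Degree $1$ or $2$, and then deduce that each connected component is a path or a cycle, with bipartiteness controlling $|j-k|$. The paper phrases the second step via counting equations in $j_1,j_2,k_1,k_2$ rather than invoking the ``connected plus max Degree $2$'' fact directly, but the content is the same.
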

\begin{proof}
%
Let $C \in \CC_{j, k}(f, g)$. For $d \in \{1,2\}$ denote by $j_d$, respectively
$k_d$ the number of left, respectively right, Vertices of $C$ of Degree $d$. So
\begin{equation}
\label{argu1}
j_1 + j_2 = j, \quad k_1 + k_2 = k.
\end{equation}
To each left Vertex of Degree $d$ of the bipartite graph $C$ there correspond $d$
Edges. Hence $j_1+2j_2$ is the number of Edges of $C$. It also equals $k_1+2k_2$. So
\begin{equation}
\label{argu2}
j_1 + 2 j_2 = k_1 + 2 k_2.
\end{equation}
Notice that $j_d+k_d$ is the number of Vertices of $C$ of Degree $d$.
Then \eqref{argu2} immediately gives 
\begin{equation}
\label{argu3}
j_1 + k_1 = 2 (k_2 - j_2 + k_1),
\end{equation}
that is, the number of Vertices of Degree $1$ is even.
\\
{\em Case 1:} $j_1+k_1=0$. From \eqref{argu3} we have $k_2=j_2$ and hence,
from \eqref{argu1}, $j=k$.
Thus, $C$ is a connected component with equal left and right Vertices and all
of Degree $2$. Hence $C$ is a cycle.
\\
{\em Case 2:} $j_1+k_1 > 0$. From \eqref{argu3}, $j_1+k_1 \ge 2$.
Let then $e, e'$ be distinct Vertices of $C$ Degree $1$ each.
Since $C$ is connected, there is a path $\Pi$ from $e$ to $e'$.
Every other Vertex on this path must have Degree 2. 
Since no Vertex in $\TT(f,g)$ has Degree larger than $2$, it follows that
$C=\Pi$. So $C$ itself is a path with $j_1+k_1=2$.
Since, from \eqref{argu1} and \eqref{argu2} we have
$j-k = \frac12(j_1-k_1),$
the only possible values of $j-k$ are $\pm1$ or $0$.
\end{proof}

Define, for all $j \ge 1$,
\begin{align*}
\CC_{j, j}^o (f, g) &= \{ C \in \CC_{j, j}(f, g) :\, 
\Deg_{f,g}(e) = 2 \text{ for all Vertices $e$ of $C$}\}
\\
\CC_{j, j}^* (f, g) &= \CC_{j, j}(f, g) \setminus \CC_{j, j}^o (f, g).
\end{align*}
By Lemma \ref{ccut}(iii), elements of $\CC^o_{j,j}(f,g)$ are cycles
and elements of $\CC_{j, j}^* (f, g)$ are paths. 
By Lemma \ref{ccut}(i), we write $\CC(f,g)$ as the union of
four classes,
\[
\CC(f,g) = \bigcup_{j \ge 1} \CC_{j,j+1}(f,g)
\cup \bigcup_{j \ge 1} \CC_{j+1,j}(f,g)
\cup \bigcup_{j \ge 1} \CC^*_{j,j}(f,g)
\cup \bigcup_{j \ge 1} \CC^o_{j,j}(f,g),
\]
this being a union of pairwise disjoint sets.
Moreover, by Lemma \ref{ccut}(ii), the elements of the
first three classes are paths and the elements of $\bigcup_j \CC^o_{j,j}(f,g)$ 
are cycles.
An illustration of this is in Figure \ref{fig22}.

\begin{figure}
\begin{center}
\includegraphics[width=0.9\textwidth]{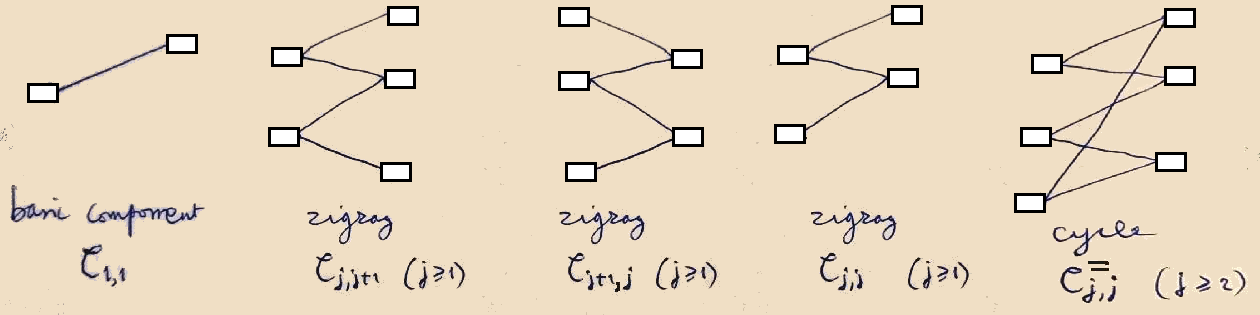}
\captionof{figure}{The connected components of $\TT(f,g)$}
\label{fig22}
\end{center}
\end{figure}

Recall that
$\tau_{j,k} \equiv \tau_{j,k}(p,q) = p^jq^k + (1-p)^j (1-q)^k$, as
in \eqref{taujk}. 
\begin{proposition}
\label{propN2tris}
Let $f, g$ be elements of $\JJ_{U,V,m}$.
If $J_f$, respectively, $J_g$,
is the indicator of the event that $f$, respectively $g$, is an $m$-isomorphism,
then
\begin{equation}
\label{N2tris} 
\E J_f J_g = \prod_{j,k \ge 1} \tau_{j,k}^{|\CC_{j,k}(f,g)|}.
\end{equation}
\end{proposition}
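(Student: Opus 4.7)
The plan is to adapt the argument from the embedding problem (cf.\ the derivation of \eqref{N2bis} from \eqref{septix}) to the present setting of partial injections and general $q$. There are three ingredients: (a) translate the event $\{J_f J_g = 1\}$ into a statement about $\TT(f,g)$; (b) split the expectation over connected components via independence; (c) evaluate each per-component probability.

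First, unpacking \eqref{Jf}, $\{J_f = 1\} = \bigcap_{e \in \PP_2(\dom f)} \{X(e) = Y(f(e))\}$, and similarly for $J_g$. Thus $J_f J_g = 1$ iff $X(e) = Y(e')$ for every Edge $\{e, e'\}$ of $\TT(f,g)$, where $e$ is the left and $e'$ the right endpoint. Because every Edge lies in a unique component $C \in \CC(f,g)$, this rewrites as $\{J_f J_g = 1\} = \bigcap_{C \in \CC(f,g)} \{X = Y \text{ on } C\}$.

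Second, since distinct components of $\TT(f,g)$ are Vertex-disjoint, the collections of random variables $\{X(e) : e \in \LL(f,g) \cap \text{Vert}(C)\}$ and $\{Y(e') : e' \in \RR(f,g) \cap \text{Vert}(C)\}$ attached to different $C$'s are pairwise disjoint. Combined with the i.i.d.\ structure of $X$ and of $Y$ and the independence $X \indep Y$, this makes the events $\{X = Y \text{ on } C\}$ mutually independent, giving
\[
\E J_f J_g = \prod_{C \in \CC(f,g)} \P(X = Y \text{ on } C).
\]

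Third, fix $C \in \CC_{j,k}(f,g)$. On its left Vertices sit $j$ i.i.d.\ Bernoulli$(p)$ variables; on its right Vertices sit $k$ i.i.d.\ Bernoulli$(q)$ variables, independent of the former. Bipartiteness together with connectedness force the condition ``$X=Y$ on $C$'' to collapse to the event that all $j+k$ Bernoullis share one common value, so $\P(X=Y \text{ on } C) = p^j q^k + (1-p)^j (1-q)^k = \tau_{j,k}$. Grouping the components by their type $(j,k)$ then yields \eqref{N2tris}. I do not foresee any real obstacle here: the argument is essentially a verbatim transcription of \eqref{septix}--\eqref{N2bis}, with bipartiteness of $\TT(f,g)$ and Vertex-disjointness of its components being the only structural facts invoked, both immediate from the definitions.
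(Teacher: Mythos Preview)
Your proof is correct and follows essentially the same approach as the paper's: both translate $J_f J_g$ into the product $\prod_{\{e,e'\}\in\EE(f,g)} \1_{X(e)=Y(e')}$, factor over connected components by Vertex-disjointness, and then use connectedness of each $C\in\CC_{j,k}(f,g)$ to reduce the event ``$X=Y$ on $C$'' to all $j+k$ Bernoullis taking a common value, yielding $\tau_{j,k}$. The paper packages step (a) slightly more abstractly via an auxiliary functional $J(G)$ on bipartite edge graphs satisfying $J(G)J(H)=J(G\cup H)$, but the substance is identical to what you wrote.
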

\begin{proof}
Let $G=(\VV_G, \EE_G)$ be a bipartite edge graph 
between $\PP_2(U)$ and $\PP_2(V)$. So
\begin{align*}
\VV_G &\subset \PP_2(U) \cup \PP_2(V),\\
\EE_G &\subset (\PP_2(U) \cap \VV_G) \times (\PP_2(V) \cap \VV_G).
\end{align*}
Define
\[
J(G) := \prod_{\{e, e'\} \in \mathcal{E}_G} \I_{X(e) = Y(e')}.
\]
If $G, H$ are two bipartite edge graphs, let 
\[
G \cup H := (\VV_G \cup \VV_H, \EE_G \cup \EE_H),
\]
and observe that
\[
J(G)J(H) = J(G \cup H).
\]
For $f \in \JJ_{U,V,m}$, if $G_f$ is the edge graph with Vertex set 
$\PP_2(\dom f) \cup \PP_2(\ran f)$
and Edge set $\big\{\{e, f(e)\}: e \in \PP_2(\dom f)\big\}$, then
\[
J_f = J(G_f). 
\]
Observing that
\[
G_f \cup G_g = \TT(f,g),
\]
we obtain
\[
J_f J_g = J(\TT(f,g)).
\]
Since $\EE(f,g)$ is the Edge set of $\TT(f,g)$,
and since we can partition $\EE(f,g)$ into sets of edges belonging to
the connected components,
we further have
\[
J(\TT(f,g)) = \prod_{\{e,e'\} \in \EE(f,g)} \1_{X(e)=Y(e')}
= \prod_{C \in \CC(f,g)} \prod_{\{e,e'\} \in \EE_C} \1_{X(e)=Y(e')}
= \prod_{C \in \CC(f,g)} J(C) 
\]
Since the random variables $J(C)$, $C \in \CC(f,g)$, are independent,
we have
\begin{multline*}
\E J(\TT(f,g)) = \prod_{C \in \CC(f,g)} \E J(C)
\\
= \prod_{j,k \ge 1} \prod_{C \in \CC_{j,k}(f,g)} \P(X(e)=X(e') \text{ for all }
\{e,e'\} \in \EE_C)
= \prod_{j,k \ge 1} \tau_{j,k}^{|\CC_{j,k}(f,g)|}.
\end{multline*}
\end{proof}

\subsubsection{\bfseries Estimates of sizes of connected component classes}
We fix $f,g \in \JJ_{U,V,m}$ throughout this section. We define
\begin{align}
Z(f,g) &:= \left\{ u \in \dom f \cap \dom g:\, f(u) = g(u) \right\},
\label{Zfg}
\\
\ZZ(f,g) &:= \left\{ e \in \PP_2(\dom f) \cap \PP_2(\dom g) :\, 
f(e) = g(e) \right\},
\nonumber
\\
\LL_1(f,g) &:= \{e \in \LL(f,g):\, \Deg_{f,g}(e)=1\},
\nonumber
\\
\RR_1(f,g) &:= \{e \in \RR(f,g):\, \Deg_{f,g}(e)=1\}.
\nonumber
\end{align}  
Since $f, g$ won't change in this section,
we write $|\CC|$,  $|\CC_{j,k}|$, $|\LL|$, etc.,
instead of $|\CC(f,g)|$, $|\CC_{j,k}(f,g)|$, $|\LL(f,g)|$, etc. 

\begin{lemma}
\label{sigmaident}
\begin{align}
|\LL| &= 
\sum_{j\ge 1} (j |\CC_{j,j}| + j |\CC_{j,j+1}| + (j+1) |\CC_{j+1,j}| )
= 2 \binom{m}{2} - \binom{|Z|}{2},
\label{left}
\\
|\RR|&=\sum_{j\ge 1} (j |\CC_{j,j}| + j |\CC_{j+1,j}| + (j+1) |\CC_{j,j+1}| )
= 2 \binom{m}{2} - \binom{|\ZZ|}{2}.
\label{right}
\\
|\LL_1|&=\sum_{j\ge 1} (|\CC_{j,j}^*| + 2 |\CC_{j+1,j}|)
= |\ZZ| +  2 \binom{m}{2} - 2 \binom{|Z|}{2},
\label{left1}
\\
|\RR_1|&=\sum_{j\ge 1} (|\CC_{j,j}^*| + 2 |\CC_{j,j+1}|)
= |\ZZ| +  2 \binom{m}{2} - 2 \binom{\ZZ}{2}.
\label{right1}
\end{align}
\end{lemma}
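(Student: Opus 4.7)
The plan is to prove each identity in \eqref{left}--\eqref{right1} via two independent counts: the first equality in each identity comes from summing Vertex contributions over the component classes supplied by Lemma \ref{ccut}, and the second equality from inclusion-exclusion on $\LL$ and $\RR$ together with an explicit characterization of Degree-$1$ Vertices.

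For the component-sum equalities I would use Lemma \ref{ccut} to partition $\CC(f,g)$ into the four disjoint classes $\CC_{j,j+1}, \CC_{j+1,j}, \CC^*_{j,j}, \CC^o_{j,j}$ (for $j\ge 1$) and record, for each class, how many left, right, Degree-$1$ left, and Degree-$1$ right Vertices each component contains. A path in $\CC_{j,j+1}$ must have both endpoints on the right (else the right-left imbalance could not equal $+1$), so it contributes $j$ left Vertices all of Degree $2$ and $j+1$ right Vertices of which exactly two are of Degree $1$; the class $\CC_{j+1,j}$ is the mirror image. A path in $\CC^*_{j,j}$ has one endpoint on each side, contributing $j$ left and $j$ right Vertices with exactly one of each being of Degree $1$. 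A cycle in $\CC^o_{j,j}$ contributes $j$ left and $j$ right Vertices, all of Degree $2$. Summing these contributions and using $|\CC_{j,j}|=|\CC^*_{j,j}|+|\CC^o_{j,j}|$ yields the first equality in each of \eqref{left}--\eqref{right1}.

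For the closed-form expressions I would apply the identity $\PP_2(A)\cap\PP_2(B)=\PP_2(A\cap B)$ and inclusion-exclusion to obtain $|\LL|$ and $|\RR|$ directly from their defining unions. For the Degree-$1$ sets I would partition $\LL_1$ into two disjoint classes: edges $e$ belonging to exactly one of $\PP_2(\dom f)$ and $\PP_2(\dom g)$ (so only one candidate Edge among $\{e,f(e)\},\{e,g(e)\}$ exists), whose count comes from the symmetric difference $\PP_2(\dom f)\triangle\PP_2(\dom g)$ via inclusion-exclusion, and edges lying in both $\PP_2(\dom f)$ and $\PP_2(\dom g)$ but with $f(e)=g(e)$ (so the two candidate Edges coincide and reduce the Degree from $2$ to $1$), whose count is exactly $|\ZZ|$. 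A symmetric argument, using the bijection $e\mapsto f(e)=g(e)$ to identify $\ZZ$ with its counterpart among the right Vertices, handles $|\RR_1|$.

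The principal observation is this Degree-$1$ characterization: a left Vertex $e$ in both $\PP_2(\dom f)$ and $\PP_2(\dom g)$ naively has Degree $2$, and only the coincidence $f(e)=g(e)$ collapses it to Degree $1$, producing the $|\ZZ|$ correction that distinguishes \eqref{left1}--\eqref{right1} from \eqref{left}--\eqref{right}. Once this collapse is properly accounted for, the remainder of the proof is routine arithmetic in combination with the two inclusion-exclusion computations underlying \eqref{left} and \eqref{right}.
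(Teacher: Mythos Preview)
Your proposal is correct and follows essentially the same approach as the paper: the first equality in each line is obtained by summing per-component Vertex counts over the classification of Lemma~\ref{ccut}, and the second by inclusion-exclusion on $\LL=\PP_2(\dom f)\cup\PP_2(\dom g)$ (respectively $\RR$) together with the decomposition $\LL_1=\ZZ\cup\bigl(\PP_2(\dom f)\,\xor\,\PP_2(\dom g)\bigr)$. Your account is in fact slightly more explicit than the paper's in justifying where the path endpoints lie in each component class.
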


\begin{proof}
(i)
The first equality in \eqref{left} is due to Lemma \ref{ccut}(i). 
The second equality follows by inclusion-exclusion.
Similarly for \eqref{right}.
\\
(ii)  
Every left Vertex of Degree 1 must belong to a component from the set $\CC_{j+1,j}$
or from the set $\CC_{j,j}^*$. Each $C \in \CC^*_{j,j}$ has exactly one left
Vertex of Degree 1; each $C \in \CC_{j+1,j}$ has exactly two left
Vertices  of Degree 1. See Figure \ref{fig22}. This proves the first equality 
in \eqref{left1}.
For the second equality note that
\[
\LL_1= \{e \in \PP_2(\dom f) \cup \PP_2(\dom g): \Deg(e)=1\}
= \ZZ
\cup (\PP_2(\dom f) \xor \PP_2(\dom g)),
\]
because Vertices in $\PP_2(\dom f) \xor \PP_2(\dom g)$ have degree 1.
Since the sets $\ZZ$ and $\PP_2(\dom f) \xor \PP_2(\dom g)$ are disjoint,
the second equality in \eqref{left1} follows.
Similarly for \eqref{right1}.
\end{proof}

\begin{corollary}
\begin{align}
&\sum_{j \ge 1} \left\{ |\CC_{j,j+1}| - |\CC_{j+1,j}|\right\} 
= \binom{d}{2}-\binom{r}{2}
\label{por1}
\\
&\sum_{j \ge 1} \left\{(j-1)|\CC_{j,j}^*| + j |\CC_{j,j}^o| 
+ (j-1)|\CC_{j,j+1}| + j |\CC_{j+1,j}| \right\} 
= \binom{r}{2} - |\ZZ|
\label{por2}
\\
&\sum_{j \ge 2} |\CC_{j, j}^o| \le \frac{1}{2} \left( \binom{r}{2} - |\ZZ| \right)
\label{por3}
\\
&
\sum_{j \ge 1} \left\{
(j-1) |\CC_{j, j}| + (j-1)|\CC_{j, j+1}| + j |\CC_{j+1, j}| \right\}
\ge \frac{1}{2} \left( \binom{r}{2} - |\ZZ| \right)
\label{por4}
\end{align}
\end{corollary}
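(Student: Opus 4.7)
The strategy is to derive all four statements from the counting identities already established in Lemma~\ref{sigmaident}, augmented by one further enumeration: the total Edge count of $\TT(f,g)$. Observe that every Edge of $\TT(f,g)$ has the form $\{e,f(e)\}$ for some $e \in \PP_2(\dom f)$ or $\{e,g(e)\}$ for some $e \in \PP_2(\dom g)$, and two such generators coincide precisely when $e \in \ZZ(f,g)$. Inclusion-exclusion therefore yields $|\EE(f,g)| = 2\binom{m}{2} - |\ZZ|$. Summing Edges component by component, using Lemma~\ref{ccut} (paths with $v$ Vertices contribute $v-1$ Edges; cycles with $v$ Vertices contribute $v$), the same quantity equals
\[
\sum_{j \ge 1} \bigl(2j|\CC_{j,j+1}| + 2j|\CC_{j+1,j}| + (2j-1)|\CC_{j,j}^*| + 2j|\CC_{j,j}^o|\bigr).
\]

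With this supplementary identity in hand, each line of the corollary follows by a short algebraic manipulation. First, \eqref{por1} drops out of \eqref{left} minus \eqref{right}: the $|\CC_{j,j}|$-terms cancel and the difference on the right-hand side rearranges to $\binom{d}{2}-\binom{r}{2}$. Second, \eqref{por2} is obtained by subtracting \eqref{right} from the Edge-count identity above; the coefficient of each component-type reduces by its $|\RR|$-weight, producing $\binom{r}{2} - |\ZZ|$ on the right. Third, \eqref{por3} follows because $\CC_{1,1}^o=\varnothing$ (a would-be $(1,1)$-cycle would require two parallel Edges between a single left and a single right Vertex, which $\TT(f,g)$ does not admit), so $\sum_{j \ge 2} j |\CC_{j,j}^o| \ge 2 \sum_{j \ge 2} |\CC_{j,j}^o|$; dropping the other nonnegative summands in \eqref{por2} and dividing by $2$ gives the bound.

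Finally, for \eqref{por4} I would substitute $|\CC_{j,j}|=|\CC_{j,j}^*|+|\CC_{j,j}^o|$ in its left-hand side and compare with \eqref{por2}: the two sums differ exactly by $\sum_{j \ge 1}|\CC_{j,j}^o|$, so \eqref{por4} reduces to the assertion that this cycle-count is at most $\tfrac12 (\binom{r}{2}-|\ZZ|)$, which is precisely \eqref{por3}. The only genuinely non-bookkeeping step in the whole plan is the observation that $\CC_{1,1}^o$ is empty, which is what allows the coefficient-$2$ lower bound in \eqref{por3} (and hence \eqref{por4}) to go through; all remaining steps are straightforward combinations of Lemmas~\ref{ccut} and~\ref{sigmaident} with the Edge-count identity.
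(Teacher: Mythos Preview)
Your proposal is correct and, for \eqref{por1}, \eqref{por3}, and \eqref{por4}, coincides with the paper's proof essentially line by line: \eqref{por1} is \eqref{left} minus \eqref{right} (up to an overall sign), \eqref{por3} uses $\CC_{1,1}^o=\varnothing$ to drop nonnegative terms from \eqref{por2}, and \eqref{por4} follows from \eqref{por2} and \eqref{por3} exactly as you describe.

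The only point where you deviate is \eqref{por2}. The paper obtains it as \eqref{right} minus \eqref{right1}, whereas you introduce the Edge-count identity $|\EE(f,g)|=2\binom{m}{2}-|\ZZ|$ and subtract \eqref{right} from it. These two routes are equivalent via the handshake relation $|\EE(f,g)| = 2|\RR| - |\RR_1|$, so your subtraction ``Edge count $-$ \eqref{right}'' is literally the same linear combination as the paper's ``\eqref{right} $-$ \eqref{right1}''. Your version has the mild advantage of not relying on the explicit form of \eqref{right1} (whose statement in Lemma~\ref{sigmaident} contains typographical slips), at the cost of introducing one extra counting identity; the paper's version stays entirely within the identities already recorded in Lemma~\ref{sigmaident}.
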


\proof
Subtracting \eqref{right} from \eqref{left} we obtain \eqref{por1}.
Subtracting \eqref{right1} from \eqref{right} we obtain \eqref{por2}.
Since $\CC_{1, 1}^o = \varnothing$, \eqref{por3} follows from \eqref{por2}.
Using \eqref{por2} and \eqref{por3} we obtain \eqref{por4}.

\begin{lemma} 
\label{lambduplem}
\begin{equation}
\label{lambdupeq}
\binom{|Z|}{2}  \le |\ZZ| \le \binom{|Z|}{2} + \frac{1}{2} (r - |Z|).
\end{equation}
\end{lemma}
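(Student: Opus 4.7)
The plan is to classify each edge $e = \{x,y\} \in \ZZ$ according to how $f$ and $g$ act on its endpoints and count the resulting types separately. Since $f$ and $g$ are injective and $f(e) = g(e)$ as unordered pairs, there are exactly two possibilities: either (a) $f(x) = g(x)$ and $f(y) = g(y)$, or (b) $f(x) = g(y)$ and $f(y) = g(x)$ with $f(x) \neq g(x)$. (The two cases collapse when one endpoint is in $Z$; a short check shows that type (b) forces $x, y \notin Z$, since $x \in Z$ would give $f(x) = g(x) \neq g(y) = f(x)$, a contradiction.)

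For the lower bound I would observe that any $e \in \PP_2(Z)$ automatically lies in $\PP_2(\dom f) \cap \PP_2(\dom g)$ and falls under case (a), so $e \in \ZZ$; this yields $|\ZZ| \geq \binom{|Z|}{2}$. The same argument shows that the number of type (a) edges equals exactly $\binom{|Z|}{2}$, because type (a) is equivalent to both endpoints lying in $Z$.

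For the upper bound the remaining task is to show that the number of type (b) edges is at most $\tfrac{1}{2}(r - |Z|)$. To each type (b) edge $\{x, y\}$ I would associate the unordered pair $\{f(x), f(y)\} \subset \ran f \cap \ran g$, which in fact lies in $(\ran f \cap \ran g) \setminus f(Z)$: indeed $f(x), f(y) \in \ran g$ because $f(x) = g(y)$ and $f(y) = g(x)$, while $f(x), f(y) \notin f(Z)$ since $x, y \notin Z$ and $f$ is injective. Since $|f(Z)| = |Z|$, the target set has size $r - |Z|$. The key claim is that these associated pairs are pairwise \emph{disjoint} across distinct type (b) edges; granting this, the number of type (b) edges is at most $\lfloor (r - |Z|)/2 \rfloor \leq \tfrac{1}{2}(r - |Z|)$, and adding the type (a) count delivers the upper bound.

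The main obstacle is establishing the disjointness claim, which I would handle by a brief case analysis. Suppose two distinct type (b) edges $e_1 = \{x_1, y_1\}$ and $e_2 = \{x_2, y_2\}$ share a value in their image pairs. If $f(x_1) = f(x_2)$, injectivity of $f$ gives $x_1 = x_2$, whence the defining relations $g(x_1) = f(y_1)$ and $g(x_2) = f(y_2)$ force $f(y_1) = f(y_2)$ and hence $y_1 = y_2$, contradicting $e_1 \neq e_2$. The remaining case $f(x_1) = f(y_2)$ gives $x_1 = y_2$, and then the type (b) relations applied to both edges yield $g(x_1) = f(y_1)$ and $g(x_1) = f(x_2)$, so $y_1 = x_2$ and again $e_1 = e_2$. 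Once disjointness is secured, the counting step is routine and the two bounds combine to give \eqref{lambdupeq}.
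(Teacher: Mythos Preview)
Your proof is correct and follows essentially the same approach as the paper: both split $\ZZ$ into the ``matching'' case $f(x)=g(x),\,f(y)=g(y)$ (yielding exactly $\PP_2(Z)$) and the ``swapping'' case $f(x)=g(y),\,f(y)=g(x)$, then bound the latter by $\tfrac12(r-|Z|)$ via the image pairs $\{f(x),f(y)\}\subset (\ran f\cap\ran g)\setminus f(Z)$. Your explicit case analysis for disjointness of these image pairs is more detailed than the paper's one-line justification, but the underlying argument is the same.
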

\begin{proof}
The first inequality is due to the inclusion $\PP_2(Z) \subset \ZZ$.
For $e = \{ x, y\} \in \ZZ$ we have 
\[
f(\{ x, y \}) = g(\{ x, y \}). 
\]
So, it follows that either
\[
f(x) = g(x) \, , \, f(y) = g(y)
\]
or
\[
f(x) = g(y) \, , \, f(y) = g(x).
\]
The first condition implies that $e \in \PP_2(Z)$ while the second condition
implies that $f(y) = f \left( g^{-1} \left( f(x) \right) \right)$ and $x, y \notin Z$.
The first condition is satisfied by all the elements of $\PP_2(Z)$.
The second condition can be satisfied by at most 
$\frac{1}{2} (r-|Z|)$ elements since $f(x), f(y)$ is a
unique pair of two distinct elements in $\ran f \cap \ran g \setminus Z'$ where
$Z' = f(Z) = g(Z)$. Therefore it follows that
\[
\left| \ZZ \right| \le \binom{ |Z| }{2} + \frac{1}{2} (r - |Z|).
\]
\end{proof}

\subsubsection{\bfseries Combinatorial estimates}
\label{seccombest}
We introduce the classes
\[
\HH_{d,r} :=\{(f,g) \in \JJ_{U,V,m} \times \JJ_{U,V,m}:\,
|\dom f\cap\dom g| = d,
|\ran f\cap\ran g| = r\},
\quad 0 \le d,r \le m,
\]
forming a partition of $\JJ_{U,V,m} \times \JJ_{U,V,m}$
and the classes
\[
\HH_{d,r,\ell} := \{(f,g) \in \HH_{d,r}:\, |Z(f,g)|=\ell\}, \quad 
0 \le \ell \le d \wedge r,
\]
forming a partition of $\HH_{d,r}$ for all $0 \le d,r \le m$.
We shall estimate the sizes of these classes. An exact expression
is available for $\HH_{d,r}$. An upper bound for $\HH_{d,r,\ell}$
is sufficient for our purposes.

\begin{lemma}[Cardinality of $\HH_{d,r}$]
\label{CARD1lem}
\begin{equation}
\label{CARD1eq}
|\HH_{d,r}|
= \binom{n}{m-d, m-d, d, n-2m+d} 
\binom{n}{m-r, m-r, r, n-2m+r} 
m!^2.
\end{equation}
\end{lemma}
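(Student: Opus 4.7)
The plan is to establish \eqref{CARD1eq} by a direct enumeration that factors the choice of $(f,g)\in\HH_{d,r}$ into three independent pieces: the ordered pair of domains, the ordered pair of ranges, and the two underlying bijections.

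First I would observe that each $f\in\JJ_{U,V,m}$ is uniquely encoded by the triple $(\dom f,\ran f,\phi_f)$, where $\phi_f:\dom f\to\ran f$ is the bijection induced by $f$; the same holds for $g$. Hence a pair $(f,g)$ corresponds bijectively to the data
\[
\bigl((\dom f,\dom g),\,(\ran f,\ran g),\,(\phi_f,\phi_g)\bigr).
\]
The constraints defining $\HH_{d,r}$, namely $|\dom f\cap\dom g|=d$ and $|\ran f\cap\ran g|=r$, depend only on the first two coordinates and impose no condition whatsoever on the bijections. Consequently the total count factors as a product of three separate counts.

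Next I would count ordered pairs $(\dom f,\dom g)$ of $m$-subsets of $U$ meeting in exactly $d$ elements. Specifying such a pair is equivalent to specifying the four pairwise disjoint sets
\[
\dom f\setminus\dom g,\quad \dom g\setminus\dom f,\quad \dom f\cap\dom g,\quad U\setminus(\dom f\cup\dom g),
\]
of sizes $m-d,\ m-d,\ d,\ n-2m+d$, respectively; this is an ordered set partition of $U$ counted by the multinomial coefficient $\binom{n}{m-d,m-d,d,n-2m+d}$. The analogous enumeration for $(\ran f,\ran g)$, with $U,d$ replaced by $V,r$, yields $\binom{n}{m-r,m-r,r,n-2m+r}$. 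Once the sets $\dom f,\ran f$ (respectively $\dom g,\ran g$) are fixed, each of $\phi_f$ and $\phi_g$ can be chosen independently in $m!$ ways.

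Multiplying the three factors gives the right-hand side of \eqref{CARD1eq}. There is no real obstacle: the identity is a clean combinatorial fact, and the only point worth emphasising is that the domain and range constraints decouple from each other and from the bijections, which is what permits the product form. (Implicitly we use $n\ge 2m-d$ and $n\ge 2m-r$; when these fail, the multinomials on the right-hand side are conventionally $0$, matching the fact that $\HH_{d,r}$ is then empty.)
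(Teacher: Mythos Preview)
Your proof is correct and follows essentially the same approach as the paper: factor the choice of $(f,g)$ into choosing the ordered pair of domains (a multinomial in $U$), the ordered pair of ranges (a multinomial in $V$), and then the two bijections ($m!$ each). The paper's argument is slightly terser but structurally identical.
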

\begin{proof}
We pick subsets $F$, $G$ of $U$ such that their intersection has $d$ elements.
This can be done in 
\begin{equation} \label{FGpick}
\binom{n}{m-d, m-d, d, n-2m+d}
\end{equation}
ways since we partition
$U$ into the $4$ disjoint sets $F \setminus G$, $G \setminus F$, $F \cap G$, $U
\setminus (F \cup G)$ of sizes $m - d$, $m - d$, $d$, $n - 2m + d$
respectively.  Similarly, there are 
\begin{equation} \label{F1G1pick}
\binom{n}{m-r, m-r, r, n-2m+r}
\end{equation} 
ways to pick $F'$, $G'$ subsets of $V$ such that they have $r$ common elements.
Finally, we can pick $f: F \rightarrow F'$ and $g: G \rightarrow G'$ in $m!^2$
ways.  
\end{proof}

\begin{lemma}[Estimate for the cardinality of $\HH_{d,r,\ell}$]
\label{CARD2lem}
\begin{equation}
\label{CARD2eq}
|\HH_{d,r,\ell}| \le \left| \bigcup_{k=\ell}^{d\wedge r} \HH_{d,r,k} \right|
\le 
|\HH_{d,r}| \binom{d \wedge r}{\ell} \frac{1}{(m)_\ell}.
\end{equation}
\end{lemma}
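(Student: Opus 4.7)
The first inequality is trivial, since $\HH_{d,r,\ell}$ lies inside the union on the right. For the second, my plan is to count marked triples $(f,g,S)$, where $S$ ranges over size-$\ell$ subsets of $Z(f,g)$. Specifically, I will set
\[
\widetilde\HH_{d,r,\ell} := \{(f,g,S):\, (f,g) \in \HH_{d,r},\ S \subset Z(f,g),\ |S|=\ell\},
\]
and observe that the projection $(f,g,S) \mapsto (f,g)$ maps $\widetilde\HH_{d,r,\ell}$ onto $\bigcup_{k \ge \ell}\HH_{d,r,k}$ with every fiber of size $\binom{|Z(f,g)|}{\ell} \ge 1$; hence it suffices to bound $|\widetilde\HH_{d,r,\ell}|$ from above.

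The core step will be a constructive bijection that enumerates $\widetilde\HH_{d,r,\ell}$ directly, paralleling the proof of Lemma \ref{CARD1lem}: (a) choose the subsets $\dom f, \dom g \subset U$ with $|\dom f \cap \dom g| = d$, giving $\binom{n}{m-d,m-d,d,n-2m+d}$ choices; (b) choose $\ran f, \ran g \subset V$ with $|\ran f \cap \ran g| = r$, giving $\binom{n}{m-r,m-r,r,n-2m+r}$; (c) choose $S \subset \dom f \cap \dom g$ with $|S|=\ell$, giving $\binom{d}{\ell}$; (d) choose an injection $h: S \to \ran f \cap \ran g$, which will serve as the common value $f|_S = g|_S$, giving $(r)_\ell$; (e), (f) freely extend $f$ and $g$ as bijections $\dom f \setminus S \to \ran f \setminus h(S)$ and $\dom g \setminus S \to \ran g \setminus h(S)$, each contributing $(m-\ell)!$. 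One checks that this recipe produces each triple in $\widetilde\HH_{d,r,\ell}$ exactly once. Comparing the product with \eqref{CARD1eq} then gives
\[
|\widetilde\HH_{d,r,\ell}| = |\HH_{d,r}|\, \frac{\binom{d}{\ell}(r)_\ell}{(m)_\ell^2}.
\]

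The final step is a small combinatorial simplification. Using the symmetry $\binom{d}{\ell}\binom{r}{\ell} = \binom{d\wedge r}{\ell}\binom{d\vee r}{\ell}$ and $(d\vee r)_\ell \le (m)_\ell$ (since $d\vee r \le m$),
\[
\binom{d}{\ell}(r)_\ell = \binom{d}{\ell}\binom{r}{\ell}\,\ell! = \binom{d\wedge r}{\ell}(d\vee r)_\ell \le \binom{d\wedge r}{\ell}(m)_\ell,
\]
so dividing by $(m)_\ell^2$ yields the claimed bound. I anticipate no real obstacle; the only point requiring care is the bijectivity of the enumerative procedure, which is transparent because all data $(\dom f,\dom g,\ran f,\ran g,S,h,\text{extensions})$ are uniquely recoverable from the triple $(f,g,S)$.
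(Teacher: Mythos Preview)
Your proof is correct and follows essentially the same constructive enumeration as the paper's, with the minor organizational difference that you package the overcounting via marked triples $(f,g,S)$ and obtain an exact count $|\widetilde\HH_{d,r,\ell}| = |\HH_{d,r}|\binom{d}{\ell}(r)_\ell/(m)_\ell^2$, whereas the paper counts procedures producing pairs $(f,g)$ directly and invokes a symmetry argument to get the $d\wedge r$. The underlying combinatorics is identical; your bookkeeping is arguably cleaner and gives a slightly sharper intermediate bound before the final simplification.
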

\begin{proof}
Observe that
$\bigcup_{k=\ell}^{d \wedge r} \HH_{d, r, k}$
is the set of all pairs of functions $(f, g) \in \HH_{d, r}$ such that 
$|Z(f, g)| \ge \ell$. 
Using the following procedure we will ensure that each pair 
$(f, g) \in \bigcup_{k=\ell}^{d \wedge r} \HH_{d, r, k}$ 
is picked at least once.

First, we pick subsets $F$, $G$ of $U$ with $d$ common elements 
and subsets $F'$, $G'$ of $V$ with $r$ common elements. 
Then we pick $f: F \rightarrow F'$ and
a subset $Z$ of $F \cap G$ with $\ell$ elements.  
Finally, we pick some
$\tilde{g}: G \setminus Z \rightarrow G' \setminus f(Z)$ 
which can be extended to $g: G \rightarrow G'$ by setting 
$g(u) = f(u)$ for $u \in G \setminus Z$.
To conclude the proof all we need to do now is to count the ways each step can
be done and multiply them.

The ways to pick $(F,G)$ and $(F',G')$ 
are as in \eqref{FGpick} and \eqref{F1G1pick}, respectively.
The bijection $f: F \to F'$ can be chosen in $m!$ ways.
The cardinality $\ell$ set $Z \subset F \cap G$ can be chosen in $\binom{d}{\ell}$ ways.
The bijection $\tilde{g}: G \setminus Z \rightarrow G' \setminus f(Z)$ 
can be chosen in $(m - \ell)!$ ways.
Multiplying these numbers together and using \eqref{CARD1eq} we obtain that the 
cardinality of $\bigcup_{k=\ell}^{d \wedge r} \HH_{d, r, k}$ is at most
\[
|\HH_{d, r}| \binom{d}{\ell} \frac{1}{(m)_{\ell}}.
\]
By symmetry arguments, we also have that the cardinality of the set is at most
\[
|\HH_{d, r}| \binom{r}{\ell} \frac{1}{(m)_{\ell}}.
\]
The minimum of these two numbers is gives what was claimed in \eqref{CARD2eq}.
\end{proof}

\begin{remark}
The second inequality in \eqref{CARD2eq}
can be further improved, but the improvement will not be used below.
\end{remark}

We need an estimate for the fraction of pairs $(f,g)$ of
partial injections from $U$ to $V$ with domain of size $m$ that
are in $\HH_{d,r}$. 


\begin{lemma} 
\label{CARD3lem}
Fix $d,r$ but let $m$ be a sequence of $n$ such that
$m=m(n) = O(\log n)$ as $n \to \infty$.
There is a sequence $\chi(n)$ such that $\chi(n) \to 1$ and 
\begin{equation}
\label{CARD3eq}
\frac{|\HH_{d,r}|}{|\JJ_{U,V,m}|^2}
= \chi(n)
\binom{m}{d} \binom{m}{r} \frac{(m)_d (m)_r}{n^{d+r}},
\end{equation}
for all $d, r$.
\end{lemma}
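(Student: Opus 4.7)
My plan is to derive the stated identity by direct algebraic manipulation of the exact formulas in Lemma \ref{CARD1lem} and \eqref{JJsize}, and then to handle the leftover asymptotic factor by estimating a product of terms of the form $1 - O(m/n)$. The regime $m = O(\log n)$ will make the total relative error $O(m^2/n)$, which vanishes, and with a bit of care one sees that the bound is uniform in $d, r$.

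First I would combine $|\JJ_{U,V,m}|^2 = (n)_m^4/m!^2$ from \eqref{JJsize} with Lemma \ref{CARD1lem} to write
\[
\frac{|\HH_{d,r}|}{|\JJ_{U,V,m}|^2} = \frac{m!^4}{(n)_m^4}\binom{n}{m-d,m-d,d,n-2m+d}\binom{n}{m-r,m-r,r,n-2m+r}.
\]
The right-hand side splits into a $d$-factor and an $r$-factor of identical shape, so it suffices to simplify one of them. Expanding the multinomial as $n!/((m-d)!^2 d!(n-2m+d)!)$ and using $(n)_m = n!/(n-m)!$ and $(n-m)_{m-d}=(n-m)!/(n-2m+d)!$, one obtains the clean identity
\[
\frac{\binom{n}{m-d,m-d,d,n-2m+d}\,m!^2}{(n)_m^2} = \binom{m}{d}(m)_d\,\frac{(n-m)_{m-d}}{(n)_m}.
\]
Applying this to both factors gives
\[
\frac{|\HH_{d,r}|}{|\JJ_{U,V,m}|^2} = \binom{m}{d}\binom{m}{r}(m)_d(m)_r\,\frac{(n-m)_{m-d}(n-m)_{m-r}}{(n)_m^2}.
\]

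It then remains to show that $\chi(n):=n^{d+r}(n-m)_{m-d}(n-m)_{m-r}/(n)_m^2$ tends to $1$, uniformly in $0\le d,r \le m$. Writing each falling factorial as a product,
\[
(n)_m = n^m \prod_{i=0}^{m-1}\Bigl(1-\tfrac{i}{n}\Bigr), \qquad (n-m)_{m-d} = n^{m-d}\prod_{j=0}^{m-d-1}\Bigl(1-\tfrac{m+j}{n}\Bigr),
\]
and analogously for $m-r$, one sees that $\chi(n)$ is a quotient of products whose every factor has the form $1 - O(m/n)$ and in which the total number of factors is at most $4m$. Taking logarithms, $|\log \chi(n)| = O(m^2/n)$, and this bound does not depend on $d$ or $r$. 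Under the hypothesis $m(n) = O(\log n)$ this is $o(1)$, so $\chi(n)\to 1$ uniformly in $d, r$, as required.

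There is no real obstacle in this argument: it is essentially bookkeeping. The only point deserving a moment of care is verifying that the $O(m^2/n)$ error estimate is uniform in $d, r\in\{0,\dots,m\}$, but this is immediate because the crude bound $|\log(1-x)|\le 2|x|$ (valid for $|x|\le 1/2$, which holds once $n$ is large since $m/n\to 0$) is applied to at most $4m$ factors regardless of $d$ and $r$.
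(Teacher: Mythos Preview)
Your proof is correct and follows essentially the same approach as the paper: factor the ratio into a $d$-part and an $r$-part via \eqref{CARD1eq} and \eqref{JJsize}, isolate the combinatorial factor $\binom{m}{x}(m)_x$, and then show the remaining factor $(n-m)_{m-x}/(n)_m$ (equivalently $(n-m)!^2/(n!(n-2m+x)!)$) is asymptotically $1/n^x$ uniformly in $x$. The only cosmetic difference is that the paper handles the asymptotic step by splitting off the product $(n-m)!^2/(n!(n-2m)!)$ and bounding it explicitly, whereas you take logarithms of the full product and use the crude $O(m^2/n)$ bound; both yield the same uniform control.
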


\begin{proof}
It follows from \eqref{CARD1eq} 
of Lemma \ref{CARD1lem} and expression \eqref{JJsize} for the size 
of $\JJ_{U,V,m}$,
\[
\frac{|\HH_{d,r}|}{|\JJ_{U,V,m}|^2} 
= h_d h_r,
\]
where
\[
h_x := \frac{\binom{n}{m-x, m-x, x, n-2m+x}}{\binom{n}{m}^2} 
= \frac{(n-m)!^2}{n! (n-2m+x)!} \binom{m}{x} (m)_x.
\]
To show \eqref{CARD3eq} it suffices to show that 
$\frac{(n-m)!^2}{n! (n-2m+x)!} \sim \frac{1}{n^x}$.
We have that
\[
\frac{(n - m)!^2}{n! (n - 2m + x)!} = \frac{(n - m)!^2}{n! (n - 2m)!}
\frac{(n - 2m)!}{(n - 2m + x)!}
\]
where
\[
\frac{(n - m)!^2}{n! (n - 2m)!} =
\prod_{k=1}^m \frac{n-2m+k}{n-m+k}.
\]
Hence we obtain
\[
\left( \frac{n - 2m + 1}{n} \right)^m \le
\frac{(n - m)!^2}{n! (n - 2m)!} \le
\left( \frac{n - m}{n - m + 1} \right)^m.
\]
However, both the upper and lower bounds of $\frac{(n - m)!^2}{n! (n - 2m)!}$
tend to $1$ because $m = O(\log n)$, hence $\frac{(n - m)!^2}{n! (n - 2m)!}
\rightarrow 1$. Moreover, we have that
\[
\frac{(n - 2m)!}{(n - 2m + x)!} = \prod_{k=1}^x \frac{1}{n - 2m + k},
\]
but the latter is bounded by
\[
\frac{1}{n^x} \left(\frac{n}{n - 2m + x}\right)^x \le \frac{(n - 2m)!}{(n - 2m + x)!} \le
\frac{1}{n^x} \left(\frac{n}{n - 2m + 1}\right)^x.
\]
Both the $\left(\frac{n}{n - 2m + x}\right)^x$ and $\left(\frac{n}{n - 2m + 1}\right)^x$ tend to $1$
and hence
\[
\frac{(n - 2m)!}{(n - 2m + x)!} = (1 + o(1)) \frac{1}{n^x}
\]
which concludes the proof.
\end{proof}

\subsubsection{\bfseries A correlation upper bound}
Keeping in mind that we will apply inequality \eqref{CS}, we will 
eventually show that $\E N^2/(\E N)^2 \to 1$ as $n \to \infty$, under the
conditions of Theorem \ref{thm2}.
We have
\[
\frac{\E N^2}{(\E N)^2} =
\frac{1}{|\JJ_{U,V,m}|^{2}} \sum_{f,g \in \JJ_{U,V,m}}
\frac{\E J_f J_g} { \E J_f \E J_g}.
\]
We provide an estimate for the term inside the sum.
Recall that $\tau_{j,k} = p^j q^k+(1-p)^j(1-q)^k$, $\tau=\tau_{1,1}$.

\begin{lemma}
\label{shuplem}
Assume that condition in \eqref{thespis} holds:
\[
\tau^{3/2} > \max(\tau_{1,2}, \tau_{2,1}).
\]
Set
\begin{equation}
\omega := \frac{\max\{pq, (1-p)(1-q)\}}{pq + (1-p)(1-q)}, 
\quad
\beta := \sqrt{\max\left\{\omega, \frac{\tau_{1, 2}\tau_{2, 1}}{\tau^3}\right\}},
\quad
\gamma := \lambda \log (\tau/\tau_{1,2})
\label{gammadef}
\end{equation} 
Then
\[
0 \le \beta < 1, \qquad \frac{1}{2} < \gamma \le 1,
\]
and, if $r \le d$ then,
for all $(f,g) \in \HH_{d,r,\ell}$, $0 \le \ell \le r$,
\begin{equation} 
\label{shupeq}
\frac{\E J_f J_g}{\E J_f \E J_g}
\le  \left( \frac{1}{\tau} \right)^{(1-\gamma)\binom{d}{2} + \gamma\binom{r}{2}}
\beta^{\frac{1}{2}(r - \ell)(r-2)}.
\end{equation}
\end{lemma}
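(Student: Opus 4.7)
The plan is to take the logarithm of $\E J_f J_g/(\E J_f \E J_g) = \prod_{j,k} \tau_{j,k}^{|\CC_{j,k}|}/\tau^{2\binom{m}{2}}$ (which follows from Proposition~\ref{propN2tris} together with $\E J_f = \E J_g = \tau^{\binom{m}{2}}$), bound each $\log \tau_{j,k}$ by a linear combination of $\log\tau$ and $\log\omega$, and then collapse the resulting sums using the identities in Lemma~\ref{sigmaident} together with \eqref{por1} and \eqref{por4}. Write $c_{j,k} := |\CC_{j,k}(f,g)|$; by Lemma~\ref{ccut} only $k \in \{j-1,j,j+1\}$ contribute.

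The first step is to derive the elementary power bounds
\[
\tau_{j,j} \le \tau^j \omega^{j-1}, \qquad \tau_{j,j+1} \le \tau^{j+\gamma}\omega^{j-1}, \qquad \tau_{j+1,j} \le \tau^{j+\gamma'}\omega^{j-1}, \qquad j \ge 1,
\]
where $\gamma' := \lambda \log(\tau/\tau_{2,1})$. Setting $x := pq/\tau$, $y := (1-p)(1-q)/\tau$ (so $x+y = 1$ and $\max(x,y) = \omega$), all three follow from the elementary inequality $a^j + b^j \le (a+b)\max(a,b)^{j-1}$: for instance $\tau_{j,j+1}/\tau^j = qx^j + (1-q)y^j \le \omega^{j-1}(qx+(1-q)y) = \omega^{j-1}\tau_{1,2}/\tau = \omega^{j-1}\tau^{\gamma}$. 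The claimed bounds on $\gamma$ and $\beta$ then come for free: admissibility \eqref{thespis} forces $\tau_{1,2}<\tau^{3/2}$, hence $\gamma > \tfrac12$ (and the same argument gives $\gamma' > \tfrac12$); Cauchy--Schwarz yields $\tau^2 \le \tau_{1,2}$, hence $\gamma \le 1$; and both $\omega < 1$ and $\tau_{1,2}\tau_{2,1}/\tau^3 < 1$ are immediate, so $\beta < 1$.

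Substituting these power bounds, using identity \eqref{left} to simplify the coefficient of $\log\tau$, and using \eqref{por1} to replace $\sum_j c_{j,j+1}$ by $\sum_j c_{j+1,j} + \binom{d}{2}-\binom{r}{2}$, a direct algebraic manipulation (keeping track of the fact that $\log\tau,\log\omega$ are negative) produces
\[
\log\frac{\E J_f J_g}{\E J_f \E J_g} \;\le\; \left[(1-\gamma)\binom{d}{2}+\gamma\binom{r}{2}\right]\log(1/\tau) \;-\; a\sum_j c_{j+1,j} \;-\; b\, B',
\]
where $a := (\gamma+\gamma'-1)\log(1/\tau) > 0$, $b := \log(1/\omega) > 0$, and $B' := \sum_{j \ge 1}(j-1)(c_{j,j}+c_{j,j+1}+c_{j+1,j}) \ge 0$. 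The first bracket is precisely the exponent in the target bound, so it remains to absorb the nonnegative remainder $a\sum_j c_{j+1,j}+b B'$ into the factor $\beta^{(r-\ell)(r-2)/2}$.

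The endgame rests on two observations. First, by the very definition of $\beta$ one has $2\log(1/\beta) = \min(a,b)$, whence
\[
a\sum_j c_{j+1,j} + b B' \;\ge\; \min(a,b)\Bigl(\sum_j c_{j+1,j}+B'\Bigr) = 2\log(1/\beta)\,\Bigl(\sum_j c_{j+1,j}+B'\Bigr).
\]
Second -- and this is the decisive combinatorial input -- expanding $B'$ and collecting, the coefficient of $c_{j+1,j}$ in $\sum_j c_{j+1,j}+B'$ becomes $1+(j-1)=j$, so this sum is \emph{exactly} the left-hand side of \eqref{por4}. Combining \eqref{por4} with Lemma~\ref{lambduplem} (and the trivial inequality $(r-\ell)\ell \ge 0$),
\[
\sum_j c_{j+1,j} + B' \;\ge\; \tfrac12\Bigl(\binom{r}{2}-|\ZZ|\Bigr) \;\ge\; \tfrac{(r-\ell)(r+\ell-2)}{4} \;\ge\; \tfrac{(r-\ell)(r-2)}{4},
\]
so the remainder dominates $\tfrac{(r-\ell)(r-2)}{2}\log(1/\beta)$, yielding \eqref{shupeq}. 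The main obstacle is the sign bookkeeping, together with recognising that the definition of $\beta$ in \eqref{gammadef} is tuned exactly to match the LHS of \eqref{por4}; the hypothesis $r \le d$ is a convenience only, the opposite regime being handled by the mirror argument in which $(p,d,\gamma)$ and $(q,r,\gamma')$ swap roles.
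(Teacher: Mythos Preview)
Your proof is correct and follows essentially the same route as the paper's: both start from Proposition~\ref{propN2tris}, bound each $\tau_{j,k}$ by $(\omega\tau)^{\min(j,k)-1}$ times a boundary term, collapse the resulting exponents using \eqref{left} and \eqref{por1}, and then invoke \eqref{por4} together with Lemma~\ref{lambduplem} to extract the $\beta$-factor. The only cosmetic difference is that you work additively in logarithms and name the auxiliary quantity $\gamma' = \lambda\log(\tau/\tau_{2,1})$, which lets you record the pleasant identity $2\log(1/\beta)=\min\bigl((\gamma+\gamma'-1)\log(1/\tau),\,\log(1/\omega)\bigr)$ --- exactly the paper's step $\omega^{\mathsf A}(\tau_{1,2}\tau_{2,1}/\tau^3)^{\mathsf D_{2,1}}\le\beta^{2(\mathsf A+\mathsf D_{2,1})}$ in disguise.
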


\begin{proof}
Since $\omega<1$ and
$\tau_{1,2} \tau_{2,1} \le \max(\tau_{1,2}, \tau_{2,1})^2 < \tau^3$. 
we have that $\beta<1$.
Next, by elementary algebra, $\tau^2 \le \tau_{1,2}$. Hence $\tau/\tau_{1/2}  \le 1/\tau$,
and, taking logarithms, $\log(\tau/\tau_{1/2}) \le \log(1/\tau)=1/\lambda$.
This gives $\gamma \le 1$.
Since the condition in \eqref{thespis} holds, $\tau^{3/2} >\tau_{1,2}$. 
Hence $\tau/\tau_{1,2} >1/\tau^{1/2}$.
Taking logarithms we obtain $\log(\tau/\tau_{1,2}) > \frac12 \log(1/\tau)
=\frac{1}{2\lambda}$.
This gives $\gamma >1/2$.
Assume $(f,g) \in \HH_{d,r,\ell}$, which means that
$\dom f \cap \dom g$ has size $d$, $\ran f \cap \ran g$ has size $r$ 
and $Z(f,g)$ has size $\ell$.
We now use the expression 
$\E J_f J_g=\prod_{j,k \ge 1} \tau_{j,k}^{|\CC_{j,k}(f,g)|}$ obtained in
Proposition \ref{N2tris}.
Write $\bar p = 1-p$, $\bar q = 1-q$.
We write 
\begin{align*}
\tau_{j,k} 
&= (pq)^{j-1} p q^{k-j+1} 
+ (\bar p \bar q)^{j-1}  \bar p  \bar q^{k-j+1} 
\\
&\le \max\left\{(pq)^{j-1},\, (\bar p\bar q)^{j-1}\right\}
\, (p q^{k-j+1} + \bar p \bar q^{k-j+1}) 
\\
& = (\omega \tau)^{j-1} \, (p q^{k-j+1} + \bar p \bar q^{k-j+1}).
\end{align*}
We obtain a second inequality by interchanging $j$ and $k$.
We therefore have
\[
\tau_{j,k} \le \begin{cases}
(\omega \tau)^{j-1} \tau_{1,k-j+1} & \text{ if } j \le k
\\
(\omega \tau)^{k-1} \tau_{j-k+1,1} & \text{ if } k \le j
\end{cases}.
\]
Using these inequalities in the expression for $\E J_f J_g$ we have
\begin{align*}
\E J_f J_g \le
\prod_{\substack{j,k \ge 1\\ j \le k}} 
\left[(\omega \tau)^{j-1} \tau_{1,k-j+1}\right]^{|\CC_{j,k}(f,g)|}
\prod_{\substack{j,k \ge 1\\ k<j}} 
\left[(\omega \tau)^{k-1} \tau_{j-k+1,1}\right]^{|\CC_{j,k}(f,g)|}.
\end{align*}
By Lemma \ref{ccut}, only the terms of the form $(j,j+1)$ or $(j,j)$ 
in the first product, and only the terms of the form $(k,k+1)$ in the second product,
survive. Making the change of variable $(j,k) \to (k,j)$ in the second
product and grouping terms together, we obtain
\begin{equation}
\label{once}
\E J_f J_g \le
\omega^{\mathsf A} \, \tau^{\mathsf B}
\, \tau_{1,2}^{\mathsf D_{1,2}}
\, \tau_{2,1}^{\mathsf D_{2,1}},
\end{equation}
where
\begin{align*}
\mathsf A &= \sum_{j \ge 1} 
\big\{(j-1) |\CC_{j,j+1}| + (j-1) |\CC_{j+1,j}| + (j-1) |\CC_{j,j}| \big\}
\\
\mathsf B &= \sum_{j \ge 1} 
\big\{(j-1) |\CC_{j,j+1}| + (j-1) |\CC_{j+1,j}| + j |\CC_{j,j}| \big\}
\\ & \mathsf D_{1,2} = \sum_{j \ge 1} |\CC_{j,j+1}|, \quad
\mathsf D_{2,1} = \sum_{j \ge 1} |\CC_{j+1,j}|
\end{align*}
Using these symbols, \eqref{right} reads
\[
\mathsf B + \mathsf D_{1,2} + 2 \mathsf D_{2,1} = 2 \binom{m}{2}-\binom{d}{2},
\]
and so the right-hand side of \eqref{once} equals
\begin{align}
\omega^{\mathsf A} \tau^{2 \binom{m}{2}-\binom{d}{2}}
\left(\frac{\tau_{1,2}}{\tau}\right)^{\mathsf D_{1,2}} 
\left(\frac{\tau_{2,1}}{\tau^2}\right)^{\mathsf D_{2,1}} 
=
\omega^{\mathsf A} \tau^{2 \binom{m}{2}-\binom{d}{2}}
\left(\frac{\tau_{1,2}}{\tau}\right)^{\mathsf D_{1,2}-\mathsf D_{2,1}}
\left(\frac{\tau_{1,2}\tau_{2,1}}{\tau^3}\right)^{\mathsf D_{2,1}} 
\nonumber
\\
\le \beta^{2 \mathsf A+2 \mathsf D_{1,2}} \tau^{2 \binom{m}{2}-\binom{d}{2}}
\left(\frac{\tau_{1,2}}{\tau}\right)^{\mathsf D_{1,2}-\mathsf D_{2,1}} .
\label{twice}
\end{align}
Rewrite \eqref{por1} and \eqref{por4} as
\[
\mathsf D_{1,2}-\mathsf D_{2,1} = \binom{d}{2}-\binom{r}{2},
\quad
2 \mathsf A+2 \mathsf D_{1,2} \ge \binom{r}{2}-|\ZZ|.
\]
Note that both terms are nonnegative, the first due to the assumption
$r \le d$, and the second due to \eqref{por3}.
We then see that the right-hand side of 
\eqref{twice} is majorized by the right-hand side
of:
\begin{equation}
\label{thrice}
\E J_f J_g 
\le 
\beta^{\binom{r}{2}-|\ZZ|} \tau^{2\binom{m}{2}-\binom{d}{2}}
\left(\frac{\tau_{1,2}}{\tau}\right)^{\binom{d}{2}-\binom{r}{2}}.
\end{equation}
To conclude the proof, we use the second inequality 
in \eqref{lambdupeq} of Lemma \eqref{lambduplem} to obtain
\[
\binom{r}{2}-|\ZZ| \ge \binom{|Z|}{2}+\frac12 (r-|Z|)
= \binom{r}{2}- \binom{\ell}{2}+\frac12 (r-\ell)
\ge \frac12 (r-\ell)(r-2),
\]
Replacing the exponent of $\beta$ in \eqref{thrice} by the latter
quantity and
dividing both sides by $(\E J_f)(\E J_g) = \tau^{2\binom{m}{2}}$,
we obtain at \eqref{shupeq}.
\[
\frac{\E J_f J_g}{\E J_f \E J_g}
\le \left( \frac{1}{\tau} \right)^{\binom{d}{2}}
\left( \frac{\tau_{1,2}}{\tau} \right)^{\binom{d}{2} - \binom{r}{2}} 
\beta^{\frac{1}{2}(r - \ell)(r-2)}
=  \left( \frac{1}{\tau} \right)^{(1-\gamma)\binom{d}{2} + \gamma\binom{r}{2}},
\]
by the definition of $\gamma$.
\end{proof}

\subsubsection{\bfseries Proof of Phase I of the common subgraph problem}
We aim at proving (I) of Theorem \ref{thm2}. This will be done by showing that
$\limsup_{n \to \infty} \frac{\E N^2}{(\E N)^2}\le 1$, provided that
$m(n)= \lfloor m_*(n)-(C_n/\log n)\rfloor$, $C_n \to \infty$,
$C_n/\log n \to 0$.
Recall that
\begin{equation}
\label{recallratio}
\frac{\E N^2}{(\E N)^2} 
= \sum_{d, r \ge 0} \sum_{(f, g) \in \HH_{d, r}} \frac{\E J_f J_g}{(\E N)^2}
= \frac{1}{|\JJ_{U,V,m}|^2} \sum_{(f, g) \in \JJ_{U,V,m}} \frac{\E J_f J_g}{\E J_f \E J_g}.
\end{equation}
Letting
\begin{equation}
\label{Tdr}
\TTT_{d, r} 
= \frac{1}{|\JJ_{U,V,m}|^2} 
\sum_{(f, g) \in \HH_{d, r}} \frac{\E J_f J_g}{\E J_f \E J_g}, \quad
0 \le d \le r \le m,
\end{equation}
we write, for some appropriate $0<c<1$,
\begin{gather}
\frac{\E N^2}{(\E N)^2} 
= \TTT_{0,0} + \TTT_{m,m} + \sum_{\substack{r\le d,\,r\le cm \\ (d,r)\neq (0,0) }} \TTT_{d,r}
+ \sum_{\substack{cm < r \le d \\ (d,r) \neq (m,m) }} \TTT_{d,r}
+ \sum_{r < d} \TTT_{d,r}.
\label{saksa}
\end{gather}  
Each of the five terms on the right of \eqref{saksa} will be treated separately.

\begin{lemma} \label{termuplem}
Let $\TTT_{d,r}$ be defined by \eqref{Tdr} and $\gamma$ by \eqref{gammadef}
There are universal sequences $\chi(n) \to 1$, $\psi(n)\to 1$ such that:
\\
If $0\le r \le d\le m$ we have
\begin{equation}
\label{termupeq1}
\mathcal \TTT_{d, r} \le \chi (n) \binom{m}{d} \binom{m}{r} \frac{(m)_d (m)_r}{n^{d+r}}
\left( \frac{1}{\tau} \right)^{(1-\gamma) \binom{d}{2} + \gamma \binom{r}{2}}.
\end{equation}
If, in addition, $cm \le r \le d$, for some $0<c<1$, we have the sharper inequality
\begin{equation}
\label{termupeq2}
\mathcal \TTT_{d, r} \le \chi(n) \psi (m) \binom{m}{d} \binom{m}{r} \frac{(m)_d}{n^{d+r}}
\left( \frac{1}{\tau} \right)^{(1-\gamma) \binom{d}{2} + \gamma \binom{r}{2}}.
\end{equation}
\end{lemma}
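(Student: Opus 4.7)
The idea is to refine the sum defining $\TTT_{d,r}$ by first stratifying $\HH_{d,r}$ according to the value of $\ell = |Z(f,g)|$, then applying the correlation bound of Lemma \ref{shuplem} on each stratum, and finally calling on the counting lemmas of Section \ref{seccombest}. Since $r \le d$, Lemma \ref{shuplem} is applicable with the exponent $(r-\ell)(r-2)/2$, and yields
\[
\TTT_{d,r} \le \frac{1}{|\JJ_{U,V,m}|^{2}}\left(\frac{1}{\tau}\right)^{(1-\gamma)\binom{d}{2}+\gamma\binom{r}{2}} \sum_{\ell=0}^{r} |\HH_{d,r,\ell}|\, \beta^{(r-\ell)(r-2)/2}.
\]
All subsequent work consists in estimating the $\ell$-sum and inserting the asymptotic formula for $|\HH_{d,r}|/|\JJ_{U,V,m}|^{2}$ from Lemma \ref{CARD3lem}.

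For \eqref{termupeq1} I would simply drop the $\beta$-factor, using $0<\beta<1$; the remaining sum collapses to $|\HH_{d,r}|$ since the $\HH_{d,r,\ell}$ form a partition. Substituting the identity $|\HH_{d,r}|/|\JJ_{U,V,m}|^{2} = \chi(n)\binom{m}{d}\binom{m}{r}(m)_d(m)_r/n^{d+r}$ of Lemma \ref{CARD3lem} gives \eqref{termupeq1} verbatim, with $\chi(n)$ the very sequence produced by that lemma.

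For the sharper bound \eqref{termupeq2} the factor $(m)_r$ (which can be of order $m^{cm}$) must be absorbed, and this is where the $\beta$-factor has to be exploited. I would use the finer estimate of Lemma \ref{CARD2lem}, namely $|\HH_{d,r,\ell}| \le |\HH_{d,r}|\binom{r}{\ell}/(m)_\ell$ (taking the $r$ branch of the binomial since $r = d\wedge r$). Substituting $t := r-\ell$, the bound on $(m)_r/(m)_{r-t} \le m^t$ allows one to dominate
\[
(m)_r \sum_{\ell=0}^{r} \binom{r}{\ell}\frac{\beta^{(r-\ell)(r-2)/2}}{(m)_\ell} \le \sum_{t=0}^{r}\binom{r}{t}\bigl(m\beta^{(r-2)/2}\bigr)^{t} = \bigl(1 + m\beta^{(r-2)/2}\bigr)^{r}.
\]
Calling this last quantity $\psi_r(m)$, the bound \eqref{termupeq2} follows with $\psi(m) := \sup_{\lceil cm\rceil \le r \le m} \psi_r(m)$, by using Lemma \ref{CARD3lem} once more.

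The only delicate step is verifying that $\psi(m) \to 1$. Because $0<\beta<1$ and $r\le m$, one has $r\cdot m\beta^{(r-2)/2} \le m^{2}\beta^{(cm-2)/2}$ for every $r \ge cm$, and the logarithm $2\log m + \tfrac12(cm-2)\log\beta$ of the right-hand side tends to $-\infty$. Hence $\psi(m) \le \exp\bigl(m^{2}\beta^{(cm-2)/2}\bigr) \to 1$ as $m\to\infty$, which finishes the plan. This last verification is the main (and essentially the only) technical obstacle; everything else is a bookkeeping exercise combining Lemmas \ref{shuplem}, \ref{CARD2lem} and \ref{CARD3lem}.
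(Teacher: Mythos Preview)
Your proposal is correct and follows essentially the same route as the paper: stratify by $\ell$, apply Lemma \ref{shuplem}, drop $\beta$ for \eqref{termupeq1}, and for \eqref{termupeq2} combine Lemma \ref{CARD2lem} with the bound $(m)_r/(m)_\ell \le m^{r-\ell}$ to reduce the $\ell$-sum to a binomial expression, then invoke Lemma \ref{CARD3lem}. The only cosmetic difference is that the paper replaces $r-2$ by $cm-2$ in the exponent of $\beta$ before summing and sets $\psi(m)=(1+m\beta^{(cm-2)/2})^m$ directly, whereas you sum first and then take a supremum over $r\ge cm$; both definitions of $\psi(m)$ tend to $1$ for the same reason.
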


\begin{proof}
Assume $r \le d$.
Look at the expression \eqref{Tdr} for $\TTT_{d,r}$.
Using \eqref{shupeq} of Lemma \ref{shuplem} we have
\[
\frac{\E J_f J_g}{\E J_f \E J_g}
\le  \left( \frac{1}{\tau} \right)^{(1-\gamma)\binom{d}{2} + \gamma\binom{r}{2}}            
\beta^{\frac{1}{2}(r - \ell)(r-2)}
\le  \left( \frac{1}{\tau} \right)^{(1-\gamma)\binom{d}{2} + \gamma\binom{r}{2}},
\]
where we used the fact that $\beta \le 1$.
Inserting this in \eqref{Tdr} we have
\[
\TTT_{d, r} 
\le \frac{\left|\HH_{d, r}\right|}{\left|\JJ_{U, V,m}\right|^2}
\left( \frac{1}{\tau} \right)^{(1-\gamma)\binom{d}{2} + \gamma\binom{r}{2}}.
\]
Then using \eqref{CARD3eq} of Lemma \ref{CARD3lem} we further have that
\[
\TTT_{d,r} \le \chi (n) \binom{m}{d} \binom{m}{r} \frac{(m)_d (m)_r}{n^{d+r}}
\left( \frac{1}{\tau} \right)^{(1-\gamma) \binom{d}{2} + \gamma \binom{r}{2}},
\]
where $\chi(n) \to 1$ and $\chi(n)$ does not depend on $d$ or $r$,
proving \eqref{termupeq1}.

Assume further that $cm \le r \le d$.
Using that $\HH_{d,r}$ is the disjoint union of $\HH_{d,r,\ell}$, $\ell=0,\ldots, r$,
further split the sum in \eqref{Tdr} and then
{use inequality \eqref{shupeq} of Lemma \ref{shuplem} again} in full force
(we do not omit the $\beta$) to obtain 
\begin{align*}
\TTT_{d, r} &= \frac{1}{|\JJ_{U,V,m}|^2} 
\sum_{\ell=0}^r \sum_{(f, g) \in \HH_{d, r,\ell}} 
\frac{\E J_f J_g}{\E J_f \E J_g}
\\
&\le 
\sum_{\ell=0}^r \frac{1}{|\JJ_{U,V,m}|^2} \sum_{(f, g) \in \HH_{d, r,\ell}} 
\left( \frac{1}{\tau} \right)^{(1-\gamma)\binom{d}{2} + \gamma\binom{r}{2}}
\beta^{\frac{1}{2}(r - \ell)(r-2)}
\\
&= 
\left( \frac{1}{\tau} \right)^{(1-\gamma)\binom{d}{2} + \gamma\binom{r}{2}}
\sum_{\ell=0}^r 
\frac{|\HH_{d,r,\ell}|}{|\JJ_{U,V,m}|^2}
\,\beta^{\frac{1}{2}(r - \ell)(r-2)}
\end{align*}
From \eqref{CARD2eq} of Lemma \ref{CARD2lem} we have (since $\ell \le r \le d$)
\[
|\HH_{d,r,\ell}| \le {|\HH_{d,r}|} \binom{r}{\ell} \frac{1}{(m)_\ell}
\le  {|\HH_{d,r}|} \binom{r}{\ell} \frac{m^{r-\ell}}{(m)_r}.
\]
So we further obtain
\begin{equation}
\label{further}
\TTT_{d, r} \le \left( \frac{1}{\tau} \right)^{(1-\gamma)\binom{d}{2} + \gamma\binom{r}{2}}
\frac{1}{(m)_r}
\frac{|\HH_{d,r}|}{|\JJ_{U,V,m}|^2}
\, \sum_{\ell=0}^r 
\binom{r}{\ell} m^{r-\ell}
\beta^{\frac{1}{2}(r - \ell)(r-2)}
\end{equation}
We now use the assumption that $r>cm$ to get
$\beta^{\frac{1}{2}(r - \ell)(r-2)} \le \beta^{\frac{1}{2}(r - \ell)(cm-2)}$
so that
\[
\sum_{\ell=0}^r 
\binom{r}{\ell} m^{r-\ell}
\beta^{\frac{1}{2}(r - \ell)(r-2)}
\le \left(1+m \beta^{(cm-2)/2}\right)^m := \psi(m),
\]
noting that $\psi(m) \to 1$ as $m$ tends to $\infty$.
We now replace the last sum of \eqref{further} by $\psi(m)$
and the term
$\frac{1}{(m)_r}
\frac{|\HH_{d,r}|}{|\JJ_{U,V,m}|^2}$
by 
$\chi(n) \binom{m}{d} \binom{m}{r} \frac{(m)_d}{n^{d+r}}$ from \eqref{CARD3eq}
to immediately arrive at \eqref{termupeq2}.
\end{proof}

\begin{proposition}
\label{allterms}
Assume that $(p, q) \in \Y$, as in \eqref{thespis}.
Take $m=m(n) = \lfloor m_*(n) - C_n/\log n\rfloor$, 
with $C_n \to \infty$, $C_n/\log n \to 0$,
as $n \to \infty$.
Fix a constant $c$ strictly between $0$ and $1$. Then
\begin{enumerate}[\rm (i)]
\item
$\limsup_{n \to \infty} \TTT_{0,0} \le 1;$
\item
$\lim_{n \to \infty} \TTT_{m,m}= 0;$
\item
$\lim_{n \to \infty} \displaystyle \sum_{\substack{r\le d,\,r\le cm \\ (d,r)\neq (0,0) }} \TTT_{d,r} = 0;$
\item
$\lim_{n \to \infty} \displaystyle 
\sum_{\substack{cm \le r \le d \\ (d,r) \neq (m,m) }} \TTT_{d,r} = 0;$
\item
$\lim_{n \to \infty} \displaystyle \sum_{\substack{d \le r\\(d,r) \neq(0,0)}} 
\TTT_{d,r} = 0.$
\end{enumerate}
\end{proposition}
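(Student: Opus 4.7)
The plan is to treat the five contributions independently, combining the counting estimates of Lemmas~\ref{CARD1lem}, \ref{CARD2lem}, \ref{CARD3lem} with the correlation bound of Lemma~\ref{shuplem} and its consequence Lemma~\ref{termuplem}. Item~(i) is essentially immediate: when $(f,g)\in\HH_{0,0}$, the domains and ranges of $f$ and $g$ are disjoint, so $J_f$ and $J_g$ depend on disjoint Bernoulli families and are therefore independent; hence $\E J_f J_g=\E J_f\,\E J_g$ and $\TTT_{0,0}=|\HH_{0,0}|/|\JJ_{U,V,m}|^2=\chi(n)\to 1$ by \eqref{CARD3eq} with $d=r=0$.

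For (ii), the decisive move is to retain the $\beta$-factor in \eqref{shupeq}. Partitioning $\HH_{m,m}$ by $\ell=|Z(f,g)|$ and using $|\HH_{m,m,\ell}|\le|\HH_{m,m}|/\ell!$ (this is \eqref{CARD2eq} with $d\wedge r=m$), one obtains
\begin{equation*}
\TTT_{m,m}\le\frac{\tau^{-\binom{m}{2}}}{\binom{n}{m}^2}\sum_{\ell=0}^m\frac{\beta^{(m-\ell)(m-2)/2}}{\ell!}.
\end{equation*}
Setting $u=\beta^{(m-2)/2}\to 0$ and changing index $k=m-\ell$, the inner sum becomes $u^m\sum_{j=0}^m u^{-j}/j!$; since $mu\to 0$, its dominant term is at $j=m$ and the sum equals $(1+o(1))/m!$. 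Together with $(n)_m^2=\binom{n}{m}^2(m!)^2$ and $\E N=(n)_m^2\tau^{\binom{m}{2}}/m!$ this gives $\TTT_{m,m}\le(1+o(1))/\E N$. The computation of Lemma~\ref{ENasymptlem} runs with reversed sign when $m<m_*$ (because $W$ is strictly increasing), yielding $\E N\to\infty$ and hence $\TTT_{m,m}\to 0$.

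For (iii), (iv) and (v) I would feed the respective bound of Lemma~\ref{termuplem} into the sum, denote the summand by $a_{d,r}$, and study the ratios
\begin{equation*}
\frac{a_{d+1,r}}{a_{d,r}}\asymp\frac{(m-d)^2}{(d+1)n}\tau^{-(1-\gamma)d},\qquad\frac{a_{d,r+1}}{a_{d,r}}\asymp\frac{(m-r)^2}{(r+1)n}\tau^{-\gamma r}.
\end{equation*}
The $d$- and $r$-dependence of $\log a_{d,r}$ is dominated by the concave contributions $-(1-\gamma)\binom{d}{2}/\lambda$ and $-\gamma\binom{r}{2}/\lambda$, so $a_{d,r}$ is unimodal in each coordinate and the sum over each region is controlled, up to a polynomial $O(m^2)$ factor, by its corner maxima. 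Plugging in $m\approx m_*\approx 4\lambda\log n$ (see \eqref{4lambda}), each relevant corner---$(m,0)$, $(m,\lceil cm\rceil)$ and $(\lceil cm\rceil,\lceil cm\rceil)$ for (iii); $(m,\lfloor cm\rfloor)$ for (iv); and the mirror-image corners for (v)---gives $\log a_{d,r}=-\Theta((\log n)^2)$, which swallows the $O(\log(m^2))$ loss and delivers the desired $o(1)$ bound. For (v) I would first re-derive Lemma~\ref{shuplem} with the left/right roles of the bipartite graph swapped; this reproduces the same estimate with $(d,r)$ interchanged and $\gamma$ replaced by $\gamma':=\lambda\log(\tau/\tau_{2,1})$, and $\gamma'>1/2$ by the $\tau_{2,1}$ half of the admissibility condition \eqref{thespis}.

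The hardest step is the sign check at the large corner $(\lceil cm\rceil,\lceil cm\rceil)$ (and its symmetric counterpart in (v)): the coefficient of $(\log n)^2$ in $\log a$ at that corner involves a combination of the shape $2c(1-\gamma)-1$, which must be strictly negative for every allowed $c\in(0,1)$. Admissibility $(p,q)\in\Y$ forces $\gamma,\gamma'>1/2$, hence $2(1-\gamma),\,2(1-\gamma')<1$, so the bracket is negative for any $c<1$. Without the admissibility restriction this sign could flip, which is exactly why the region $\Y$ enters the hypothesis of Theorem~\ref{thm2}.
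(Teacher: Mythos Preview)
Your treatment of (i) and (ii) is correct and matches the paper (the paper packages the $\beta$-sum into the factor $\psi(m)$ via \eqref{termupeq2}, but the computation is the same). Your strategy for (iii)--(v) is also the paper's: bound $\log\TTT_{d,r}$ by an explicit quadratic in $(d,r)$ and reduce to checking the extreme points of the relevant region. However, the execution contains several genuine errors.

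First, a sign slip: from \eqref{termupeq1}, the quadratic part of $\log a_{d,r}$ is $+\bigl[(1-\gamma)\binom{d}{2}+\gamma\binom{r}{2}\bigr]/\lambda$, not minus. Thus $\log a$ is \emph{convex}, not concave; this is precisely why the maximum over a convex region is attained at a vertex, which is what the paper uses. Your ``unimodal'' description is incompatible with the corner argument you then invoke.

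Second, and more seriously, you miss the vertices that actually realise the maximum. For (iii) the region has vertices $(1,0)$, $(1,1)$, $(cm,cm)$, $(m,cm)$, $(m,0)$; the maximum of the paper's $P$ is at $(1,0)$, where $P(1,0)=2\log m-\log n=-\Theta(\log n)$, \emph{not} $-\Theta((\log n)^2)$. The sum is then bounded by $m^2\cdot m^2/n\to 0$. Your claim that every relevant corner gives $-\Theta((\log n)^2)$ is false at $(1,0)$ and $(1,1)$. Similarly for (iv): the region has vertices $(cm,cm)$, $(m,cm)$, $(m,m-1)$, $(m-1,m-1)$, and the last two yield only $-\Theta(m)$. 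Moreover, at those near-$(m,m)$ corners the crude bound \eqref{termupeq1} is insufficient (it gives $\log a_{m-1,m-1}\approx 2m\log m\to+\infty$); the paper must use the sharper \eqref{termupeq2}, which saves the factor $(m)_r$. You allude to ``the respective bound'' but never confront these corners.

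Finally, your coefficient at $(cm,cm)$ is miscomputed: the paper gets $-c(1-c)/(2\lambda)$, which is negative for any $c\in(0,1)$ regardless of $\gamma$. The place where $\gamma>1/2$ is genuinely needed is the corner $(m,0)$, where the leading coefficient is $-(2\gamma-1)/(2\lambda)$.
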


\begin{proof}
(i) 
Setting $d=r=0$ in \eqref{termupeq1} of
Lemma \ref{termuplem} for some $c \in (0, 1)$ we obtain
\[
\TTT_{0, 0} \le \chi(n) \to 1.
\]

(ii) 
Setting $d=r=m$ in \eqref{termupeq2} we obtain
\begin{equation}
\label{Tmm}
\TTT_{m, m} \le \xi(n) \psi(m) \frac{m!}{n^{2m}} \left(\frac{1}{\tau}\right)^{\binom{m}{2}} 
= \xi(n) \psi(m)  \frac{(n)_m^2}{n^{2m}} \frac{1}{\E N},
\end{equation}
where we used expression \eqref{crito} for $\E N$.
We now recycle arguments from the proof of Lemma \ref{ENasymptlem}.
From \eqref{ENasympt}, we have that 
\begin{equation}
\label{Nb}
\E N/b(n) \to 1, 
\end{equation}
where 
\[
b(n)= \frac{n^{2m}}{(m/e)^m \sqrt{2\pi m}} \tau^{\frac12 m(m-1)},
\] 
as defined in \eqref{ENasympt}.
The same little algebra that led to \eqref{weealg} gives
\[
\log b(n) = \frac{1}{2\lambda} \, m\, ( W(m_*) - W(m) ),
\]
but now with $m= \lfloor m_* - C_n/\log n\rfloor \le m_*$.
We argued in \eqref{Wprime} that $W'(x) \ge 1$ for all $x>0$, therefore,
\[
\log b(n) \ge \frac{1}{2\lambda} \,m \, (m_*-m)
\ge \frac{1}{2\lambda} \,m \, \frac{C_n}{\log n}.
\]
We now have
\[
\lim_{n \to \infty} \frac{m}{\log n}
= \lim_{n \to \infty} \frac{m_*}{\log n} = 4\lambda,
\]
as argued in \eqref{4lambda}.
Therefore $\log b(n) \to \infty$ and so, by \eqref{Nb},  $\E N \to \infty$.
We now look at the right-hand side of \eqref{Tmm}, realizing that the first three 
terms converge to $1$, while the last converges to $0$.
Hence $\lim_{n \to \infty} \TTT_{m,m}=0$, as claimed.

\begin{figure}[H]
\begin{center}
\includegraphics[width=0.38\textwidth]{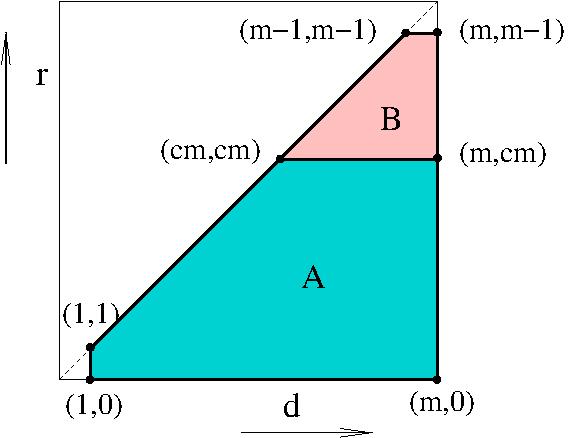}
\captionof{figure}{The sets  $A$, $B$ used in (iii), (iv), respectively.}
\label{tri}
\end{center}
\end{figure}

(iii) 
With $0 \le r \le d \le m$, \eqref{termupeq1} holds.
Using the upper bounds $\binom{m}{d} \le m^d$, $(m)_d \le m^d$,
$\binom{m}{r} \le m^r$, $(m)_r \le m^r$ in \eqref{termupeq1} we obtain
\[
\TTT_{d, r} \le \chi(n) \left(\frac{m^2}{n}\right)^{d+r}
\left( \frac{1}{\tau} \right)^{(1 - \gamma) \binom{d}{2} + \gamma \binom{r}{2}},
\]
and so
\[
\log(\TTT_{d, r}) \le \log\chi(n) +  P(d,r),
\]
where
\begin{equation}
\label{Pxy}
P(x,y) := (x + y)(2 \log(m) - \log(n)) + 
\frac{1 - \gamma}{\lambda} \binom{x}{2} + \frac{\gamma}{\lambda} \binom{y}{2}.
\end{equation}
Note that $P(x,y)$ is a convex quadratic polynomial. We seek to estimate the 
sum of $\TTT_{d,r}$ over pairs of integers $(d,r)$ 
such that $r \le d$, $r \le cm$ and $(d,r) \neq (0,0)$.
If we let $A \subset \R^2$ be the convex hull (Fig.\ \ref{tri}) of the points
\[
\text{$(1, 0)$, $(1, 1)$, $(m, 0)$, $(m, cm)$, $(cm, cm)$,}
\]
then the sum is over pairs of integers in $A$. We seek to find 
an asymptotic upper bound of $P$ on $A$.
Since $A$ is closed and convex, the maximum of $P$ 
is achieved at one of the above 5 points.
We have
\[
P(1, 0) = 2 \log m - \log n = -\frac{1}{4 \lambda} m +O(\log m)  ;
\]
\[
P(1, 1) = 4 \log m - 2 \log n = -\frac{1}{2 \lambda} m +O(\log m)  ;
\]
\[
P(m, 0) = 2 m \log m - m \log n + \frac{1 - \gamma}{2\lambda} m(m-1)
= 
-\frac{2 \gamma - 1}{2 \lambda} m^2 +  O(m \log m)   ;
\]
\[
P(cm, cm) = 4 cm \log m - 2cm \log n + \frac{1}{2\lambda} cm(cm-1)
= -\frac{c - c^2}{2 \lambda} m^2 + O(m \log m)   ;
\]
\begin{multline*}
P(m, cm) = 2 (1 + c) m \log m - (1 + c) m \log n
+ \frac{1 - \gamma}{2\lambda} m(m-1)  + \frac{\gamma}{2\lambda} cm(cm-1)
\\
= -\frac{(1 - c) (2 \gamma (1 + c) - 1)}{4 \lambda} m^2 +  O(m \log m) .
\end{multline*}
Indeed, the first equality in each of these expressions follows 
directly from \eqref{Pxy}, while the second one follows by letting
$\log n = m/4\lambda + O(\log m)$, as in Corollary \ref{coroA2}.
Examining the leading terms in the above expressions, i.e., the terms
of order $m$ for the first two and order $m^2$ for the last three,
we see that all coefficients are negative. (Here we need that $\gamma>1/2$.)
The largest of them is therefore of order $m$. 
Comparing the coefficients of order $m$ in the first two expressions, we
conclude that
\[
\max_{(x,y)\in A} P(x,y) = P(1, 0) = 2 \log m - \log n, \quad \text{ for all large $n$.}
\]
Hence
\[
\sum_{\substack{r\le d,\,r\le cm \\ (d,r)\neq (0,0) }} \TTT_{d,r} 
\le \chi(n) \sum_{\substack{r\le d,\,r\le cm \\ (d,r)\neq (0,0) }} \frac{m^2}{n}
\le \chi(n) \frac{m^4}{n} \to 0,
\]
where we used Corollary \ref{coroA2} again for the last convergence to zero.

(iv)  
Since we are now interested in the case $cm \le r \le d$,
we use the more detailed estimate  \eqref{termupeq2}
for $\TTT_{d,r}$ 
Using the bounds $\binom{m}{d} \le m^{m-d}$, $\binom{m}{r} \le m^{m-r}$,
$(m)_d \le m!$ in \eqref{termupeq2} we obtain
\[
\TTT_{d, r} \le \chi(n) \psi (m)  \frac{m^{2m - d - r} m!}{n^{d + r}}
\left( \frac{1}{\tau} \right)^{(1-\gamma)\binom{d}{2} + \gamma \binom{r}{2}}.
\]
Taking logarithms and writing $\log m! \le m \log m -m + C$, for some constant $C$, we have
\begin{equation}
\label{logT}
\log (\TTT_{d, r}) \le \log(\xi(n) \psi(m)) + O(\log m) + Q(d, r),
\end{equation}
where
\[
Q(x, y) = (3m - x - y) \log m 
- m - (x + y) \log n  + \frac{1 - \gamma}{\lambda} \binom{x}{2} 
+ \frac{\gamma}{\lambda} \binom{y}{2}.
\]
As before, we to find an asymptotic upper bound for
$Q(x,y)$, but now over the convex hull $B\subset \R^2$  
(Fig.\ \ref{tri}) of the points 
\[
\text{$(m,m-1)$, $(m-1, m-1)$, $(cm, cm)$, $(m, cm)$}.
\]
Again, $\max_{(x,y) \in B} Q(x,y)$ is achieved at one
of these 5 points.
We have
\[
Q(m, m-1) = -m + (m + 1) \log m - (2m - 1) \log n
+ \frac{1-\gamma}{2\lambda}m(m-1) + \frac{\gamma}{2\lambda} (m-1)(m-2).
\]
Replacing $\log n$ by the asymptotic lower bound of \ref{coroA3}
we obtain a linear combination of $m^2$, $m \log m$, $m$, and $o(m)$.
The coefficients of the first two terms vanish and we are left with
\[
Q(m,m-1) \le - {\frac{4 \gamma - 1}{4 \lambda} m} +o(m),\,
\text{ for all large $n$}.
\]
For the second point,
\[
Q(m-1, m-1)  = - m + (m + 2) \log m  - 2 (m - 1) \log n 
+ \frac{1}{2\lambda} m(m-1).
\]
Lower bounding $\log n$ in the same way, terms involving $m^2$ and $m\log m$
are annihilated and we obtain
\[
Q(m,m-1) \le {-\frac{1}{2 \lambda} m} + o(m),\,
\text{ for all large $n$}.
\]
For the next  two extreme points of $B$ no terms vanish and we obtain,
for all large $n$,
\begin{align*}
Q(cm, cm)  
&\le - {\frac{c(1-c)}{2 \lambda} m^2} + O(m \log m)
\\
Q(m, cm) 
&\le -{\frac{(1 - c) ( 2\gamma (1+c) - 1)}{4 \lambda} m^2}+O(m \log m).
\end{align*}
We conclude that
\[
\max_{x,y} Q(x,y) 
\le - \min\left\{\frac{4 \gamma - 1}{4 \lambda},\,\frac{1}{2\lambda}\right\} m
+o(m) \le - C_1 m,
\, \text{ for all large $n$,}
\]
for some $C_1>0$. Hence, from \eqref{logT}, there is $C_2>0$  such that
\[
\log(\TTT_{d,r}) \le - C_2 m, \, \text{ for all large $n$.}
\]
We finally obtain that
\[
\sum_{\substack{cm\le r\le d\\(d,r)\neq(0,0)}} \TTT_{d,r} 
\le m^2  e^{-C_2 m} \to 0.
\]

(v) 
The facts proven in (ii), (iii) and (iv) imply that
$\sum_{r \le d, (d,r) \neq (0,0)} \TTT_{d,r} \to 0$.
Symmetry arguments imply that 
$\sum_{d \le r, (d,r) \neq (0,0)} \TTT_{d,r} \to 0$ as well.
For the symmetry arguments we need to interchange $p$ and $q$ and
replace the $\gamma$ of \eqref{gammadef} by 
$\tilde \gamma=\log (\tau/\tau_{2,1})$.
The admissible region of $(p,q)$ specified by 
$\max(\tau_{1,2}, \tau_{2,1}) < \tau^{3/2}$ is symmetric in $p$ and $q$.
So $\gamma > 1/2$ implies that $\tilde \gamma>1/2$ as well.
\end{proof}

\begin{proof}[Concluding the proof of Theorem \ref{thm2}]
Using Proposition \ref{allterms} and \eqref{saksa} we have
that $\limsup \E N^2/(\E N)^2 \le 1$.
Since $\limsup (1/\P(N>0)) \le \limsup \E N^2/(\E N)^2 \le 1$, we conclude that
$\P(N>0) \to 1$, as claimed.
\end{proof}


\section{\bfseries Further remarks}
The following characterization of the admissible region $\Y$,
defined in \eqref{thespis} is worth pointing out.
\begin{proposition}
\label{propc}
Let $n \ge 2$ be an integer.
Set
\[
m \equiv m(n) = \lfloor m_*(n)-(C_n/\log n)\rfloor,
\]
where $C_n \to \infty$ such that $C_n/\log n \to 0$,
and let $N\equiv N(n,p,q)$ be the number of $m$-isomorphisms between two independent
$G(n,p)$ and $G(n,q)$ random graphs. 
Then
\begin{align*}
\Y &= \left\{(p,q) \in (0,1)\times (0,1): \, \lim_{n \to \infty}
\E N^2/(\E N)^2 = 1\right\},
\\
\Y^c &= \left\{(p,q) \in (0,1)\times (0,1): \, \lim_{n \to \infty}
\E N^2/(\E N)^2 = \infty\right\}.
\end{align*}
\end{proposition}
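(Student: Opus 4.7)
The claim amounts to showing that the limit $\lim_{n\to\infty} \E N^2/(\E N)^2$ exists in $\{1,\infty\}$ for every $(p,q)\in(0,1)^2$ and equals $1$ on $\Y$ and $\infty$ on $\Y^c$. Since $\Y$ and $\Y^c$ partition $(0,1)^2$, it suffices to establish the two inclusions $\Y\subseteq\{\lim=1\}$ and $\Y^c\subseteq\{\lim=\infty\}$. The first is essentially already proved: on $\Y$, Proposition \ref{allterms} together with the decomposition \eqref{saksa} yields $\limsup_{n\to\infty} \E N^2/(\E N)^2 \le 1$, while the matching $\liminf \ge 1$ is automatic from $\E N^2 \ge (\E N)^2$.

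For the reverse inclusion $\Y^c\subseteq\{\lim=\infty\}$, assume $\max(\tau_{1,2},\tau_{2,1})\ge \tau^{3/2}$. By the $(p,q)\leftrightarrow(q,p)$ symmetry, which swaps $\tau_{1,2}$ with $\tau_{2,1}$ and $\TTT_{d,r}$ with $\TTT_{r,d}$, we may assume $\tau_{1,2}\ge\tau^{3/2}$. The plan is to exhibit a single summand in
\[
\frac{\E N^2}{(\E N)^2} = \sum_{d,r\ge 0} \TTT_{d,r}
\]
that already diverges, namely $\TTT_{m,0}$. For any $(f,g)\in\HH_{m,0}$ the domains coincide while the ranges are disjoint, so each left Vertex of $\TT(f,g)$ has Degree $2$ and each right Vertex has Degree $1$; consequently $\TT(f,g)$ decomposes into exactly $\binom{m}{2}$ components, all of class $\CC_{1,2}$. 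Proposition \ref{propN2tris} then gives the \emph{exact} identity $\E J_f J_g = \tau_{1,2}^{\binom{m}{2}}$, while Lemma \ref{CARD1lem} yields $|\HH_{m,0}| = \binom{n}{m}(n)_{2m}$, hence
\[
\TTT_{m,0} = \frac{m!\,(n-m)_m}{(n)_m^2}\left(\frac{\tau_{1,2}}{\tau^2}\right)^{\binom{m}{2}}.
\]

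The remaining step is an asymptotic analysis of this expression. Using Stirling together with $(n)_m\sim n^m$ and $(n-m)_m\sim n^m$ (valid because $m=O(\log n)$), and the expansion $\log n = m/(4\lambda) + \tfrac{1}{2}\log m + O(1)$ read off from Lemma \ref{mstarapprox} together with $m=m_*+O(1)$, a short calculation yields
\[
\log \TTT_{m,0} = \tfrac{1}{2}\,m\log m + \tfrac{1}{2}\,m^2\log(\tau_{1,2}/\tau^{3/2}) + O(m).
\]
When $\tau_{1,2}>\tau^{3/2}$ the $m^2$ term dominates and drives $\log\TTT_{m,0}\to+\infty$ at quadratic speed. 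In the delicate boundary case $\tau_{1,2}=\tau^{3/2}$ the quadratic term vanishes, and one must rely on the surviving $\tfrac{1}{2}m\log m$ (together with $m\to\infty$) to still force $\log\TTT_{m,0}\to+\infty$. In both cases $\TTT_{m,0}\to\infty$, and since every $\TTT_{d,r}$ is nonnegative, $\E N^2/(\E N)^2 \ge \TTT_{m,0}\to\infty$, as required.

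The main obstacle is precisely this boundary case $\max(\tau_{1,2},\tau_{2,1})=\tau^{3/2}$: without the $\tfrac12 m\log m$ correction the leading-order terms would cancel and the divergence would be lost. Keeping track of it requires the sharp expansion of $m_*(n)$ supplied by Lemma \ref{mstarapprox}, which is precisely what makes the dichotomy clean.
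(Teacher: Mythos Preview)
Your proof is correct and follows essentially the same approach as the paper: both establish $\Y\subset\{\lim=1\}$ via Proposition~\ref{allterms} and \eqref{saksa}, and both prove $\Y^c\subset\{\lim=\infty\}$ by isolating the single contribution from $\HH_{m,0}$, computing $\E J_f J_g=\tau_{1,2}^{\binom{m}{2}}$ exactly there, and showing the resulting term diverges. The only cosmetic difference is that the paper applies the inequality $\tau_{1,2}\ge\tau^{3/2}$ immediately and then links the bound to $b(n)$ (so the strict and boundary cases are handled simultaneously), whereas you keep $\tau_{1,2}$ in the expansion $\log\TTT_{m,0}=\tfrac12 m\log m+\tfrac12 m^2\log(\tau_{1,2}/\tau^{3/2})+O(m)$ and split into the two cases at the end; both routes are equivalent.
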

\begin{proof}
Inequality \eqref{saksa} says that $\E N^2/(\E N)^2$ is bounded by a quantity
which, by Proposition \ref{allterms},  
has limsup $\le 1$. Since $\E N^2/(\E N)^2 \ge 1$,
it follows that
$\Y \subset \left\{(p,q) : \, \lim_{n \to \infty} \E N^2/(\E N)^2 = 1\right\}$.
Once we show that
\begin{equation}
\label{Yc}
\Y^c \subset 
\left\{(p,q) : \, \lim_{n \to \infty} \E N^2/(\E N)^2 = \infty \right\}
\end{equation}
the proof will be complete.
Suppose that $(p,q) \in \Y^c$, that is, $\max\{\tau_{1,2}(p,q),
\tau_{2,1}(p,q)\} \ge \tau(p,q)^{3/2}$.
Without loss of generality, assume $\tau_{1,2}	 \ge \tau^{3/2}$.
(We omit the dependence on $p,q$ from the notation.)
We look at the expression \eqref{recallratio} for $\E N^2/(\E N)^2$
and set $d=m, r=0$ on the right-hand side to obtain a lower bound:
\[
\frac{\E N^2}{(\E N)^2} 
\ge \sum_{(f,g) \in \HH_{m,0}} \frac{ \E J_f J_g}{(\E N)^2}
= \frac{1}{|\JJ_{U,V,m}|^2} \sum_{(f,g) \in \HH_{m,0}} 
\frac{\E J_f J_g}{\tau^{2 \binom{m}{2}}},
\]
where we used $\E N = |\JJ_{U,V,m}| \tau^{\binom{m}{2}}$; see 
\eqref{crito}.
It is easy to see that $\E J_f J_g$ is the same for all $(f,g) \in \HH_{m,0}$.
Indeed, recalling the definition of $\HH_{d,r}$ as the 
set of pairs $(f,g)$ of partial injections such
that their domains overlap on $d$ points and their ranges on $r$ points,
we have that $(f,g) \in \HH_{m,0}$ iff $f, g$ have a common domain,
say $D$, of size $m$, and disjoint ranges. Hence
\[
\E J_f J_g 
= \P(X(e) = Y(f(e)) = Y(g(e)) \text{ for all } e \in \PP_2(D))
= \tau_{1,2}^{\binom{m}{2}},
\]  
because $\PP_2(D)$ has $\binom{m}{2}$ elements,
and the random variables $X(e)$, $Y(f(e)$, $Y(g(e))$
are Bernoulli with parameters $p$, $q$, $q$, respectively;
moreover, independence is guaranteed since 
$e, f(e), g(e)$ range ever the pairwise disjoint sets $D$, $f(D)$, $g(D)$,
respectively.
Therefore,
\[
\frac{\E N^2}{(\E N)^2}  
\ge \frac{|\HH_{m,0}|}{|\JJ_{U,V,m}|^2}
\left(\frac{\tau_{1,2}}{\tau^2}\right)^{\binom{m}{2}}
\ge \frac{|\HH_{m,0}|}{|\JJ_{U,V,m}|^2}\,
\frac{1}{\tau^{\frac12\binom{m}{2}}},
\]
where we used the assumption $\tau_{1,2}   \ge \tau^{3/2}$
to obtain the last inequality.
Since $m=O(\log n)$, Lemma \ref{CARD3lem} applies.
With $d=m$ and $r=0$, \eqref{CARD3eq} reads
\[
\frac{|\HH_{m,0}|}{|\JJ_{U,V,m}|^2} = \chi(n) \frac{m!}{n^m},
\]
for some universal sequence $\chi(n)$ such that $\chi(n)\to 1$
as $n \to \infty$.
Hence
\begin{equation}
\label{rlb}
\frac{\E N^2}{(\E N)^2}  \ge \chi(n) \frac{m!}{n^m \tau^{\frac12\binom{m}{2}}}
\sim 
\chi(n) \sqrt{m!} \sqrt{\frac{(m/e)^m \sqrt{2\pi m}}{n^{2m}\tau^{\binom{m}{2}}}}
= \chi(n) \sqrt{m!} 
\sqrt{\frac{1}{b(n)}},
\end{equation}
where $b(n)$ is defined by \eqref{rlb} itself and is the same quantity
appearing in the proof of Lemma \ref{ENasymptlem}.
Hence, as in \eqref{weealg},
\[
\log b(n) = \frac{1}{2\lambda} \, m\, ( W(m_*) - W(m) ).
\]
Using the mean value theorem and the fact that $W'(x) \le W'(1)$ for
all $x \ge 1$, we have
\[
\log b(n) \le \frac{W'(1)}{2\lambda} m (m_*-m).
\]
But 
\[
m=\lfloor m_*-(C_n/\log n)\rfloor > m_*-(C_n/\log n)-1,
\]
hence
\[
\log b(n) \le \frac{W'(1)}{2\lambda} m(1+C_n/\log n),
\]
and this implies that $b(n) \le e^{c_1 m}$, eventually,
for some positive constant $c_1$. On the other hand 
$m! \ge e^{c_2 m \log m}$ for some positive constant $c_2$.
So the right side of \eqref{rlb} tends to $\infty$ as $n \to \infty$
and so \eqref{Yc} holds.
\end{proof}

\section{\bfseries Some open problems}
Here is a (partial) list of open problems.
\begin{enumerate}[P1.]
\item
The graph embedding phase transition problem between
$G(m,p)$ and an independent $G(n,q)$ has been fully solved
when $q=1/2$. The case for general $q$ remains open.

\item
As mentioned in Remark \ref{Wrem}(v), there is a delicate question involving
graph embedding of $G(2k+1,p)$ and $G(2^k, 1/2)$; this is an open question
that remains delicate even when $p=1/2$.

\item
Another open question is the study of the distribution of an appropriately
normalized version of $N$ 
(for both the embeddability and the common subgraph problems)
so that it converges to some limit.

\item
How does a largest common induced subgraph between $G(n,p)$ and $G(n,q)$
evolve as $n$ increases? This is a question also asked in \cite{ChD23}.

\item
In the common subgraph problem we considered both graphs to
have the same size $n$. But what happens when one has size $n_1$ and
the other $n_2$, both tending to infinity in a particular way?

\item
An isomorphism between colored graphs can also be defined. One
can thus ask similar questions in this case. The parameter here 
is the whole distribution of the colors in each graph.

\item
Recently, Lenoir \cite{L2024} addressed the common subgraph
problem between two random $d$-hypergraphs, but only
in the uniform case (that is, when we put the uniform
probability measure on the set of all $d$--hypergraphs on $n$ vertices).
For $d=2$ this corresponds to the case $p=q=1/2$. 
One can ask the question of phase transition for more general
than uniform distributions as well.

\item
\label{outside}
As explained, the region $\Y$ is only sufficient, but not necessary,
for the existence of phase transition.
See also Proposition \ref{propc}.
Outside $\Y$ the bound \eqref{CS} does not give useful information.
So the problem is to investigate what is going on outside $\Y$,
being precisely the region where the second moment method does
not provide any information.

\item
Finally one can ask same questions when the parameters. e.g.,
$p$ and $q$ depend on $n$. (For example, phase transition
for the chromatic number of sparse graphs $G(n,d/n)$ is proved in
\cite{AN2005}.)
\end{enumerate}

\renewcommand\refname{\bfseries References}
\bibliographystyle{amsplain}

\begin{thebibliography}{10}
\bibitem{AN2005}
D.\ Achlioptas and A.\ Naor.
The two possible values of the chromatic number of a random graph.
{\em Ann.\ Math.} {\bf 162}, 1335--1351, 2005.
\bibitem{ACK37}
W.\ Ackermann. 
Die Widerspruchsfreiheit der allgemeinen Mengenlehre.
{\em Math.\ Annalen} {\bf 114}, 305-315, 1937.
\bibitem{BIOpaper13}
V.\ Bonnici, R.\ Giugno, A.\ Pulvirenti, D.\ Shasha and A.\ Ferro.
A subgraph isomorphism algorithm and its application to biochemical data.
{\em BMC Bioinformatics} {\bf 14}(Suppl 7):S13 2013.
\bibitem{CAM97} 
P.J.\  Cameron.
The random graph.
The mathematics of Paul Erd\H{o}s, II,
333–351. Springer, Berlin, 1997.
\bibitem{CAM01}
P.J.\  Cameron.
The random graph revisited.
European Congress of Mathematics, Vol 1 (Barcelona 2000),
{\em Progr.\ Math.} {\bf 201}, Basel:\ Birkh\"auser, 267-274.
\bibitem{ChD23}
S.\ Chatterjee and P.\ Diaconis. 
Isomorphisms between random graphs.
{\em J.\ Combin.\ Theory Ser.\ B} {\bf 160}, 144-162, 2023.
\bibitem{COP2016}
A.\ Coja-Oghlan and K.\ Panagiotou, 
The asymptotic $k$-SAT threshold, 
{\em Adv.\ Math.} {\bf 288}, 985--1068, 2016.
\bibitem{OWPS}
P.\ Diaconis. Random walk on the Rado graph.
\href{https://drive.google.com/file/d/1I-4EmXDqPA6LSHmm7ytsoW7fuzStBla_/view}
{One World Probability seminar},
2021.
\bibitem{ENDERTON}
H.B.\ Enderton.
{\em Elements of Set Theory.}
Academic Press, New York, 1977.
\bibitem{ER63}
P.\ Erd\H{o}s and A.\ R\'enyi. 
Asymmetric graphs.
{\em Acta Math.\ Acad.\ Sci.\ Hung.} {\bf 14}, 295-315, 1963.
\bibitem{AIpaper18}
C.\ McCreesh, P.\ Prosser, C.\ Solnon and J.\ Trimble.
When subgraph isomorphism is really hard, and why this
matters for graph databases.
{\em J.\ AI Res.} {\bf 61}, 723--759, 2018.
\bibitem{R64}
R.\ Rado.
Universal graphs and universal functions.
{\em Acta Arithm.} {\bf 9}, 331-340, 1964.
\bibitem{SWZ2023}
E.\ Surya, L.\ Warnke and E.\ Zhu.
Isomorphisms between dense random graphs.
{\em Combinatorica}, to appear, 41 pp.
\href{https://arxiv.org/abs/2305.04850}{https://arxiv.org/abs/2305.04850}
\bibitem{L2024}
T.\ Lenoir.
Isomorphisms between random $d$-hypergraphs.
\href{https://arxiv.org/abs/2405.04670}{https://arxiv.org/abs/2405.04670}
\bibitem{TR85}
J.K.\ Truss. 
The group of the countable universal graph.
{\em Math.\ Proc.\ Cambridge Phil.\ Soc.} 
{\bf 98},
213-245, 
1985.
\bibitem{ULL76}
J.R.\ Ullmann. 
An algorithm for subgraph isomorphism.
{\em J.\ Assoc.\ Comp.\ Mach.} {\bf 23}, 31--42, 1976.
\end{thebibliography}

\appendix
\section{\bfseries Supplementary information}
\label{suppinfo}

\noindent{\bf Probabilities of coincidence of sets of edges.}
\begin{align*}
\tau_{j,k} &= p^jq^k + (1-p)^j (1-q)^k
\\
\tau_{1,2} &= pq^2 + (1-p) (1-q)^2
\\
\tau &= pq+ (1-p)(1-q) = \tau_{1,1} 
\end{align*}

\noindent{\bf Parameters.}
\begin{align*}
\lambda &= 1/\log(1/\tau)
\\
\gamma &= \lambda \log (\tau/\tau_{1,2})
\end{align*}

\noindent{\bf The admissible region.}
\[
\Y:=\{(p,q) \in (0,1)\times (0,1):\, \max(\tau_{1,2}, \tau_{2,1}) < \tau^{3/2}\}
\]

\noindent
\begin{tabular}{cccc}
\begin{minipage}{0.23\textwidth}
\begin{figure}[H]
\centering
\includegraphics[width=\textwidth]{adm.png}
\end{figure}
\end{minipage}
&
\begin{minipage}{0.23\textwidth}
\begin{figure}[H]                                                                 
\centering
\includegraphics[width=0.9\textwidth]{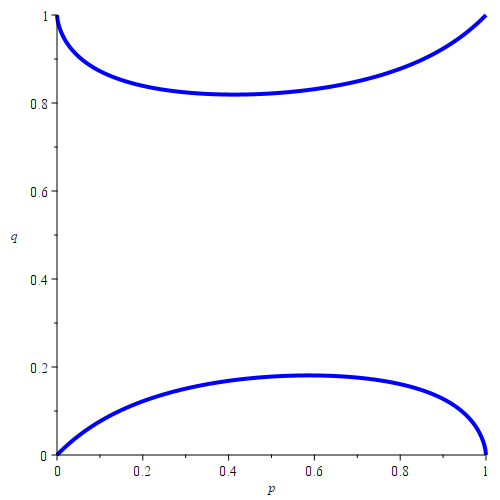}
\end{figure}
\end{minipage}
&
\begin{minipage}{0.23\textwidth}
\begin{figure}[H]                                                                 
\centering
\includegraphics[width=0.9\textwidth]{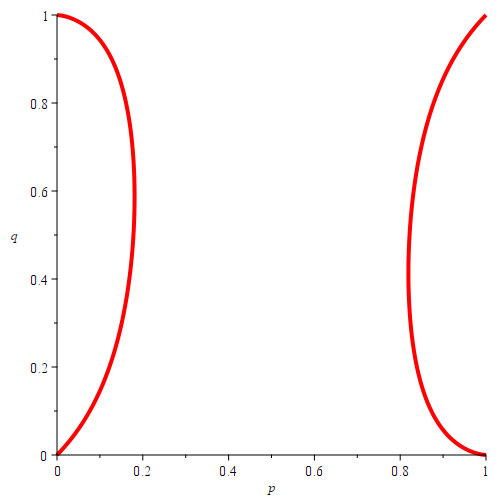}
\end{figure}
\end{minipage}
&
\begin{minipage}{0.23\textwidth}
\begin{figure}[H]                                                                 
\centering
\includegraphics[width=0.9\textwidth]{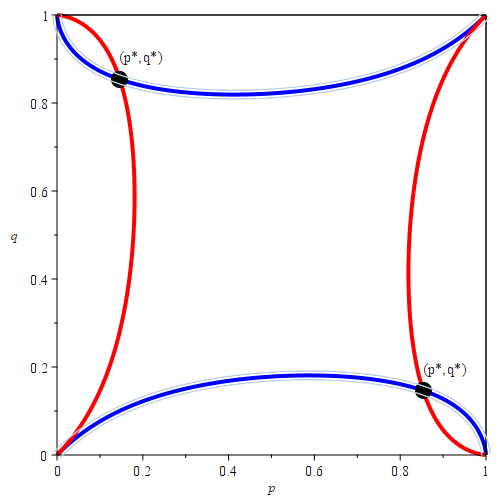}
\end{figure}
\end{minipage}
\\
The set $\Y$
&
$\tau_{1,2}(p,q)=\tau(p,q)^{3/2}$
&
$\tau_{2,1}(p,q)=\tau(p,q)^{3/2}$
&
$(p^*,q^*)$ and $(q^*,p^*)$
\end{tabular}

\noindent{\bf Corners of the admissible region.}
The point $(p^*, q^*)$ is defined as the unique solution of
$\tau_{1,2}(p,q) = \tau(p,q)^{3/2}$. We have
$p^* \approx 0.1464466094$, $q^*\approx 0.8535533906$.

\noindent{\bf Extrema of $\lambda$.}
We have
\begin{align*}
\min_{(p,q)\in [0,1]\times[0,1]} \lambda(p,q) &= \lambda(0,1)=\lambda(1,0)=0
\\
\min_{(p,q)\in \Y} \lambda(p,q) &= \lambda(p^*, q^*) = \lambda(q^*,p^*)
\approx 0.7213475205
\\
\sup_{(p,q)\in \Y} \lambda(p,q) &= \lim_{(p,q)\to (1,1)} \lambda(p,q)
= \lim_{(p,q)\to(0,0)} \lambda(p,q) = \infty.
\end{align*}

\noindent{\bf Extrema of $\gamma$}
\begin{align*}
\min_{(p,q)\in [0,1]^2} \gamma(p,q) &= 0 = \gamma(x,0) = \gamma(x,1), \quad 0<x<1
\\
\max_{(p,q)\in [0,1]^2} \gamma(p,q) &= 1 = \gamma(0,y) = \gamma(1,q), \quad 0<y<1,
\\
\gamma(x,1/2) &=1 , \quad 0 \le x \le 1
\\
\min_{(p,q)\in \Y} \gamma(p,q) &= \gamma(p^*,q^*) = \gamma(q^*,p^*) = 1/2
\\
\max_{(p,q)\in \Y} \gamma(p,q) &= \gamma(1/2, 1/2)=1
\end{align*}

\noindent{\bf The function $W$.} 
\[
W(x) := x+ 2\lambda \log x + \frac{\lambda}{x} \log(2\pi x), \quad x \ge 1.                
\]
It is strictly increasing because, for all $x>0$,
\[
W'(x) = 1+\frac{\lambda}{x}\left(2+\frac1x - \frac{\log(2\pi x)}{x}\right)                 
\ge 1+ \frac{\lambda}{x} (2-2\pi e^{-2}) \ge 1
\]
(indeed, the bracketed expression in the second term 
achieves minimum at the point $x=e^2/2\pi$ and
equals $2-2\pi{e^{-2}} > 1.1496633$ at this point).
It is strictly concave because, for all $x>0$,
\[
W''(x) = \frac{2\lambda}{x^3} \big(\log x-x + \log(\pi^2)-3/2\big) < 0
\]
(indeed $\log x-x \le -1$, for all $x>0$, whereas $\log(\pi^2)-3/2 
\approx 0.337877067 <1$); and so $W'(x)$ is a strictly decreasing function
with $W'(x)\to 1$ as $x \to \infty$.

\noindent{\bf The function $m_*$.} 
Define the strictly increasing function
\[
R(x) :=4 \lambda \log x + 2 \lambda+1, \quad x \ge 1,
\]
and notice that
\[
R(1) = 1+2\lambda > 1+\lambda \log(2\pi) = W(1).
\]
Hence, for each $x \ge 1$ there is a unique $m_*=m_*(x)$ such that
\[
W(m_*(x)) = R(x).
\]

\begin{lemma}
\label{applem}
With $m=\lfloor m_* - C_n/\log n \rfloor$ we have
\[
\frac{C_n}{\log n} < W(m_*) - W(m) 
< W'(1) \left(1+\frac{C_n}{\log n}\right)
\]
\end{lemma}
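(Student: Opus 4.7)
The plan is to apply the Mean Value Theorem to $W$ on the interval $[m, m_*]$, using the two basic properties of $W$ established in the supplementary information section: namely that $W'(x) > 1$ strictly for all $x > 0$ (because the bracketed term in the formula for $W'(x)$ is bounded below by $2 - 2\pi e^{-2} > 0$), and that $W$ is strictly concave so that $W'$ is strictly decreasing on $(0,\infty)$ with supremum $W'(1)$ on $[1,\infty)$.

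First I would extract from the definition $m = \lfloor m_* - C_n/\log n\rfloor$ the two-sided bound
\[
\frac{C_n}{\log n} \le m_* - m < \frac{C_n}{\log n} + 1,
\]
where the left inequality follows from $\lfloor y \rfloor \le y$ and the right (strict) inequality from $\lfloor y \rfloor > y - 1$. Note that for $n$ large enough we have $m \ge 1$, since $m_*(n)\to\infty$ while $C_n/\log n \to 0$, so the interval $[m, m_*]$ lies in the domain where the derivative bounds apply.

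Next I would invoke the Mean Value Theorem: there exists $\xi \in (m, m_*)$ with
\[
W(m_*) - W(m) = W'(\xi) \, (m_* - m).
\]
For the lower bound, since $W'(\xi) > 1$ strictly, and $m_*-m \ge C_n/\log n$, multiplying yields $W(m_*)-W(m) > C_n/\log n$ whenever $m_*-m>0$; the single exceptional case where $m_*-C_n/\log n$ is an integer (so $m_*-m = C_n/\log n$ exactly) is still handled because $W'(\xi) > 1$ gives a strict inequality on the product. For the upper bound, concavity of $W$ gives $W'(\xi) < W'(m) \le W'(1)$, and combined with $m_*-m < 1+C_n/\log n$ we obtain
\[
W(m_*) - W(m) < W'(1)\left(1 + \frac{C_n}{\log n}\right),
\]
as required.

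There is no real obstacle here: the only minor care needed is (i) to verify that $W'(x) > 1$ strictly rather than merely $\ge 1$, which is immediate from the explicit lower bound for $W'$ recorded above, and (ii) to handle the boundary case of the floor function cleanly so that both inequalities in the lemma are strict. Both of these are already implicit in the properties of $W$ stated earlier in the appendix, so the proof reduces to the two-line application of the Mean Value Theorem described.
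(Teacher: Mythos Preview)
Your proof is correct and follows essentially the same approach as the paper: apply the Mean Value Theorem to write $W(m_*)-W(m)=W'(\xi)(m_*-m)$, use the derivative bounds $1<W'(\xi)<W'(1)$ (from strict concavity and the explicit lower bound on $W'$), and combine with the floor-function bounds $C_n/\log n \le m_*-m < 1+C_n/\log n$. You are slightly more careful than the paper about the strictness of the inequalities, but the argument is the same.
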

\begin{proof}
Write $W(m_*) - W(m) = W'(\eta) (m_*-m)$, for some $m < \eta < m_*$.
Since $1 < W'(x) < W'(1)$ for all $x > 1$
and $C_n/\log n \le m_*-m \le 1+C_n/\log n$ the inequalities follow.
\end{proof}

\begin{corollary}
\label{coroA2}
With $m=\lfloor m_* - C_n/\log n \rfloor$,
as $n \to \infty$,
\[
\log n - \frac{m}{4\lambda} = O(\log m).
\]
\end{corollary}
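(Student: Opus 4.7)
The plan is to unfold the defining equation $W(m_*(n))=4\lambda\log n+2\lambda+1$ in terms of the explicit formula for $W$, solve for $\log n$, and then control the discrepancy between $m_*$ and $m$ using Lemma \ref{applem}.

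First I would substitute the definition $W(x)=x+2\lambda\log x+\frac{\lambda}{x}\log(2\pi x)$ into the equation $W(m_*)=4\lambda\log n+2\lambda+1$. This gives
\[
4\lambda\log n \;=\; m_* + 2\lambda\log m_* + \frac{\lambda}{m_*}\log(2\pi m_*) - 2\lambda - 1.
\]
Since $m_*\to\infty$ as $n\to\infty$ (this is clear because $R(n)\to\infty$ and $W$ is an increasing bijection onto $[W(1),\infty)$), the term $\frac{\lambda}{m_*}\log(2\pi m_*)$ is $o(1)$. Dividing through by $4\lambda$ I obtain
\[
\log n \;=\; \frac{m_*}{4\lambda} + \tfrac12\log m_* + O(1).
\]

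Next I would compare $m$ with $m_*$. By the definition $m=\lfloor m_* - C_n/\log n\rfloor$, together with the assumption $C_n/\log n\to 0$, we have $0\le m_*-m \le 1+C_n/\log n = O(1)$, so
\[
\frac{m_*}{4\lambda} \;=\; \frac{m}{4\lambda} + O(1).
\]
Substituting into the previous display yields
\[
\log n - \frac{m}{4\lambda} \;=\; \tfrac12\log m_* + O(1).
\]
Finally, since $m \sim m_*$ as $n\to\infty$ (their difference is $O(1)$ while both tend to infinity), we have $\log m_* = \log m + o(1) = O(\log m)$, and so $\log n - m/(4\lambda) = O(\log m)$, as required.

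No step is genuinely difficult; the only mild obstacle is bookkeeping — making sure that the $O(1)$ additive constants absorbed along the way truly are bounded uniformly in $n$. This is automatic, since $\lambda$ is a fixed constant depending only on $(p,q)$ and the deviations $m_*-m$ and $\frac{\lambda}{m_*}\log(2\pi m_*)$ are bounded (the second in fact tends to $0$). Note also that the conclusion is stated as $O(\log m)$ rather than $O(\log m_*)$ purely because $m$ is the quantity appearing in the applications; the two are interchangeable here.
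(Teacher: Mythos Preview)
Your proof is correct and follows essentially the same route as the paper: unfold the definition of $W$ at $m_*$ (the paper does it at $m$, using Lemma~\ref{applem} to bound $W(m_*)-W(m)=O(1)$, which is equivalent to your direct bound $m_*-m=O(1)$) and observe that $W(x)=x+O(\log x)$. The only cosmetic difference is that you pass through $m_*$ first and then convert to $m$, whereas the paper works with $W(m)$ directly.
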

\begin{proof}
Use the previous lemma together with  $W(m) = m + O(\log m)$
and $W(m_*) = R(n) = 4 \lambda \log n + 2\lambda +1$.
\end{proof}

\begin{corollary}
\label{coroA3}
With $m=\lfloor m_* - C_n/\log n \rfloor$, as $n \to \infty$,
\[
\log n > \frac{1}{4 \lambda} m + \frac{1}{2} \log m - \frac{1}{2} -\frac{1}{4 \lambda}
+ o(1).
\]
\begin{proof}
substitute $W(m_*) = R(n) = 4 \lambda \log n + 2\lambda +1$
and $W(m) = m+ 2\lambda \log m + \frac{\lambda}{m} \log(2\pi m)$ in the first
inequality of Lemma \ref{applem} to get
\[
(4 \lambda \log n +2\lambda + 1) 
-\left(m+ 2\lambda \log m + \frac{\lambda}{m} \log(2\pi m)\right) 
> \frac{C_n}{\log n}.
\]
Since $\frac{\lambda}{m} \log(2\pi m)=o(1)$ and $\frac{C_n}{\log n}=o(1)$,
the inequality follows by rearranging terms.
\end{proof}
\end{corollary}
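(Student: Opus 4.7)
The plan is to derive this inequality by direct substitution into the first inequality of Lemma~\ref{applem}, namely $W(m_*) - W(m) > C_n/\log n$. This is really just algebraic bookkeeping together with an $o(1)$ estimate, so the only substance is identifying which leftover terms vanish in the limit.

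First I would insert the two explicit expressions available: from the definition of $m_*$ we have $W(m_*) = R(n) = 4\lambda \log n + 2\lambda + 1$, and from the definition of $W$ in the Supplementary Information we have $W(m) = m + 2\lambda \log m + \frac{\lambda}{m}\log(2\pi m)$. Plugging both into the inequality of Lemma~\ref{applem} gives
\[
4\lambda \log n + 2\lambda + 1 - m - 2\lambda \log m - \tfrac{\lambda}{m}\log(2\pi m) > \tfrac{C_n}{\log n}.
\]
Dividing through by $4\lambda>0$ and isolating $\log n$ on the left yields
\[
\log n > \frac{m}{4\lambda} + \frac{1}{2}\log m - \frac{1}{2} - \frac{1}{4\lambda} + \frac{1}{4m}\log(2\pi m) + \frac{C_n}{4\lambda \log n}.
\]

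Finally, I would verify that the two rightmost terms are $o(1)$: the term $\frac{1}{4m}\log(2\pi m)$ tends to $0$ because $m \to \infty$ (which follows from $m_* \to \infty$ and $C_n/\log n \to 0$, so $m = \lfloor m_* - C_n/\log n\rfloor \to \infty$), while $\frac{C_n}{4\lambda \log n} \to 0$ by the hypothesis on $C_n$. Both terms being positive, dropping them into a single $o(1)$ on the right-hand side preserves the inequality and delivers the claim. There is no genuine obstacle here; the calculation is a one-step rearrangement whose only content is the asymptotic negligibility of the two residual terms.
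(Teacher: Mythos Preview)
Your proof is correct and follows exactly the same approach as the paper: substitute the explicit formulas for $W(m_*)$ and $W(m)$ into the first inequality of Lemma~\ref{applem}, then rearrange and absorb the two residual terms $\frac{\lambda}{m}\log(2\pi m)$ and $\frac{C_n}{\log n}$ into an $o(1)$. Your version is in fact slightly more explicit, since you justify $m\to\infty$ and note that the residual terms are positive, which cleanly preserves the inequality direction.
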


\begin{lemma}
\label{mstarapprox}
We have
\begin{equation}
\label{mstarclaim1}
\begin{split}
m_*(n) &= R(n) - 2\lambda \log R(n) + O\left(\frac{\log\log n}{\log n}\right)
\\
&=  4\lambda \log n+2\lambda+1
-2\lambda \log(4\lambda \log n+2\lambda+1) 
+ O\left(\frac{\log\log n}{\log n}\right),\, \text{ as $n \to \infty$.}
\end{split}
\end{equation}
\end{lemma}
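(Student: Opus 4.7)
The plan is a standard two-step asymptotic inversion. The defining equation $W(m_*)=R(n)$ unfolds as
\[
m_* + 2\lambda \log m_* + \frac{\lambda}{m_*}\log(2\pi m_*) = R(n),
\]
so equivalently
\begin{equation}
\label{mstarprop}
m_* = R - 2\lambda \log m_* - \frac{\lambda}{m_*}\log(2\pi m_*).
\end{equation}
Since $W$ is strictly increasing with $W(x)/x \to 1$ as $x \to \infty$, and since $R(n) \to \infty$, we get $m_* \to \infty$ and $m_*/R \to 1$; in particular $\log m_* = \log R + o(1)$ and the last term of \eqref{mstarprop} is $O(\log R / R)$.

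The hard (but not really hard) part is the bootstrapping needed to pin down the error $m_*/R - 1$. First I would insert the crude bound $m_* = R(1+o(1))$ into \eqref{mstarprop} and write $\log m_* = \log R + \log(m_*/R)$, obtaining
\[
m_* - R = -2\lambda \log R - 2\lambda \log(m_*/R) + O(\log R / R).
\]
Dividing by $R$ shows $(m_*/R) - 1 = -2\lambda \log R / R + o(\log R/R)$, so $\log(m_*/R) = O(\log R / R)$. Feeding this back into the previous display yields
\[
m_* = R - 2\lambda \log R + O(\log R / R).
\]
Since $R(n) = 4\lambda \log n + 2\lambda + 1$ satisfies $R \asymp \log n$ and $\log R \asymp \log\log n$, the error is $O(\log\log n / \log n)$, which gives the first equality of \eqref{mstarclaim1}.

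The second equality is then a purely mechanical substitution of the explicit formula $R(n) = 4\lambda \log n + 2\lambda + 1$ into $R - 2\lambda \log R$. The only subtlety worth mentioning is the uniformity of the $O(\cdot)$ constants in $(p,q)$: the constant $\lambda$ depends on $(p,q)$ but remains bounded on any compact subset of the admissible region $\Y$ (by the extremal information for $\lambda$ recorded in the supplementary section), so the asymptotics hold pointwise in $(p,q)$ without further effort.
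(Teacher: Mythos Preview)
Your proof is correct. Both arguments establish the same thing—that the candidate $\tilde m := R - 2\lambda \log R$ differs from $m_*$ by $O(\log R/R)$—but they are organized differently. The paper plugs $\tilde m$ directly into $W$, computes $W(\tilde m)-W(m_*)=W(\tilde m)-R$ explicitly (this is a two-line calculation), and then invokes the mean value theorem together with the bound $W'\ge 1$ on $[1,\infty)$ to transfer the $O(\log\log n/\log n)$ estimate from $W(\tilde m)-W(m_*)$ back to $\tilde m-m_*$. Your route is the standard bootstrap: rewrite $W(m_*)=R$ as a fixed-point relation for $m_*$, feed in the crude $m_*\sim R$, extract $m_*/R-1=O(\log R/R)$, and iterate once. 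The paper's version is marginally slicker because the mean value step replaces your two-stage substitution by a single evaluation, while your version is perhaps more transparent about where the error term originates. Your closing remark on uniformity in $(p,q)$ is correct but goes beyond what the lemma asserts; the paper treats $(p,q)$ as fixed throughout.
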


\begin{proof}
Set 
\[
\tilde m(n) = R(n) - 2\lambda \log R(n).
\]
Claim \eqref{mstarclaim1} is equivalent to 
\begin{equation}
\label{mstarclaim2}
m_* = \tilde m + O\left(\frac{\log\log n}{\log n}\right).
\end{equation}
Since
\[
W(m_*) - W(\tilde m) = W'(\xi) (m^*-\tilde m)
\]
for some $\xi$ between $\tilde m$ and $m_*$,
and since $W'$ is bounded on the interval $[1,\infty)$,
claim \eqref{mstarclaim2}
is equivalent to
\begin{equation}
\label{mstarclaim3}
W(\tilde m)-W(m_*)  = O\left(\frac{\log\log n}{\log n}\right).
\end{equation}
This is easy because, by direct computation,
\begin{align*}
W(\tilde m)- W(m_*) &= W(\tilde m) - R
\\
&= \tilde m -R  + 2\lambda \log \tilde m  
+ \frac{\lambda}{\tilde m } \log(2\pi \tilde m )
\\
&= -2\lambda \log R + 2\lambda \log \tilde m 
+ \frac{\lambda}{\tilde m } \log(2\pi \tilde m )
\\
&= 2\lambda \log(\tilde m/R) + \frac{\lambda}{\tilde m } \log(2\pi \tilde m )
= O\left(\frac{\log\log n}{\log n}\right).
\end{align*}
The latter follows from the definitions of $\tilde m$ as a function of $R$
and the definition of $R$ as a function of $n$.
\end{proof}

\end{document}